\newtheorem{lemma}{Lemma}[section]
\newtheorem{thm}[lemma]{Theorem}
\newtheorem{rem}[lemma]{Remark}
\newtheorem{prop}[lemma]{Proposition}
\newtheorem{cor}[lemma]{Corollary}
\newtheorem{conj}[lemma]{Conjecture}
\newtheorem{example}[lemma]{Example}
\newtheorem{defn}[lemma]{Definition}
\def\H{{\mathbb{H}}}
\def\C{{\mathbb{C}}}
\def\Q{{\mathbb{Q}}}
\def\bd{{\partial}}
\newcommand\matT{{\mathbb{T}}}
\newcommand\matP{{\mathbb{P}}}
\newcommand\matZ{{\mathbb{Z}}}
\newcommand\matK{{\mathbb{K}}}
\newfont{\Got}{eufm10 scaled 1200}
\newcommand{\mycap}[1]{\caption{{#1}}}
\newcommand{\calT}{{\mathcal T}}
\title{Hyperbolic graphs of small complexity}
\author[Heard]{Damian Heard}
\address{Department of Mathematics and Statistics,
University of Melbourne, Parkville, Victoria 3010, Australia}
\curraddr{RedTribe, Level 10, 50 Market St, Melbourne, Victoria 3000, Australia}
\email{damian.heard at gmail.com}
\author[Hodgson]{Craig Hodgson}
\address{Department of Mathematics and Statistics,
University of Melbourne, Parkville, Victoria 3010, Australia}
\email{cdh at ms.unimelb.edu.au}
\thanks{The research of the first two authors was partially
supported by the ARC grant DP0663399; that of the last two authors
by the INTAS project ``CalcoMet-GT'' 03-51-3663.}
\author[Martelli]{Bruno Martelli}
\address{Dipartimento di Matematica ``Tonelli'', Largo Pontecorvo 5, 56127 Pisa, Italy}
\email{martelli at dm.unipi.it}
\author[Petronio]{Carlo Petronio}
\address{Dipartimento di Matematica Applicata ``Dini'', Via Buonarroti 1/C, 56127 Pisa, Italy}
\email{petronio at dm.unipi.it}
\begin{document}

\begin{abstract}
\noindent
In this paper we enumerate and classify the ``simplest''
pairs $(M,G)$ where $M$ is a closed orientable $3$-manifold and $G$
is a trivalent graph embedded in $M$.

To enumerate the pairs we use a variation of Matveev's definition of
complexity for $3$-manifolds, and we consider only
$(0,1,2)$-irreducible pairs, namely pairs $(M,G)$ such that any
2-sphere in $M$ intersecting $G$ transversely in at most $2$ points bounds a ball
in $M$ either disjoint from $G$ or intersecting $G$ in an unknotted
arc. To classify the pairs our main tools are geometric invariants
defined using hyperbolic geometry. In most cases, the graph
complement admits a unique hyperbolic structure {\em with parabolic
meridians}; this structure was computed and studied using Heard's
program {\em Orb} and Goodman's program {\em Snap}.

We determine all $(0,1,2)$-irreducible pairs up to complexity 5,
allowing disconnected graphs but forbidding components without
vertices in complexity 5. The result is a list of 129 pairs, of
which 123 are hyperbolic with parabolic meridians. For these pairs
we give detailed information on hyperbolic invariants including
volumes, symmetry groups and arithmetic invariants.   Pictures of
all hyperbolic graphs up to complexity 4 are provided.  We also
include a partial analysis of knots and links.

The theoretical framework underlying the paper is twofold, being based on
Matveev's theory of spines and on Thurston's idea (later developed by
several authors) of constructing hyperbolic structures via triangulations.
Many of our results were obtained (or suggested) by computer investigations.
\end{abstract}

\subjclass[2000]{Primary 57M50;  Secondary 57M27, 05C30, 57M20.}

\maketitle

\section{Introduction}

The study of {\em knotted graphs}  in $3$-manifolds is a natural
generalization of classical knot theory, with potential applications
to chemistry and biology (see \emph{e.g.}~\cite{Fla}). In knot
theory, extensive knot tables have been built up through the work of
many mathematicians (see \emph{e.g.} Conway~\cite{Con} and
Hoste--Thistlethwaite--Weeks~\cite{HTW}). There has been much less
work on the tabulation of knotted graphs,  but some knotted graphs
in $S^3$ have been enumerated in order of crossing number by
Simon~\cite{Si}, Litherland~\cite{Li}, Moriuchi~\cite{Mori,Mori1}, and
Chiodo~et.~al.~\cite{CHHSS}.

In this paper we classify the simplest
trivalent graphs in general closed $3$-manifolds. We first enumerate them
using a notion of complexity which extends Matveev's definition for
$3$-manifolds~\cite{Matv:AAM}, and then we classify them with
the help of geometric invariants, mostly defined using hyperbolic geometry.

More precisely, the objects considered in this paper are pairs
$(M,G)$ where $M$ is a closed, connected  orientable $3$-manifold
and $G$ is a trivalent graph in $M$. The graph $G$ may contain loops
and multiple edges, and is possibly disconnected (in particular, $G$ can be a knot or a link). To avoid ``wild'' embeddings we work in the piecewise linear
category: thus $M$ is a PL-manifold and $G$ is a 1-dimensional
subcomplex, and we aim to classify graphs up to PL-homeomorphisms of
pairs.

\bigskip

Following~\cite{Matv:AAM}, a compact polyhedron $P$ is called \emph{simple}
if the link of every point of $P$ embeds in the 1-skeleton of the tetrahedron
(the complete graph with 4 vertices). Points having the whole of this graph
as a link are called \emph{vertices} of $P$. Moreover,
as defined in~\cite{orb:compl}, $P$ is a
\emph{spine} of a pair $(M,G)$ if it embeds in $M$ so that its
complement is a finite union of
balls intersecting $G$ in the simplest possible ways, as shown in
Figure~\ref{balls:fig}.
    \begin{figure}
    \begin{center}
    \includegraphics[scale=1.0]{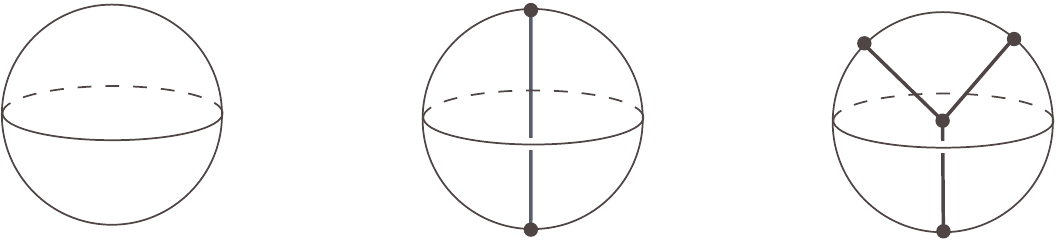}
    \mycap{\label{balls:fig} Balls in the complement of a spine.}
    \end{center}
    \end{figure}
As usual in complexity theory, the complexity $c(M,G)$ is then defined
as the minimal number of vertices in a simple spine of $(M,G)$.
The case considered in~\cite{orb:compl} is actually that
of 3-orbifolds, but the definition of complexity is the same as
just given, except that a contribution of the edge labels is also introduced.
When $G=\emptyset$ we recover the original definition of
Matveev, thus getting the equality $c(M) = c(M,\emptyset)$. In
general, we have $c(M)\leqslant c(M,G)$.

For manifolds, Matveev showed that complexity is additive under connected
sum and that it behaves particularly well on \emph{irreducible}
manifolds (\emph{i.e.}~manifolds in which every 2-sphere bounds a 3-ball).
In particular, there exist only finitely many irreducible manifolds
with given complexity. These facts extend to the context of the pairs $(M,G)$ described above, with the following notion of irreducibility:
$(M,G)$ is  \emph{$(0,1,2)$-irreducible} if
every 2-sphere embedded in $M$ and meeting $G$ transversely in at
most two points bounds a ball intersecting $G$ as in
Figure~\ref{balls:fig}, left or centre (in particular, there exists
no 2-sphere meeting $G$ in one point).

\bigskip

This paper is devoted to the enumeration and the geometric investigation of
all $(0,1,2)$-irreducible graphs $(M,G)$ of small
complexity. As usual in $3$-dimensional topology, a key role in
the study of our graphs is played by invariants coming from hyperbolic
geometry, which in particular provided the tools we used
in most cases to distinguish the pairs
from each other.

While the complement of $G$ in $M$ very often has no hyperbolic structure
with geodesic boundary (for instance, it is often a handlebody),
most pairs $(M,G)$ are indeed hyperbolic in a more general sense, namely
they are \emph{hyperbolic with parabolic meridians}. This means that
$M\setminus G$ carries a metric of constant sectional curvature $-1$ which
completes to a manifold with non-compact geodesic boundary having:
\begin{itemize}
\item toric cusps at the knot components of $G$,
\item annular cusps at the meridians of the edges of $G$, and
\item geodesic 3-punctured boundary spheres at the vertices of $G$.
\end{itemize}
This hyperbolic structure is the natural analogue of the complete
hyperbolic structure on a knot or link complement and is also
useful when studying orbifold structures on $(M,G)$.

By Mostow-Prasad rigidity, a hyperbolic structure with parabolic meridians
is unique if it exists,
so its geometric invariants only depend on $(M,G)$.
One can therefore use the volume and Kojima's canonical decomposition~\cite{Koj1,Koj2}
to distinguish hyperbolic graphs. For the pairs in our list we have
constructed and analyzed the hyperbolic structure
using the computer program \emph{Orb},
written by the first named author~\cite{orb}.

Since knots and links have already been widely studied in many contexts,
this paper focuses mostly on graphs containing vertices.

\subsection*{Number of hyperbolic graphs}
Table~\ref{hyp:num:tab} gives a summary of our results. Up to
complexity $4$ our census of hyperbolic graphs $(M,G)$ is complete
and contains $45$ elements, consisting of 5 knots, 24
$\theta$-graphs, 13 handcuffs, and 3 distinct connected graphs
with four vertices. The graph types occurring are shown in Figure~\ref{g:types:fig}.
In complexity 5 we decided to rule out
knot components, and we found 78 more hyperbolic graphs. Out
of our 123 graphs, 36 lie in $S^3$.

\begin{table}
\begin{center}
\begin{tabular}{l||r|r|r|r|r}
type & $c=1$ & $c=2$ & $c=3$ & $c=4$ & $c=5$  \\ \hline\hline
knot (in $S^3$) & 0 (0) & 0 (0) & 1 (1) & 4 (1) & -- (--) \\ \hline
$2t$ (in $S^3$) & 0 (0) & 2 (1) & 4 (1) & 18 (4) & 49 (10) \\ \hline
$2h$ (in $S^3$) & 1 (1) & 1 (0) & 3 (2) & 8 (2) & 27 (8) \\ \hline
$4a$ (in $S^3$) & 0 (0) & 1 (1) & 0 (0) & 0 (0) & 2 (2) \\ \hline
$4b$ (in $S^3$) & 0 (0) & 0 (0) & 0 (0) & 1 (1) & 0 (0) \\ \hline
$4c$ (in $S^3$) & 0 (0) & 0 (0) & 0 (0) & 1 (1) & 0 (0) \\
\end{tabular}
\end{center}
\mycap{\textbf{Numbers of hyperbolic graphs.}  When $c=5$ we have not
investigated graphs having knot components. The other graph types not mentioned
were all investigated and found to have no representative.\label{hyp:num:tab}}
\end{table}

    \begin{figure}
    \begin{center}
    \includegraphics[scale=1.0]{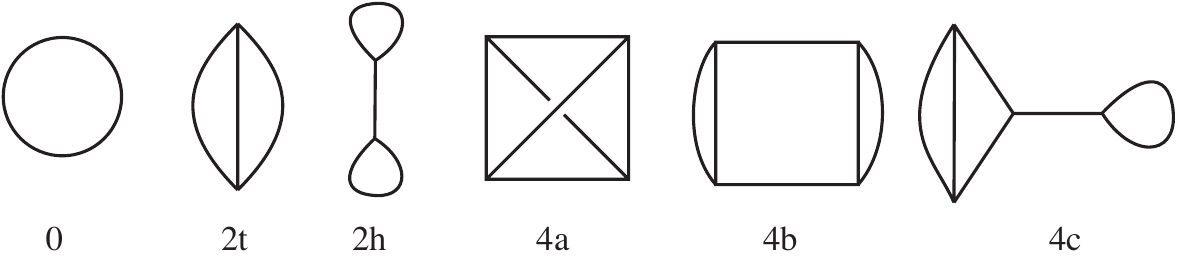}
    \mycap{Names of abstract graph types.\label{g:types:fig}}
    \end{center}
    \end{figure}

Detailed information on all the 123 hyperbolic graphs up to
complexity 5, including the volume and a description of the
canonical decomposition, will be given in
Section~\ref{res:section}, while pictures of graphs up to
complexity 4 will be shown in Section~\ref{fig:section}.

\subsection*{Complexity and volume}
As shown in
Table~\ref{hyp:num:tab}, there is a single hyperbolic graph of
smallest complexity $c=1$. It is a handcuff graph in $S^3$, described in
Figure~\ref{handcuffs} and Example~\ref{handcuff:example}. It is
also the hyperbolic graph with vertices of least volume
3.663862377... This fact confirms the following relationships
between complexity and hyperbolic geometry, which have  already been
verified for closed manifolds~\cite{Matv:AAM, MOM2}, 
cusped manifolds~\cite{CaHiWe, CaMe, MOM1,MOM2}, 
and manifolds with arbitrary (geodesic) boundary~\cite{Fu, KoMi, Mi, FriMaPe}:

\begin{enumerate}
\item
Objects having complexity zero are not hyperbolic.
\item
Among hyperbolic ones, the objects having lowest volume have the lowest complexity.
\end{enumerate}
Note that complexity and volume may share the same first segments of
hyperbolic objects (as they do) but are qualitatively different
globally, because in general there are finitely many hyperbolic
objects of bounded complexity, while infinitely many ones may have
bounded volume thanks to Dehn surgery.

\subsection*{Compact totally geodesic boundary}
It may happen that $M\setminus G$ has a hyperbolic metric which
completes to a manifold with \emph{compact} totally geodesic
boundary. In this case we say that $(M,G)$ is
\emph{hyperbolic with geodesic boundary}, which implies that
$(M,G)$ is also hyperbolic (with parabolic meridians), but as mentioned
above the converse is often false. By analyzing the graphs in
Table~\ref{hyp:num:tab}, we have established the following:

\begin{prop}\label{cpt:bd:prop}
Up to complexity $5$ there exist
$3$ graphs $(M,G)$ which are hyperbolic with geodesic boundary, shown in
Figure~\ref{geod_bound:fig}. They
all belong to the set of $8$ minimal-volume such manifolds
described by Kojima--Miyamoto~\cite{KoMi} and Fujii~\cite{Fu}, 
and they include Thurston's knotted $Y$~\cite{Th:book}.
\end{prop}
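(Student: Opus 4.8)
The plan is to use the classification of Section~\ref{res:section}, which supplies all $123$ pairs that are hyperbolic with parabolic meridians together with their hyperbolic structures as computed by \emph{Orb}~\cite{orb}, and to decide for each of them whether the graph exterior $N=M\setminus\mathrm{int}\,\mathcal{N}(G)$ moreover carries a complete hyperbolic metric with \emph{compact} totally geodesic boundary. First I would make two easy reductions. A compact totally geodesic boundary component is a closed hyperbolic surface, hence has negative Euler characteristic and is $\pi_1$-injective in $N$: so a pair that is hyperbolic with geodesic boundary cannot have knot or link components (these contribute torus boundary components to $N$), and the boundary $\partial N$ must be incompressible, which rules out every pair whose exterior is a handlebody or a compression body. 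Reading off a minimal spine, one recognises such exteriors at once, and this already disposes of the five knots of Table~\ref{hyp:num:tab} and of the majority of the remaining pairs.

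For each surviving candidate I would then try to build the structure on $N$ directly. Feeding $N$ to \emph{Orb}, which solves Thurston-type gluing equations on triangulations by (partially) truncated tetrahedra, produces a hyperbolic structure with compact geodesic boundary for exactly three pairs; for these three the solution (shape field, completeness, positivity of all tetrahedra) can be certified exactly with \emph{Snap}. Equivalently, one verifies for each survivor that $N$ is irreducible, boundary-irreducible, atoroidal and acylindrical and appeals to Thurston's hyperbolisation theorem for Haken manifolds with nonempty boundary~\cite{Th:book}; the content is that these topological conditions are satisfied by precisely three of the pairs.

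Next I would identify the three resulting manifolds with boundary inside the list of Kojima--Miyamoto~\cite{KoMi} and Fujii~\cite{Fu}: there are exactly eight orientable hyperbolic $3$-manifolds with totally geodesic boundary that decompose into at most two hyperbolic truncated tetrahedra, these are the eight of smallest volume, and the two of least volume $\approx 6.452$ are each glued from two regular truncated tetrahedra. Computing, with \emph{Orb} and \emph{Snap}, the volume and the Kojima canonical decomposition~\cite{Koj1,Koj2} of each of our three manifolds and matching it against the eight known decompositions pins each one down, since the canonical decomposition is a complete topological invariant. One of the three is Thurston's knotted $Y$~\cite{Th:book}: its canonical decomposition is precisely the gluing of two regular truncated tetrahedra described in Thurston's notes, and its volume is the minimal one.

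I expect the main obstacle to be completeness of the negative part of the argument --- certifying that none of the remaining pairs with vertices is hyperbolic with geodesic boundary. For each such pair one must exhibit an obstruction (an essential sphere, annulus or torus in $N$, or, most often, a compressing disc for $\partial N$) and read it off from the combinatorics of a minimal spine; handling all these cases uniformly, and ruling out the possibility that a small spine of $(M,G)$ conceals a more complicated but still geodesic-boundary exterior, is where the real work lies. A subsidiary point is that the positive conclusions rest on computer computations of hyperbolic structures; this is handled by re-deriving, for the three relevant manifolds only, the gluing solution and the canonical cell decomposition over an explicit number field, so that the identification with the Kojima--Miyamoto--Fujii manifolds is rigorous.
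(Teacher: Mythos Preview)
Your overall strategy---construct the three positive structures with \emph{Orb}/\emph{Snap} and then rule out the remaining pairs---matches the paper's, and the positive half (building the three structures, computing their volumes and canonical decompositions, and matching them against the Kojima--Miyamoto--Fujii list) is essentially what the paper does. One factual correction: all eight Kojima--Miyamoto--Fujii manifolds share the \emph{same} minimal volume $\approx 6.452$ (each is a gluing of two regular truncated tetrahedra), not just two of them; so volume alone does not isolate our three among the eight, and one uses the canonical decompositions or symmetry groups, as you also suggest.

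The substantive divergence is in the negative half. You propose to exhibit, pair by pair, a topological obstruction in the exterior $X$ (a compressing disc for $\partial X$, an essential annulus or torus), and you rightly flag this as the laborious step. The paper bypasses this entirely via complexity. By Proposition~\ref{complement:prop}, deleting from a minimal special spine $P$ of $(M,G)$ the discs of $P\setminus S(P)$ that meet $G$ yields a simple spine of $X$ with strictly fewer vertices than $P$; one then merely counts the surviving vertices. For all but two of the remaining pairs this count is at most $1$, so $c(X)\leqslant 1$, and Remark~\ref{complement:rem} (a finite-volume hyperbolic manifold with geodesic boundary has complexity at least $2$) finishes them off. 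For the last two pairs the resulting spine has $2$ vertices but is not dual to a triangulation; since a minimal simple spine of a hyperbolic manifold with geodesic boundary is always special~\cite{Matv:book}, these are excluded as well. Thus the entire negative argument collapses to a vertex count on spines, with no need to locate explicit essential surfaces. Your approach would work in principle, but it trades a single uniform complexity bound for roughly $120$ individual topological analyses.
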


    \begin{figure}[h]
    \begin{center}
    \includegraphics[]{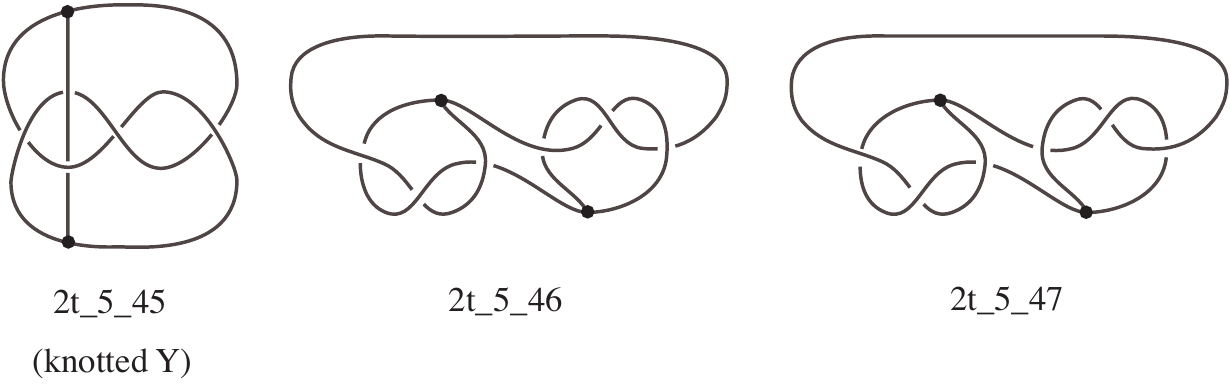}
    \mycap{\label{geod_bound:fig} Graphs whose complements admit a hyperbolic structure with geodesic boundary.}
    \end{center}
    \end{figure}

\subsection*{Non-hyperbolic graphs}
The (0,1,2)-irreducible graphs of complexity 0 were detected by
theoretical means, see Section~\ref{compl:section}. There are 3
knots (cores of Heegaard tori in $S^3$, $L(3,1)$ and $\matP^3$)
and the trivial $\theta$-graph in $S^3$, and they are all
non-hyperbolic. In complexity $c=1,2$ we have classified all
(0,1,2)-irreducible non-hyperbolic graphs, finding only 16 knots
and two links. The same phenomenon happens for $c=3,4$, where we
have shown that only knots and links are (0,1,2)-irreducible and
non-hyperbolic. However we refrained from classifying them
completely, confining ourselves to those in $S^3$ with $c=3$.
Since our primary interest was in hyperbolic graphs, we
decided to rule out knot components in complexity $5$, but quite
interestingly we have found some non-hyperbolic examples in this
case. Our results are summarized by Table~\ref{non:num:tab} and
the next statement:

\begin{prop}\label{non:sum:prop}
The only $(0,1,2)$-irreducible non-hyperbolic graphs $(M,G)$ with
$c(M,G)\leqslant 5$ such that $G$
has no knot component are the trivial $\theta$-graph in $S^3$, which has complexity $0$,
and five pairs in complexity $5$, where $G$ is a $\theta$-graph and $M\setminus G$
contains an embedded Klein bottle.
\end{prop}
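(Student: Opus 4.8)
The plan is to reduce Proposition~\ref{non:sum:prop} to a finite verification carried out over the complexity-$\leqslant 5$ census, which (by the finiteness of $(0,1,2)$-irreducible pairs of bounded complexity, as in Matveev's theory) is explicitly available. First I would recall from the enumeration part of the paper the complete list of $(0,1,2)$-irreducible pairs $(M,G)$ with $c(M,G)\leqslant 5$ and $G$ having no knot component; by Table~\ref{hyp:num:tab} and the surrounding discussion, $123$ of these are hyperbolic with parabolic meridians, leaving only finitely many candidate non-hyperbolic pairs. For complexity $0$ the non-hyperbolic ones were found by hand in Section~\ref{compl:section}, and among those without knot components the only pair is the trivial $\theta$-graph in $S^3$. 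For complexities $1$ through $4$ one invokes the classification already asserted in the excerpt: every $(0,1,2)$-irreducible non-hyperbolic graph of complexity $\leqslant 4$ is a knot or a link, hence contributes nothing once knot (and link) components are forbidden. So the entire content of the proposition lies in complexity exactly $5$: one must isolate, inside the complexity-$5$ list with no knot components, precisely those pairs that fail to be hyperbolic with parabolic meridians, and show there are exactly five of them, each a $\theta$-graph whose complement contains an embedded Klein bottle.

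The main step, then, is to go through the complexity-$5$ enumeration and, for each pair that is \emph{not} on the list of $123$ hyperbolic graphs, decide its geometry. For each such $(M,G)$ I would attempt to produce a hyperbolic structure with parabolic meridians on $M\setminus G$ using the triangulation coming from its minimal spine, feeding it to \emph{Orb}; when the gluing-equation / volume-maximization procedure fails to converge to a genuine structure, this signals (and, together with a topological certificate, proves) non-hyperbolicity. The topological certificate in these five cases is the explicit embedded Klein bottle in $M\setminus G$: exhibiting it directly from the spine shows that $M\setminus G$ contains an essential non-orientable surface, which is incompatible with admitting a hyperbolic structure with geodesic boundary (a Klein bottle cannot be $\pi_1$-injective in a hyperbolic $3$-manifold, and a boundary-parallel or compressible Klein bottle would contradict $(0,1,2)$-irreducibility or could be handled by a short normal-surface argument). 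One then checks by hand that in each of the five cases $G$ is a $\theta$-graph — the graph type is read off directly from the spine or from the combinatorics of the complementary balls in Figure~\ref{balls:fig} — and that no \emph{other} complexity-$5$ pair without knot components fails to be hyperbolic, which is exactly the statement that the hyperbolic census of size $123$ accounts for all the rest.

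The hard part will be the exhaustiveness claim at complexity $5$: one must be sure that every $(0,1,2)$-irreducible pair with $c=5$ and no knot component has actually been produced by the spine enumeration, and that \emph{Orb}'s failure to find a hyperbolic structure on the six non-census pairs is not a numerical artifact but reflects genuine non-hyperbolicity. The first point is handled by the enumeration machinery of Section~\ref{compl:section} (the same finiteness and listing argument that underlies Table~\ref{hyp:num:tab}); the second is handled pair-by-pair by converting the computer's near-degenerate output into an honest topological decomposition of $M\setminus G$ — in these five cases, a decomposition along the Klein bottle into pieces (e.g.\ a twisted $I$-bundle over the Klein bottle together with a Seifert-fibered or graph piece) that manifestly carries no hyperbolic metric, and in the remaining sporadic case a similar ad hoc identification. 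Once each of the finitely many non-census complexity-$5$ candidates has been matched either with a hyperbolic structure (contradiction, so it is on the list) or with such an explicit non-hyperbolic topological model, the count ``five $\theta$-graphs with an embedded Klein bottle, and nothing else besides the trivial $\theta$ in complexity $0$'' follows, completing the proof.
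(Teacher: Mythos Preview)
Your overall architecture matches the paper's: enumerate candidate triangulations up to complexity $5$, run \emph{Orb}, and then give a topological certificate of non-hyperbolicity (an embedded Klein bottle, hence failure of atoroidality) for the handful of complexity-$5$ pairs on which \emph{Orb} fails. Two points need correction, however.

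First, a factual slip: there is no ``sixth'' or ``remaining sporadic case'' at complexity $5$. The paper reports exactly five pairs on which \emph{Orb} fails (with $G$ having no knot component), and all five admit the \emph{same} description: the twisted $I$-bundle $\matK$ over the Klein bottle glued along its torus boundary to the solid torus $(\matT,\theta)$ of Figure~\ref{theta_in_torus}, with $M$ one of $S^2\times S^1$, $S^3/Q_8$, $L(8,3)$, $L(4,1)$, $\mathbb{RP}^3\#\mathbb{RP}^3$. No ad hoc extra case arises.

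Second, and more substantively, you treat ``the complete list of $(0,1,2)$-irreducible pairs with $c\leqslant 5$'' as given input, but that list is precisely what Proposition~\ref{non:sum:prop} (together with the hyperbolic census) is meant to establish. The enumeration of Section~\ref{compl:section} produces \emph{candidate} efficient triangulations; Theorem~\ref{good:min:spin:thm} guarantees that every $(0,1,2)$-irreducible pair of complexity $n>0$ appears among them, but not conversely. When \emph{Orb} succeeds, hyperbolicity itself certifies $(0,1,2)$-irreducibility (Theorem~\ref{hyp_par_irred}). When \emph{Orb} fails, you must still prove that the pair is $(0,1,2)$-irreducible --- otherwise it could simply be a reducible pair that slipped through the non-minimality filters, and the proposition would be claiming too much. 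Your proposal does not address this. The paper fills this gap via the explicit decomposition $M=\matK\cup_{\partial}\matT$ and an innermost-disc argument: any sphere $S$ meeting $G$ in at most two points can be isotoped off $\partial\matT$ because $\matK$ has no compressing discs and every compressing disc of $\matT$ meets $\theta$ at least twice; then $S$ lies in $\matK$ (irreducible) or in $(\matT,\theta)$ (itself $(0,1,2)$-irreducible, indeed hyperbolic with parabolic meridians), so $S$ bounds a trivial ball. You should add this step.
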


\begin{table}[h]
\begin{center}
\begin{tabular}{l||r|r|r|r|r|r}
type & $c=0$ & $c=1$ & $c=2$ & $c=3$ & $c=4$ & $c=5$  \\ \hline\hline
knot (in $S^3$) & 3 (1) & 4 (1) & 12 (1) & -- (4) & -- (--) & -- (--) \\ \hline
2-link (in $S^3$) & 0 (0) & 1 (1) & 1 (0) & -- (1) & -- (--) & -- (--) \\ \hline
2t (in $S^3$) & 1 (1) & 0 (0) & 0 (0) & 0 (0) & 0 (0) & 5 (0) \\
\end{tabular}
\end{center}
\mycap{\textbf{Numbers of (0,1,2)-irreducible non-hyperbolic graphs.}
When $c=5$ we have not investigated graphs having knot components;
-- indicates that graphs of this type were not classified.
The graph types not mentioned were all
investigated and found to have no representative.
\label{non:num:tab}}
\end{table}

A precise description of the knots, links and graphs appearing in
Table~\ref{non:num:tab} will be provided in
Section~\ref{non:section}.

\subsection*{Some open problems}
We conclude this introduction by suggesting a few problems for further investigation.
\begin{enumerate}
\item Enumerate the first few hyperbolic graphs with parabolic
meridians in order of increasing hyperbolic volume.
\item Enumerate the first few hyperbolic $3$-manifolds of finite volume
with (compact or non-compact) geodesic boundary in order of
increasing hyperbolic volume.
\item Enumerate the first few closed hyperbolic 3-orbifolds in order of increasing complexity
as defined in~\cite{orb:compl}.
\item Enumerate the first few closed hyperbolic 3-orbifolds in order of increasing hyperbolic volume.
\item Determine the exact complexity of infinite families of knotted graphs, for example
the torus knots in lens spaces (see Conjecture~\ref{torus:conj} below).
\end{enumerate}

Note that Kojima and Miyamoto~\cite{KoMi,Mi} have already identified the
lowest volume hyperbolic $3$-manifolds with compact and non-compact
geodesic boundary. Perhaps the ``Mom technology'' introduced by Gabai, Meyerhoff 
and Milley \cite{MOM1, MOM2} may offer an approach to (1) and (2). Recent work of Martin with Gehring and Marshall \cite{GM, MM} has identified the lowest volume orientable hyperbolic 3-orbifold. 

\section{Hyperbolic geometry}\label{hyp:section}

In this section we review the main geometric notions and results
we will need in the rest of the paper.

\subsection{Hyperbolic structures with parabolic meridians} \label{parabolic:subsection}

To help classify knotted graphs, we will study hyperbolic structures
analogous to the compete hyperbolic structure on the complement of a
knot or link. Given a graph $G$ in a closed orientable $3$-manifold $M$, let $N$
be the manifold obtained from $M\setminus G$ by removing an open
regular neighbourhood of the vertex set of $G$. Thus $N$ is a
non-compact $3$-manifold with boundary consisting of 3-punctured
spheres, one corresponding to each vertex of $G$. Then we say that
$(M,G)$ has a {\em hyperbolic structure with parabolic meridians} if
$N$ admits a complete hyperbolic metric of finite volume with
geodesic boundary (with toric and annular cusps). Equivalently, the
double $D(N)$ of $N$ admits a complete hyperbolic metric of finite
volume (with toric cusps). Such a hyperbolic structure on $N$ is
unique by a standard argument using Mostow-Prasad
rigidity~\cite{bible} and Tollefson's classification~\cite{tollef} of involutions with
2-dimensional fixed point set (see~\cite{Th2} and also~\cite{FP}).

\begin{example}\label{handcuff:example}
\emph{The simplest hyperbolic handcuff graph $(S^3,G)$ can be
obtained from one tetrahedron with the two front faces folded
together and the two back faces folded together giving a
triangulation of $S^3$ with the graph $G$ contained in the
1-skeleton as shown in Figure~\ref{handcuffs}.}
   \begin{figure}[h]
    \begin{center}
    \includegraphics[scale=0.65]{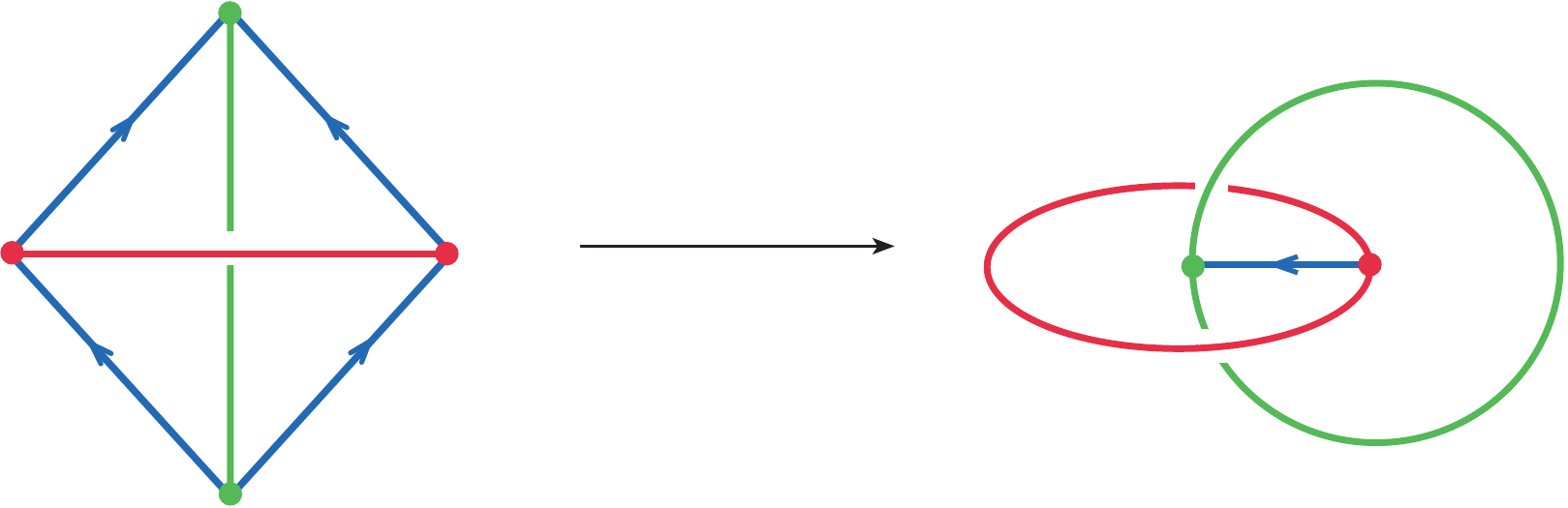}
    \mycap{\label{handcuffs}The simplest hyperbolic handcuff graph.}
    \end{center}
    \end{figure}

\emph{If we truncate the vertices of the tetrahedron until all edge
lengths are zero, the result can be realized geometrically by a
regular ideal octahedron in hyperbolic space, as shown in
Figure~\ref{oct}. We can then glue the 4
unshaded faces together in pairs so that the other 4 shaded faces form
two totally geodesic 3-punctured spheres.} 
   \begin{figure}[h]
    \begin{center}
     \includegraphics[scale=0.60]{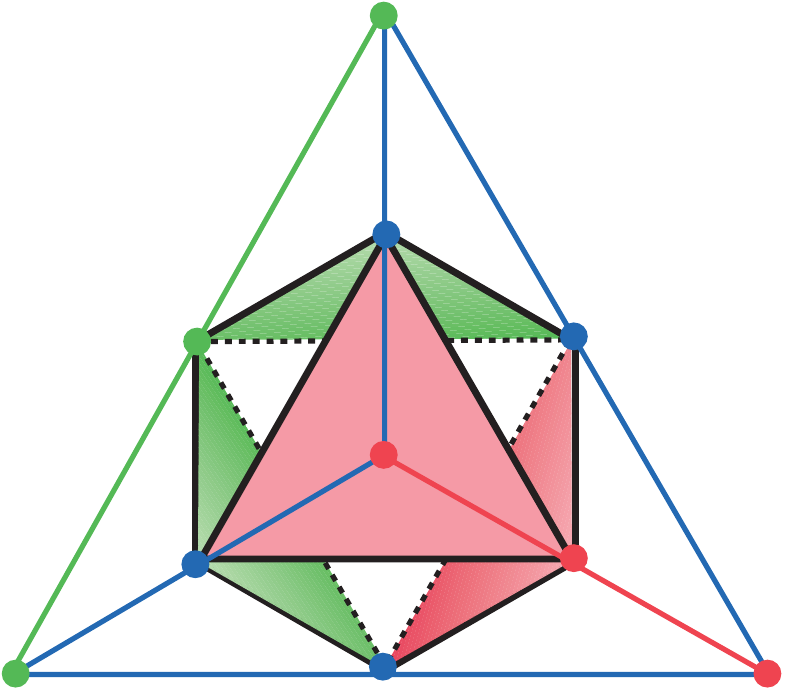}
    \mycap{\label{oct}
    Truncating the vertices of a tetrahedron produces a regular ideal octahedron
    whose unshaded faces can be glued in pairs to give a hyperbolic structure with parabolic 
    meridians on the graph of Figure~\ref{handcuffs}.}
    \end{center}
    \end{figure}

\emph{This gives a hyperbolic structure with parabolic
meridians for $(S^3,G)$ with hyperbolic volume 3.663862377... 
The work of Miyamoto and Kojima~\cite{KoMi,Mi} shows that this
is the {\em smallest} volume for trivalent graphs.
Their work also implies that a trivalent graph having this volume is obtained by
identifying the unshaded faces of an ideal octahedron as above, and
hence has complexity $1$. Therefore the handcuff graph in
Figure~\ref{handcuffs} is the unique graph of minimal volume.}
\end{example}

We next describe topological conditions for the existence of a
hyperbolic structure with parabolic meridians. Let $X$ denote
the graph exterior, \emph{i.e.}, the compact manifold obtained from $M$ by
removing an open regular neighbourhood of the graph $G$.  Then
$\bd X$ is a disjoint union of pairs of pants (corresponding to the
vertices of $G$) and  a collection of annuli and tori $P \subset
X$ (corresponding to the edges and knots in $G$). Thurston's
hyperbolization theorem for pared $3$-manifolds
\cite{Mor1, Kap} implies the following:
\begin{thm}\label{hyp_par_merid}
$(M,G)$ admits a hyperbolic structure with parabolic meridians if and only if
\begin{itemize}
\item $X$ is irreducible and homotopically atoroidal,
\item $P$ consists of incompressible annuli and tori,
\item there is no essential annulus $(A,\bd A) \subset (X, P)$, and
\item $(X,P)$ is not a product $(S,\bd S) \times [0,1]$ where $S$ is a pair of pants.
\end{itemize}
This hyperbolic structure is unique up to isometry.
\end{thm}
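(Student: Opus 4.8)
The plan is to deduce the statement from Thurston's hyperbolization theorem for Haken pared $3$-manifolds~\cite{Mor1, Kap}, after recognizing a hyperbolic structure with parabolic meridians on $(M,G)$ as a hyperbolic structure on the pared manifold $(X,P)$.

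First I would identify the spaces involved: removing $P$ from $\bd X$ leaves exactly the interiors of the pairs of pants, while each annulus of $P$ becomes an annular end and each torus a toric end; hence $N\cong X\setminus P$, and $(M,G)$ admits a hyperbolic structure with parabolic meridians precisely when $\mathrm{int}(X)$ carries a geometrically finite complete hyperbolic metric whose parabolic locus is $P$ and whose remaining, finite-area, convex-core boundary --- lying on $\bd X\setminus P$ --- is totally geodesic. Since $\bd X\neq\emptyset$ and has no sphere components, irreducibility of $X$ makes it Haken, so Thurston's theorem applies: such a structure exists if and only if $(X,P)$ is a pared manifold that is not the exceptional $I$-bundle. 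It then remains to unwind the notion of pared manifold --- incompressibility of the parabolic locus, atoroidality, acylindricity --- and to pin down which $I$-bundle is exceptional here, which for a graph exterior can only be the product over a pair of pants.

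The body of the argument is the translation of the pared-manifold conditions into the four displayed bullets. Irreducibility of $X$ is the first half of the first bullet, and atoroidality --- in the homotopical form, which is what the theorem needs and what also accounts for the essential tori that doubling $N$ along the pants could create --- is the second half. Incompressibility of $P$ is the second bullet. The absence of an essential annulus $(A,\bd A)\subset(X,P)$ is the third bullet, and this single condition in fact controls all essential annuli of $X$: an essential annulus with boundary on the pants can be isotoped so that its boundary lies in $P$, because every essential simple closed curve on a $3$-punctured sphere is isotopic onto a boundary circle. Finally, if $X$ is an $I$-bundle over a surface then its horizontal boundary must consist of $3$-punctured spheres --- these being the non-parabolic boundary components of a graph exterior --- which forces the base to be a pair of pants; since the double of $(S,\bd S)\times[0,1]$ along its horizontal boundary is the Seifert-fibered $S\times S^1$, this $I$-bundle is indeed not hyperbolic and must be excluded, and that is the fourth bullet.

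Finally, for uniqueness I would pass to the double $D(N)$, a finite-volume cusped $3$-manifold: Mostow--Prasad rigidity gives uniqueness of its hyperbolic metric, and by Tollefson's classification of involutions with $2$-dimensional fixed-point set~\cite{tollef} (see also~\cite{Th2, FP}) the canonical involution of $D(N)$ is isotopic to an isometry of this metric; the metric then descends to the hyperbolic structure sought on $N$, with totally geodesic boundary, and is unique up to isometry. The step I expect to be the main obstacle is this translation: matching the precise forms of ``atoroidal'' and ``essential annulus'', and the list of exceptional $I$-bundles, appearing in the cited pared hyperbolization theorem with the streamlined four-bullet statement, so that the equivalence comes out with exactly one exceptional manifold and nothing else hiding among graph exteriors.
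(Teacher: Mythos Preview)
Your approach is essentially the same as the paper's: the paper states Theorem~\ref{hyp_par_merid} as a direct consequence of Thurston's hyperbolization theorem for pared $3$-manifolds~\cite{Mor1,Kap}, with uniqueness handled (in the paragraph preceding the theorem) by Mostow--Prasad rigidity together with Tollefson's involution theorem~\cite{tollef}. Your proposal simply unpacks this citation --- the identification $N\cong X\setminus P$, the translation of the pared-manifold axioms into the four bullets, and the observation that the only relevant $I$-bundle is the pants product --- and is correct.
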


\begin{rem}
\emph{To obtain a hyperbolic structure with geodesic boundary on a
general pared manifold $(X,P)$,  we would need to add the
requirements that $\bd X \setminus P$ incompressible and $(X,P)$
is acylindrical  (\emph{i.e.}, every annulus $(A,\bd A) \subset (X,
\bd X \setminus  P)$ is homotopic into $\bd X$). But these
conditions follow here since $\bd X \setminus P$ consists of $3$-punctured
spheres (see~\cite[pp.~243-244]{BLP}).}
\end{rem}

The conditions for hyperbolicity simplify considerably
when $(M,G)$ is $(0,1,2)$-irreducible, as defined in the introduction.
To elucidate the notion, we say
that $(M,G)$ is:
\begin{itemize}
\item 0-\emph{irreducible} if every 2-sphere in $M$ disjoint from $G$
bounds a 3-ball in $M$ disjoint from $G$;
\item 1-\emph{irreducible} if
there exists no 2-sphere in $M$ meeting $G$
transversely in a single point;
\item 2-\emph{irreducible} if every 2-sphere in $M$ meeting $G$
transversely in two points bounds a ball in $M$ that intersects $G$ in
a single unknotted arc.
\end{itemize}
Then a graph is $(0,1,2)$-irreducible if it is $i$-irreducible
for $i=0,1,2$.

\begin{thm}\label{hyp_par_irred}
$(M,G)$ admits a hyperbolic structure with parabolic meridians if and only if
\begin{itemize}
\item $(M,G)$ is $(0,1,2)$-irreducible,
\item $X$ is homotopically atoroidal and is not a solid torus or
the product of a torus with an interval, and
\item $(M,G)$ is not the trivial $\theta$-graph in $S^3$.
\end{itemize}
\end{thm}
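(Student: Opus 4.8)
The plan is to deduce Theorem~\ref{hyp_par_irred} from Theorem~\ref{hyp_par_merid} by showing that, under the standing assumptions on $X$ (atoroidal, not a solid torus, not $T^2\times I$) and the exclusion of the trivial $\theta$-graph, the four conditions of Theorem~\ref{hyp_par_merid} are \emph{equivalent} to $(0,1,2)$-irreducibility of $(M,G)$. So the argument splits into the two implications. For the forward direction, assume $(M,G)$ admits a hyperbolic structure with parabolic meridians; Theorem~\ref{hyp_par_merid} already gives irreducibility of $X$ and incompressibility of the annuli/tori of $P$, and rules out the pared product $(S,\bd S)\times[0,1]$. I would translate these into the language of $2$-spheres meeting $G$: a reducing $2$-sphere for $X$, or a $2$-sphere hitting $G$ in $0$, $1$ or $2$ points witnessing failure of $i$-irreducibility, produces either an essential sphere in $X$, a compressing disk for an annulus of $P$, or an essential annulus $(A,\bd A)\subset(X,P)$ — all forbidden. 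The one-point case is excluded because a sphere meeting $G$ once would give a non-separating or essential sphere, contradicting irreducibility of $X$; the two-point case uses that the ball bounded would have to intersect $G$ in an unknotted arc precisely when the corresponding annulus is inessential.

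For the converse — the substantive direction — assume $(M,G)$ is $(0,1,2)$-irreducible, $X$ is atoroidal and not a solid torus or $T^2\times I$, and $(M,G)$ is not the trivial $\theta$ in $S^3$; I must verify the four bullets of Theorem~\ref{hyp_par_merid}. Irreducibility of $X$: any $2$-sphere in $X$ is a $2$-sphere in $M$ disjoint from $G$, hence bounds a ball in $M$ disjoint from $G$ by $0$-irreducibility, and that ball lies in $X$. Incompressibility of $P$: a compressing disk for an annular component of $P$ (a meridian annulus of an edge) would, after surgery, yield a $2$-sphere meeting $G$ transversely in at most two points that does not bound the requisite unknotted-arc ball, contradicting $2$-irreducibility (or $1$-irreducibility, if one side closes up to a single intersection point); compressibility of a toral component (around a knot of $G$) is handled similarly using $0$- and $2$-irreducibility, and is in any case obstructed by atoroidality unless it bounds a solid torus, which is the excluded case. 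No essential annulus $(A,\bd A)\subset(X,P)$: here I would case-split on whether $\bd A$ lies on tori, on annuli, or on a $3$-punctured-sphere boundary component. An essential annulus with boundary on the meridian annuli of two edges of $G$ caps off to a $2$-sphere meeting $G$ in two points bounding a ball with a knotted or non-trivial arc — contradicting $2$-irreducibility — unless $(X,P)$ degenerates to the pared product over a pair of pants, which corresponds exactly to the trivial $\theta$-graph in $S^3$, explaining that exclusion; essential annuli running between a torus and an annulus, or between two tori, are ruled out by atoroidality together with the exclusion of the solid torus and $T^2\times I$. Finally the fourth bullet is immediate: $(X,P)=(S,\bd S)\times[0,1]$ with $S$ a pair of pants forces $M=S^3$ and $G$ the unknotted $\theta$-graph, which is excluded by hypothesis.

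The main obstacle is the \emph{essential-annulus} analysis in the converse direction, specifically showing that every essential $(A,\bd A)\subset(X,P)$ not arising from the trivial $\theta$ really does contradict one of $0$-, $1$-, or $2$-irreducibility. The delicate point is the two-point sphere case: when an essential annulus between two meridian annuli is capped off, one must argue that the resulting ball meets $G$ in an arc that is \emph{knotted} (or that the sphere is non-separating, or bounds on both sides nontrivially), so that $2$-irreducibility is genuinely violated — ruling out the possibility that the annulus, though $\pi_1$-essential, is ``inessential enough'' to be compatible with $(0,1,2)$-irreducibility. Here I would use the fact that $\bd X\setminus P$ consists of $3$-punctured spheres, as in the Remark, to see that an annulus with boundary on such a component is automatically boundary-parallel, so the only essential annuli to worry about have boundary on $P$; then a careful bookkeeping of how $A\cup(\text{two meridian disks of edges})$ sits in $M$ relative to $G$, distinguishing the parallel versus knotted configurations, finishes the case. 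The remaining topological inputs (irreducibility, incompressibility, the product case) are then routine translations between the ``spheres meeting $G$'' picture and the pared-manifold picture.
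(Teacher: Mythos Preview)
Your overall strategy---reduce to Theorem~\ref{hyp_par_merid} and verify its four bullets from $(0,1,2)$-irreducibility plus the stated exclusions---is exactly the paper's approach, and your treatment of irreducibility of $X$, incompressibility of $P$, the product case, and the annulus--annulus case of the essential-annulus analysis are all correct (your ``knottedness'' worry is harmless: if the capped sphere bounded an unknotted-arc ball, the annulus would be parallel into $P$, contradicting essentiality). The forward direction is also fine; the paper simply declares it routine.

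There is, however, a genuine gap in your essential-annulus analysis. You assert that an essential annulus $A$ running between a \emph{torus} component $T$ of $P$ and an \emph{annular} component $A'$ of $P$ is ``ruled out by atoroidality together with the exclusion of the solid torus and $T^2\times I$.'' But atoroidality does not apply here: the frontier of a regular neighbourhood of $A\cup T$ in $X$ is not a torus but an \emph{annulus}, with both boundary circles on $A'$. This is precisely the mechanism the paper uses---one reduces the torus--annulus case to the annulus--annulus case and then invokes $2$-irreducibility (the resulting $2$-sphere in $M$ encloses the entire knot component corresponding to $T$, so it certainly cannot bound a ball meeting $G$ in a single unknotted arc). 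Without this reduction, atoroidality alone gives you nothing to work with in the mixed case.

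For the torus--torus case your sketch (``atoroidality plus the exclusions'') is in the right spirit, but the paper supplies the missing mechanism: the frontier of a regular neighbourhood of $A$ together with the incident tori is one or two tori; homotopic atoroidality forces these to be compressible or boundary-parallel, and then irreducibility of $X$ and incompressibility of $A$ pin down $X$ as Seifert fibred over a disc, annulus, or pair of pants with few singular fibres, which atoroidality reduces to the excluded solid torus or $T^2\times I$. You should be prepared to run that Seifert argument rather than invoke atoroidality as a black box.
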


\begin{proof}
It is easy to check that the conditions listed are necessary for hyperbolicity.
To show that they are sufficient,
first note that $0$-irreducibility of $(M,G)$ implies that $X$ is irreducible, and
$1$-irreducibility implies that $P$ is incompressible or $X$ is a solid torus,
but the latter possibility is excluded. Moreover $(X,P)$ is not
a product $(S,\partial S) \times [0,1]$ where $S$ is a pair of pants,
because $(M,G)$ is not the trivial $\theta$-graph in $S^3$.
According to the previous theorem
we are only left to show that there cannot exist an essential annulus
$(A,\bd A) \subset (X, P)$. Suppose the contrary and note
that each of the two components of $\partial A$
is incident to either an annular or a toric component of $P$.
We show that the existence of such an annulus $A$ is impossible
by considering the three possibilities:
\begin{enumerate}
\item If $A$ is only incident to annuli of $P$, we readily see that
$2$-irreduciblity is violated.
\item If $A$ is incident to an annular
component $A' $ of $P$ and a torus component $T$ of $P$, then the
boundary of a regular neighbourhood of $A\cup T$ is another
annulus incident to $A'$ only. Again we see that
$2$-irreducibility is violated,
since the resulting sphere does not bound a
ball containing a single unknotted arc.
\item If $A$ is incident to toric components only, proceeding as in the
previous case we find one or two tori, depending on whether the toric
components are distinct or not. Homotopic atoroidality implies that these
tori must be compressible or boundary parallel in $X$. Using
irreducibility of $X$ and incompressibility of $A$, we find that $X$ is
Seifert fibred with the core circle of $A$ as a fibre and base space
either a pair of pants,
an annulus with at most one singular point, or a disc with at most two
singular points. By homotopic atoroidality, we deduce that $X$ is the
product of a torus and an interval or a solid torus, contrary to our assumptions.
\end{enumerate}
\end{proof}

\begin{cor}
If $G$ is a trivalent graph containing at least one vertex, then
$(M,G)$ is hyperbolic with parabolic meridians if and only if
$(M,G)$ is $(0,1,2)$-irreducible, geometrically atoroidal and not
the trivial $\theta$-graph in $S^3$.
\end{cor}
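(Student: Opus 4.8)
The plan is to read off the statement directly from Theorem~\ref{hyp_par_irred}. Under the extra assumption that $G$ has at least one vertex, the claim is that the three conditions appearing in that theorem collapse to the three conditions in the corollary: $(0,1,2)$-irreducibility and ``not the trivial $\theta$-graph in $S^3$'' appear verbatim in both, the clause ``$X$ is not a solid torus or a product $T^2\times[0,1]$'' becomes automatic, and ``$X$ homotopically atoroidal'' can be weakened to ``$X$ geometrically atoroidal'' (which is what ``$(M,G)$ geometrically atoroidal'' is understood to mean, \emph{i.e.}\ the graph exterior contains no incompressible torus that is not boundary-parallel).

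First I would dispose of the easy reduction. Let $v$ be a vertex of $G$; then by construction $\bd X$ has a component $S_v$ that is a pair of pants, hence a surface which is neither a torus nor a $2$-sphere. Since the solid torus and $T^2\times[0,1]$ have all their boundary components tori, $X$ can be neither of these, so the corresponding clause of Theorem~\ref{hyp_par_irred} holds vacuously once $(M,G)$ has a vertex. I would also record here that $X$ is Haken: it is irreducible because $(M,G)$ is $0$-irreducible (exactly as recalled in the proof of Theorem~\ref{hyp_par_irred}), and an irreducible $3$-manifold with nonempty boundary which is not a ball — and here $\bd X$ contains the non-spherical surface $S_v$ — is Haken.

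The heart of the matter is the equivalence of geometric and homotopic atoroidality for this particular $X$. One direction is trivial, since homotopic atoroidality formally implies geometric atoroidality. For the converse I would argue by contradiction: suppose $X$ is geometrically atoroidal but carries a $\pi_1$-injective singular torus not homotopic into $\bd X$. Since $X$ is Haken, the Torus Theorem (Jaco--Shalen, Johannson; together with the resolution of its Seifert-fibred alternative by Casson--Jungreis and Gabai) gives two possibilities: either $X$ contains an embedded incompressible torus that is not boundary-parallel, or $X$ is a Seifert fibred space. The first is excluded by geometric atoroidality. The second is also impossible, because $X$ is orientable with nonempty boundary, so a Seifert fibration would force every component of $\bd X$ to be a torus, contradicting the presence of the pair of pants $S_v$. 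Hence no such singular torus exists, that is, $X$ is homotopically atoroidal. Combining this equivalence with the previous paragraph and with Theorem~\ref{hyp_par_irred} yields the corollary.

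The only genuinely delicate step is the appeal to the Torus Theorem: one must use a form valid for Haken $3$-manifolds with nonempty boundary and be careful about the Seifert-fibred conclusion and the references that settle it. All the remaining ingredients — identifying the pair-of-pants boundary component coming from a vertex, ruling out the solid torus and $T^2\times[0,1]$, recalling that $0$-irreducibility forces $X$ to be irreducible, and recalling that an orientable Seifert fibred manifold has toral boundary — are routine.
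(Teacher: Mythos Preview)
Your argument is correct and is exactly the intended deduction from Theorem~\ref{hyp_par_irred}; the paper states the corollary without proof, and the route you take --- drop the solid-torus/$T^2\times[0,1]$ clause using the vertex, then upgrade geometric to homotopic atoroidality via the Torus Theorem by ruling out the Seifert alternative --- is the natural one.

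One small correction that does not affect the logic: $\partial X$ does not have a pair-of-pants \emph{component}. The paper's description of $\partial X$ as pairs of pants together with annuli and tori is a decomposition of the boundary into pieces $P$ and $\partial X\setminus P$, not into connected components; the connected component of $\partial X$ corresponding to a connected piece of $G$ containing a vertex is a closed orientable surface of genus at least $2$ (e.g.\ genus $2$ for a $\theta$-graph or handcuff). Replace ``pair of pants $S_v$'' by ``closed surface of genus $\geqslant 2$'' throughout and everything you wrote goes through verbatim: such a surface is neither a sphere nor a torus, so $X$ is not a ball, not a solid torus, not $T^2\times[0,1]$, and not Seifert fibred.
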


\subsection{Hyperbolic structures with geodesic boundary}

Let $(M,G)$ be a graph, and let $X$ denote the graph exterior as
above. Let us define $Y$ as the manifold obtained by mirroring $X$
in its non-toric boundary components, so $Y$ is either closed or
bounded by tori. Then $X$ minus its toric boundary components has a
hyperbolic structure with totally geodesic boundary if and only if
the interior of $Y$ has a complete hyperbolic structure. By
Thurston's hyperbolization theorem~\cite{Mor1,Kap}
and Mostow-Prasad rigidity (see~\cite[p.~14]{Th2}) we then have:

\begin{thm}\label{hyp_geod_bd}
$X$ minus its toric boundary components
admits a hyperbolic structure with totally geodesic boundary if
and only if  $X$ is irreducible, boundary incompressible,
homotopically atoroidal, and acylindrical. This hyperbolic structure
is unique up to isometry.
\end{thm}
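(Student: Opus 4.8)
The plan is to reduce everything to the mirror manifold $Y$, exploiting the equivalence stated just before the theorem: $X$ minus its toric boundary components carries a hyperbolic structure with totally geodesic boundary if and only if $\mathrm{int}(Y)$ carries a complete finite-volume hyperbolic structure, where $Y$ is obtained by mirroring $X$ along its non-toric boundary components (which, in our setting, are pairs of pants). So the task becomes to characterize when $\mathrm{int}(Y)$ is hyperbolic and to translate the conditions back and forth between $X$ and $Y$.

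For the ``only if'' direction I would argue directly. If $X$ minus its toric boundary admits a hyperbolic metric with totally geodesic boundary, then it is the convex core of a geometrically finite hyperbolic manifold: hence it is aspherical, so $X$ is irreducible; its fundamental group is relatively word-hyperbolic, so $X$ is homotopically atoroidal; the totally geodesic boundary is $\pi_1$-injective, giving boundary incompressibility; and an essential annulus $(A,\bd A)\subset(X,\bd X\setminus P)$ with both ends on the geodesic boundary could be straightened to a totally geodesic immersed annulus, which is impossible in a manifold whose geodesic boundary bounds a negatively curved interior unless $A$ is boundary-parallel, so $X$ is acylindrical. These implications are standard.

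For the ``if'' direction, assume $X$ is irreducible, boundary incompressible, homotopically atoroidal and acylindrical. First, $Y$ is a double of $X$ along its incompressible pair-of-pants boundary, hence Haken, and $\bd Y$ is a (possibly empty) union of tori. One checks that $Y$ is irreducible: an essential sphere in $Y$, put transverse to the mirror surface, is pushed off it by innermost-disc arguments using irreducibility and boundary incompressibility of $X$, yielding an essential sphere in $X$, a contradiction. Likewise $Y$ is homotopically atoroidal: an essential torus, made transverse to the mirror, is cut into essential annuli (with ends on $\bd X\setminus P$) or essential tori in $X$, contradicting acylindricity or homotopic atoroidality. Finally one must rule out $Y$ being Seifert fibred or a product $T^2\times[0,1]$: an invariant Seifert structure would restrict to a Seifert or $I$-bundle structure on $X$ incompatible with the pair-of-pants boundary and acylindricity, while $Y=T^2\times[0,1]$ would force $X$ to be a twisted $I$-bundle over a Klein bottle or over an annulus, again excluded. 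Thurston's hyperbolization theorem for Haken manifolds with toroidal boundary~\cite{Mor1,Kap} then gives $\mathrm{int}(Y)$ a complete finite-volume hyperbolic metric, and hence the desired structure on $X$. For uniqueness, apply Mostow--Prasad rigidity to $\mathrm{int}(Y)$: the hyperbolic metric is unique up to isometry, and the mirror involution is homotopic to an isometry, necessarily realized by an involution with $2$-dimensional totally geodesic fixed set by Tollefson's classification of such involutions (exactly as in Section~\ref{parabolic:subsection}), so it descends to a well-defined hyperbolic structure with totally geodesic boundary on $X$ independent of all choices.

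The main obstacle I anticipate is precisely the case analysis ruling out the Seifert-fibred and product possibilities for $Y$ while tracking the mirror symmetry --- that is, verifying that the four topological conditions on $X$ are exactly what is needed for $Y$ to satisfy the hypotheses of Thurston's theorem. Everything else reduces to standard transversality and innermost-disc manipulations with the mirror surface.
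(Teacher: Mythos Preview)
Your approach is essentially the same as the paper's: reduce to the mirror manifold $Y$ and invoke Thurston's hyperbolization theorem together with Mostow--Prasad rigidity. The paper gives no detailed proof at all --- it simply states the equivalence between hyperbolicity of $X$ with geodesic boundary and hyperbolicity of $\mathrm{int}(Y)$, and then cites~\cite{Mor1,Kap} and~\cite[p.~14]{Th2} --- so your sketch is in fact considerably more detailed than what appears in the paper, particularly in the transversality arguments translating the four conditions on $X$ into the hypotheses of Thurston's theorem for $Y$.
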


Comparing
Theorems~\ref{hyp_par_merid} and~\ref{hyp_geod_bd} one easily sees
that if $X$ minus its toric boundary components
admits a hyperbolic structure with geodesic
boundary, then $(M,G)$ admits a hyperbolic structure with
parabolic meridians. The converse is however false, as most of the
pairs $(M,G)$ described below show.

\subsection{Hyperbolic orbifolds}
One of the initial motivations of our work was the study of hyperbolic
$3$-orbifolds, but the analysis of graphs turned out to be interesting enough
by itself, so we decided to leave orbifolds for the future.
However we mention them briefly here.

Given a trivalent graph $G$ in a closed $3$-manifold $M$, we obtain an orbifold $Q$
associated to $(M,G)$ by attaching an integer label $n_e \geqslant 2$ to
each edge or circle $e$ of $G$. Note that we do not impose any restrictions
on the labels $(p,q,r)$ of the edges incident to a vertex $v$, so from a topological
viewpoint $v$ gives rise either to an interior point of $Q$
(if $\frac{1}{p}+\frac{1}{q}+\frac{1}{r}>1$)
or to a boundary component of $Q$ --- a 2-orbifold of type $S^2(p,q,r)$.

We will say that $Q$ is \emph{hyperbolic} if
$M \setminus G$ admits an incomplete hyperbolic metric whose completion
has  a cone angle $\frac{2\pi}{n_e}$ along each
edge or circle $e$ in $G$.
Depending on whether $\frac{1}{p}+\frac{1}{q}+\frac{1}{r} - 1$ is positive, zero or negative,
a vertex with incoming labels $(p,q,r)$ gives rise to an interior point of $Q$ to which
the singular metric extends, to a cusp of $Q$, or to a totally geodesic boundary component of $Q$.

The main connections between orbifold hyperbolic structures and those we
deal with in this paper are as follows:

\begin{itemize}
\item  If $(M,G)$ has a hyperbolic orbifold structure for some choice of
labels $n_e$, then $(M,G)$ admits a hyperbolic structure with
parabolic meridians.

\item If $(M,G)$ admits a hyperbolic structure with parabolic meridians
then the corresponding orbifolds are hyperbolic provided all
labels are sufficiently large; moreover the structure
with parabolic meridians
can be regarded as the limit of the orbifold hyperbolic structures as all labels
tend to infinity.
\end{itemize}

The first assertion follows from Theorem~\ref{hyp_par_merid} by
topological arguments only (see~\cite[Prop.~6.1]{BLP}), while the second
one is a consequence of Thurston's hyperbolic Dehn surgery theorem
(see~\cite{BLP, CHK} for details).

\subsection{Algorithmic search for hyperbolic structures}
As already mentioned, the hyperbolic structures and related
invariants on the 123 pairs of our census have been obtained using
the computer program \emph{Orb}~\cite{orb}. More details on this
program will be provided below, but we outline here the underlying
theoretical idea (due to Thurston~\cite{bible}) of the algorithmic
construction of a hyperbolic structure with geodesic boundary on a
pared manifold $(X,P)$, where $X$ is compact but not closed and $P$
is a collection of tori and annuli on $\partial X$.

The starting point is a (suitably defined) \emph{ideal triangulation} of $(X,P)$, namely
a realization of $(X,P)$ as a gluing of \emph{generalized ideal tetrahedra}. Each of these
is a tetrahedron with its vertices removed and, depending
on its position with respect to $\partial X$ and $P$, perhaps entire
edges and/or open regular neighbourhoods of vertices also removed.
The next step is to choose a realization of each of these tetrahedra as
a geodesic generalized ideal tetrahedron in hyperbolic $3$-space. These realizations
are parameterized by certain moduli, and the condition that the
hyperbolic structures on the individual tetrahedra match up to give a hyperbolic
structure on $(X,P)$ translates into equations in the moduli. The algorithm
then consists of changing the initial moduli using Newton's method until the (unique) solution
of the equations is found.

When $M$ is closed one can search for its hyperbolic structure using a similar method,
starting from a decomposition of $M$ into compact tetrahedra~\cite{Cas}.

\subsection{Canonical cell decompositions}
Whenever a hyperbolic manifold $X$ is not closed, it admits a canonical
decomposition into geodesic hyperbolic polyhedra, which allows one to very efficiently
compute its symmetry group and compare it for equality with another such manifold.
The decomposition was defined by Epstein and Penner~\cite{EpPe} when $\partial X=\emptyset$
but $X$ has cusps, and by Kojima~\cite{Koj1,Koj2} when $\partial X\neq\emptyset$.
We will now briefly outline the latter construction.

Begin with the
geodesic boundary components of $X$ and very small horospherical
cross sections of any torus cusps of $X$, and expand these
surfaces at the same rate until they bump to give a 2-complex (the
cut locus of the initial boundary surfaces). Then dual to this
complex is the {\em Kojima canonical decomposition} of $X$ into
generalized ideal hyperbolic polyhedra. This is independent of the
choice of horosphere  cross sections provided they are chosen
sufficiently small, and gives a complete topological invariant of
the manifold.

Thus two finite volume hyperbolic $3$-manifolds with geodesic
boundary are isometric (or, equivalently, homeomorphic)
if and only if their Kojima canonical
decompositions are combinatorially the same; and the symmetry
group of isometries of such a manifold is the group of
combinatorial automorphisms of the canonical decomposition.
Similarly, two graphs admitting hyperbolic structures with parabolic meridians are
equivalent if and only if there is a combinatorial isomorphism
between their canonical decompositions taking meridians to
meridians; and the group of symmetries of such a graph is the group
of combinatorial automorphisms of the canonical decomposition
taking meridians to meridians.

\subsection{Arithmetic invariants}\label{arith_invar}
Let us first note that a hyperbolic structure on an
orientable $3$-manifold without boundary corresponds to
a realization of the manifold as the quotient of hyperbolic
space $\H^3$ under the action of a discrete group $\Gamma$
of orientation-preserving isometries of
$\H^3$. If the manifold has boundary, $\H^3$ should be replaced
by a $\Gamma$-invariant intersection of closed half-spaces in $\H^3$.
Moreover for any given hyperbolic $3$-manifold, the group
$\Gamma$ is well-defined up to conjugation within
the full group of orientation-preserving isometries of
$\H^3$, which is isomorphic to $\mbox{PSL}(2,\C)$.

If  $\Gamma$ is a discrete subgroup of $\mbox{PSL}(2,\C)$, then
the {\em invariant trace field} $k(\Gamma)\subset \C$ is the field
generated by the traces of the elements of $\Gamma^{(2)} =
\{\gamma^2\mid \gamma\in\Gamma\}$ lifted to $\mbox{SL}(2,\C)$.
This is a commensurability invariant of $\Gamma$ (unchanged if
$\Gamma$ is replaced by a finite index subgroup). Further, if
$\H^3/\Gamma$ has finite volume then it follows from Mostow-Prasad
rigidity that $k(\Gamma)$ is a number field, \emph{i.e.}, a finite degree
extension of the rational numbers $\Q$.  (See~\cite{MacReid} for
an excellent discussion and proofs.)

If a trivalent graph $(M,G)$ admits a hyperbolic structure $N$ with parabolic meridians, then $N$ is the convex hull of $\H^3/\Gamma$ where $\Gamma$ is a discrete subgroup of $\mbox{PSL}(2,\C)$. Thus $k(\Gamma)$ is an invariant of $(M,G)$. Now the double $D(N)$ (defined at the start of Subsection \ref{parabolic:subsection}) has the form $\H^3/\Gamma_1$, where $\Gamma_1$ is a Kleinian group containing $\Gamma$. 
Since $D(N)$ is hyperbolic with finite volume, $k(\Gamma_1)$ is an algebraic number field. Hence the subfield $k(\Gamma)$ is also an algebraic number field. We compute this by combining \emph{Orb} with a
modified version of Oliver Goodman's program \emph{Snap} (\cite{snap}).

\emph{Snap} begins with generators and relations for $\Gamma$, and a
numerical approximation to $\Gamma$ provided by \emph{Orb}.
It first
refines this using Newton's method to obtain a high precision
numerical approximation to $\Gamma$, and
then tries to find exact
descriptions of matrix entries and their traces as algebraic
numbers using the LLL-algorithm. Finally \emph{Snap} verifies that we
have an exact representation of $\Gamma$ by checking that the
relations for $\Gamma$ are satisfied using exact calculations in a
number field, and computes the  invariant trace field $k(\Gamma)$ and
associated algebraic invariants. (See~\cite{CGHN} for a detailed
description of \emph{Snap}.)

\section{Complexity theory}\label{compl:section}

A theory of complexity for 3-orbifolds, mimicking Matveev's theory
for manifolds~\cite{Matv:AAM}, was developed in~\cite{orb:compl}.
Removing all references to edge orders and their contributions to
the complexity, one deduces a theory of complexity for 3-valent
graphs embedded in closed orientable $3$-manifolds. In this
paragraph we will summarize the main features of this theory.
The main ideas of this theory are as follows:

\begin{itemize}

\item Triangulations are the best way to manipulate 3-dimensional
topological objects by computer.

\item Therefore, the minimal number of tetrahedra required to triangulate
an object gives a very natural measure of the complexity of the object.

\item However, there exists another definition of complexity, based
on the notion of simple spine. A triangulation, via a certain ``duality,'' gives
rise to a simple spine, therefore complexity defined via spines is not
greater than complexity defined via triangulations.

\item Simple spines are more flexible than triangulations. In particular,
there are more general non-minimality criteria for simple
spines than for triangulations. More specifically, there are instances
where a triangulation may appear to be minimal (as a triangulation)
whereas the dual spine is obviously not minimal (as a simple spine).

\item A theorem ensures that for a hyperbolic object a minimal simple spine
is always dual to a triangulation.

\item As a conclusion, if one wants to carry out a census of hyperbolic
objects in order of increasing complexity, one deals by computer
with triangulations, but one discards triangulations to which, via duality,
the stronger non-minimality criteria for spines apply. This is because,
thanks to the theorem, such a triangulation encodes either a non-hyperbolic
object or a hyperbolic object that has been met earlier in the census.

\end{itemize}
We will now turn to a more
detailed discussion.

\subsection{Simple spines and complexity}
To proceed with the key notions and results we recall
a definition given in the Introduction. We call
\emph{simple}\footnote{In~\cite{Matv:AAM} such a polyhedron was
originally called \emph{almost simple}, while the term
\emph{simple} was employed for almost special polyhedra, see
Subsection~\ref{special:subsection}.} a compact polyhedron $P$ (in
the PL sense~\cite{RS}) such that the link of each point is a
subset of the 1-skeleton of the tetrahedron. We denote by $V(P)$
the set of points of $P$ having the whole 1-skeleton of the
tetrahedron as a link, and we note that $V(P)$ is a finite set.

\begin{defn} \label{simple_spine_def}
A \emph{simple spine} of a trivalent graph $(M,G)$  is
a simple polyhedron $P$ embedded in $M$ in such a way that:
\begin{enumerate}
\item
$G$ intersects $P$ transversely.
(In particular, $P\cap G$ consists of a finite number of
points that are not vertices of $G$).
\item Removing an open regular neighbourhood of $P$ from $(M,G)$ gives a finite collection
of balls, each of which intersects $G$ in either
\begin{itemize}
\item the empty set, or \item a single unknotted arc of $G$, or
\item a vertex of $G$ with unknotted strands leaving the vertex
and reaching the boundary of the ball. (See
Figure~\ref{balls:fig}.)
\end{itemize}
\end{enumerate}
\end{defn}

It is very easy to see (and it will follow from the duality with
triangulations in Proposition~\ref{duality:mfld:prop}) that each
$(M,G)$ admits simple spines. Therefore the \emph{complexity} of
$(M,G)$, that we define as
$$c(M,G)=\min\big\{\#V(P):\ P\ \textrm{simple\ spine\ of}\ (M,G)\big\},$$
is a finite number.

\subsection{Special spines and duality} \label{special:subsection}
To illustrate the relation between spines and triangulations, we need to
introduce two subsequent refinements of the notion of simple polyhedron.
We will say that $P$
is \emph{almost-special} if it is a compact polyhedron and each of its points
has one of the following sets as a link:
\begin{enumerate}
\item The $1$-skeleton of the tetrahedron with two open opposite edges removed
(a circle);
\item The $1$-skeleton of the tetrahedron with one open edge removed
(a circle with a diameter);
\item The $1$-skeleton of the tetrahedron
(a circle with three radii).
\end{enumerate}
The corresponding local structure
of an almost-special polyhedron is
shown in Figure~\ref{almost:special:fig}.

    \begin{figure}[h]
    \begin{center}
    \includegraphics[scale=1.0]{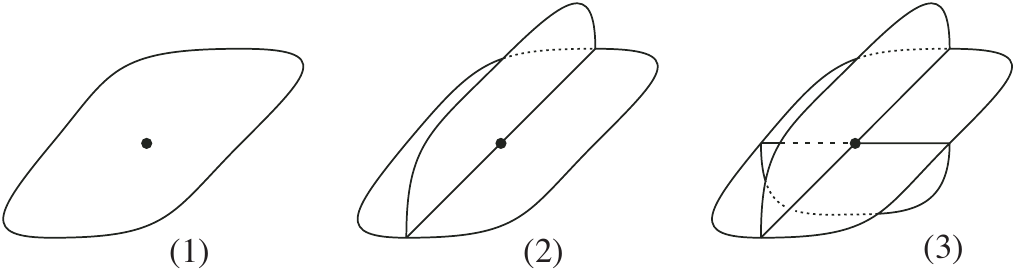}
    \mycap{\label{almost:special:fig}Local structure of an almost-special polyhedron.}
    \end{center}
    \end{figure}
    
Besides the set $V(P)$ of vertices already introduced
above for simple polyhedra, we can define for an almost-special $P$ the
\emph{singular set}, given by the non-surface points and denoted
by $S(P)$. We remark that $S(P)$ is a 4-valent graph
with vertex set $V(P)$. Note also that if
  $P$ is an almost-special spine of $(M,G)$, by the transversality
  assumption, $G$ intersects $P$ away from $S(P)$.

An almost-special polyhedron $P$ is called \emph{special} if $P\setminus S(P)$
is a union of open discs and $S(P)\setminus V(P)$ is a union of open segments.
A \emph{special spine} of a graph $(M,G)$ is a simple spine which, in addition,
is a special polyhedron.

The following result, which refers to the case of manifolds
without graphs embedded in them, has been known for a long time.
We point out that we use the term \emph{triangulation} for a (closed,
connected, orientable) $3$-manifold $M$
in a generalized (not
strictly PL~\cite{RS}) sense. Namely, we mean a realization of $M$
as a simplicial pairing  between the faces of a finite union of
tetrahedra, \emph{i.e.}, we allow multiple and self-adjacencies
between tetrahedra.

\begin{prop}\label{duality:mfld:prop}
Given a $3$-manifold $M$, for each triangulation $\calT$ of $M$
define $\Phi(\calT)$ as the $2$-skeleton of the cell decomposition
dual to $\calT$, see Figure~\ref{duality:fig}.
    \begin{figure}
    \begin{center}
    \includegraphics[scale=1.0]{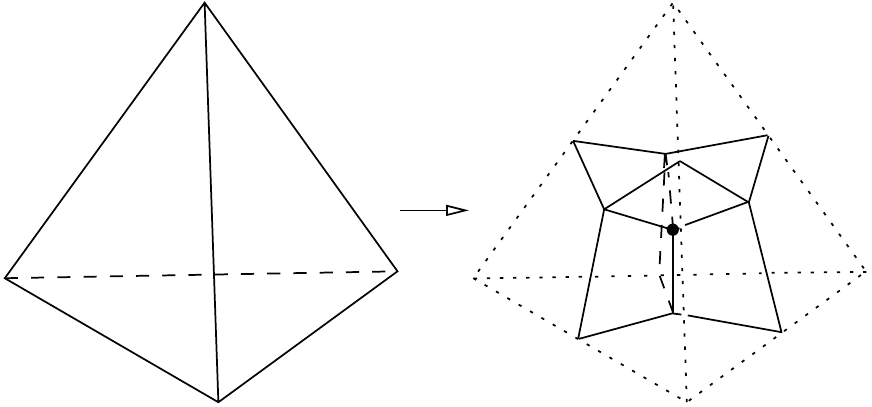}
    \mycap{\label{duality:fig}Duality between triangulations and special spines.}
    \end{center}
    \end{figure}
Then $\Phi$ defines a
bijection between the set of (isotopy classes of) triangulations
of $M$ and the set of (isotopy classes of) special spines of $M$.
\end{prop}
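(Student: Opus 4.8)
The plan is to realise $\Phi$ as one half of a pair of mutually inverse constructions, following the classical correspondence between special spines and triangulations due to Casler and Matveev. First I would check that $\Phi$ is well defined on isotopy classes and takes values in the set of special spines. Given a triangulation $\calT$, the dual cell decomposition $\calT^{*}$ is built canonically (pass to the second barycentric subdivision and regroup simplices), so isotopic triangulations yield isotopic dual decompositions and hence isotopic $2$-skeleta. To see that $P=\Phi(\calT)$ is special, one inspects the three local pictures inside a tetrahedron $\Delta$ of $\calT$: near the barycentre of $\Delta$ the polyhedron $P$ is the cone over a subdivided $K_{4}$ (the $1$-skeleton of the cell dual to $\Delta$), which is exactly the local model of a vertex of a special polyhedron; along the dual of a triangle of $\calT$ it looks like $Y\times(0,1)$; on the dual of an edge of $\calT$ it is an open disc. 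Hence $V(P)$ consists of the barycentres of the tetrahedra of $\calT$, the $2$-cells of $P$ (dual to the edges of $\calT$) are open discs, and the components of $S(P)\setminus V(P)$ (dual to the triangles of $\calT$) are open segments, so $P$ is special. Finally $M$ is the union of a regular neighbourhood of $P$ with the closed dual $3$-cells, one per vertex of $\calT$; each of these is a regular neighbourhood of a vertex, hence a ball, so $P$ is a spine in the sense of Definition~\ref{simple_spine_def} with $G=\emptyset$.

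Next I would construct the inverse $\Psi$. Let $P$ be a special spine of $M$, so that $M$ minus an open regular neighbourhood of $P$ is a disjoint union of balls $B_{1},\dots,B_{n}$. For each vertex $v\in V(P)$ a small ball neighbourhood $D_{v}$ meets $P$ in a cone over $K_{4}$, so $\partial D_{v}$ is a $2$-sphere carrying a copy of $K_{4}$ that divides it into four triangular regions, each lying in one of the balls $B_{i}$; this data determines an abstract tetrahedron $\Delta_{v}$ dual to $v$. Each open segment $\sigma$ of $S(P)\setminus V(P)$ has two endpoints $v,v'$ in $V(P)$ (possibly equal), and the $Y\times(0,1)$ model around $\sigma$ prescribes an identification of a face of $\Delta_{v}$ with a face of $\Delta_{v'}$; the $2$-cells of $P$ then become the edges of the resulting complex. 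Gluing the $\Delta_{v}$ by these face identifications produces a triangulation $\calT=\Psi(P)$, and it triangulates $M$ because $M$ is reassembled from $P$ and the balls $B_{i}$ by exactly the handle pattern dual to $\calT$ (neighbourhoods of the vertices, segments and $2$-cells of $P$ playing the roles of the $0$-, $1$- and $2$-handles, and the $B_{i}$ the role of the $3$-handles). Isotopic spines visibly give isotopic triangulations.

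It then remains to verify $\Psi\circ\Phi=\mathrm{id}$ and $\Phi\circ\Psi=\mathrm{id}$, both read off from the local models. For the first: inside $\Phi(\calT)$ the ball neighbourhood of the barycentre of a tetrahedron $\Delta$ carries on its boundary exactly the $K_{4}$ pattern coming from $\Delta$, with the same face adjacencies, so $\Psi$ returns $\Delta$ with its original pairings, i.e.\ returns $\calT$. For the second: the dual $2$-skeleton of $\Psi(P)$ has one vertex near each $v\in V(P)$, one edge through each face identification (that is, through each segment of $S(P)$), and one $2$-cell dual to each edge of $\Psi(P)$ (that is, to each $2$-cell of $P$), with the same incidences as in $P$, so it can be isotoped onto $P$.

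I expect the delicate point to be the claim in the second paragraph that $\Psi(P)$ is homeomorphic to $M$ rather than to some closed pseudomanifold. This is exactly where one uses the hypothesis that the complementary regions of $P$ are \emph{balls}, so that coning them back in genuinely reconstructs $M$, and where one must make precise the bookkeeping matching the $Y\times(0,1)$ and $K_{4}$ local models of $P$ with the face pairings of the tetrahedra $\Delta_{v}$. The \emph{special} (rather than merely \emph{simple}) hypothesis is what keeps this bookkeeping finite and unambiguous — for a merely simple spine the $2$-cells need not be discs and the recipe breaks down — so checking that the two local-model conventions are mutually compatible, so that $\Phi$ and $\Psi$ are inverse up to isotopy, is the technical heart of the argument.
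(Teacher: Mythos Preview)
The paper does not actually prove this proposition: it is stated as a classical fact that ``has been known for a long time'' and no argument is given. So there is nothing to compare your proof against.

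That said, your outline is the standard Casler--Matveev construction and is essentially correct. A couple of small points worth tightening. First, in constructing $\Psi$ you should note that under the paper's Definition~\ref{simple_spine_def} the complement of $P$ may consist of \emph{several} balls $B_1,\dots,B_n$, and these become the vertices of $\Psi(P)$; you wrote this but then slip into language (``coning them back in'') that reads as if there were a single ball. Second, when you say each segment of $S(P)\setminus V(P)$ has ``two endpoints $v,v'$ in $V(P)$ (possibly equal)'', you are implicitly using that $P$ is \emph{special}, so that $S(P)\setminus V(P)$ consists of open arcs rather than circles; it is worth saying this explicitly, since it is exactly the step where ``almost-special'' would not suffice. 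With those clarifications the argument goes through.
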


\subsection{(Efficient) triangulations of graphs}
We now turn to graphs $(M,G)$, and we define a \emph{triangulation}
of $(M,G)$ to be a (generalized)
triangulation $\calT$ of $M$ which contains $G$ as a subset of
its 1-skeleton. We will further say that $\calT$ is \emph{efficient} if
it has precisely one vertex at each vertex of $G$, one on each knot component of
$G$, and no other vertices.

The following easy result shows that under suitable conditions
Proposition~\ref{duality:mfld:prop} has a refinement to graphs:

\begin{prop}\label{duality:graph:prop}
For a simple spine $P$ of a graph $(M,G)$ the following conditions are equivalent:
\begin{itemize}
\item $P$ is dual to a triangulation of $(M,G)$;
\item $P$ is special, $G$ intersects $P$ transversely away from $S(P)$, and each component
of $P\setminus S(P)$
intersects $G$ at most once.
\end{itemize}
\end{prop}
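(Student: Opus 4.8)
The plan is to prove the two directions separately, using Proposition~\ref{duality:mfld:prop} as the backbone and then tracking how the graph $G$ sits inside the picture. For the forward direction, suppose $P$ is dual to a triangulation $\calT$ of $(M,G)$. Then by definition $\calT$ is a triangulation of $M$ with $G$ contained in its $1$-skeleton, and $P=\Phi(\calT)$ is the dual $2$-skeleton, so $P$ is automatically a special polyhedron by Proposition~\ref{duality:mfld:prop}. The complementary balls of $P$ are exactly the (closed stars of) vertices of $\calT$, and the components of $P\setminus S(P)$ are the $2$-cells dual to the edges of $\calT$. So I would argue: an edge $e$ of $\calT$ that lies on $G$ contributes a dual $2$-cell meeting $G$ in exactly one point (the point $e\cap(\text{dual }2\text{-cell})$), an edge not on $G$ contributes a $2$-cell disjoint from $G$, and no $2$-cell meets $G$ more than once because each such $2$-cell is dual to a single edge. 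Transversality of $G$ to $P$ away from $S(P)$ is clear from this local model (an edge of $G$ crosses the interior of exactly one dual face). This gives the ``$\Rightarrow$'' implication.

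For the reverse direction, suppose $P$ is a special simple spine of $(M,G)$ with $G$ transverse to $P$ away from $S(P)$ and each component of $P\setminus S(P)$ meeting $G$ at most once. Since $P$ is special, it is in particular a special spine of the manifold $M$ (forgetting $G$), so by Proposition~\ref{duality:mfld:prop} there is a triangulation $\calT$ of $M$ with $\Phi(\calT)=P$; concretely, $\calT$ is reconstructed by placing one vertex in each complementary ball of $P$ and one edge transverse to each $2$-cell of $P\setminus S(P)$. The task is then to show that $G$ can be isotoped onto the $1$-skeleton of $\calT$ — or, more precisely, that $\calT$ can be chosen so that $G$ already lies in $\calT^{(1)}$. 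Here is where the hypotheses enter. Because $G$ meets $P$ transversely in finitely many points, away from $S(P)$, each intersection point lies in the interior of a $2$-cell of $P\setminus S(P)$; the ``at most once'' condition says each such $2$-cell is hit exactly once. Each arc of $G$ between consecutive intersection points (or between an intersection point and a vertex of $G$) lies inside a single complementary ball, and by the spine condition (Definition~\ref{simple_spine_def}) that arc is unknotted in the ball — either a trivial arc, or one of the three strands at a vertex of $G$. So inside each ball the part of $G$ is a cone (on $\le 3$ points of the boundary sphere) from the central vertex of $\calT$, hence isotopic rel boundary into the edges of $\calT$ emanating from that vertex; and across each $2$-cell of $P$, the single point $G\cap(\text{$2$-cell})$ is exactly where the dual edge of $\calT$ crosses, so these arcs glue up to realize $G$ as a subcomplex of $\calT^{(1)}$. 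One must also check $\calT$ is a genuine (generalized) triangulation, but that is guaranteed by $P$ being special via Proposition~\ref{duality:mfld:prop}.

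The main obstacle — and the step deserving the most care — is the isotopy inside the complementary balls: one needs that an unknotted arc (or unknotted vertex-with-strands configuration) meeting the boundary sphere in $\le 3$ prescribed points can be isotoped, rel those boundary points, onto the cone from the center, so that the whole of $G$ simultaneously becomes a subcomplex. The ``unknotted'' clauses in Definition~\ref{simple_spine_def} are precisely what make this work — without them the arc could be knotted in the ball and no isotopy into $\calT^{(1)}$ would exist. One should also be mildly careful that the finitely many isotopies in the various balls and across the various $2$-cells are compatible at the shared intersection points on $S(P)^c$; this is straightforward since the intersection points are fixed throughout and each lies in a unique $2$-cell adjacent to exactly two balls. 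Everything else is the standard dictionary between special spines and triangulations already recorded in Proposition~\ref{duality:mfld:prop}, now decorated with the graph.
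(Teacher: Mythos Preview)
Your argument is correct and is exactly the natural one: reduce to Proposition~\ref{duality:mfld:prop} for the bare manifold, then use the unknottedness clauses in Definition~\ref{simple_spine_def} together with the ``at most one intersection per $2$-cell'' hypothesis to isotope $G$ onto the $1$-skeleton of the dual triangulation. The paper itself gives no proof of this proposition, calling it an ``easy result'' and leaving it to the reader; what you have written is precisely the routine verification the authors are gesturing at, so there is nothing to compare.
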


\subsection{Minimal spines}
A simple spine $P$ of a graph $(M,G)$ is called \emph{minimal}
if it has $c(M,G)$ vertices and no subset of $P$ is also a spine
of $(M,G)$. The success of the strategy based on complexity theory
(as outlined at the beginning of this section) for the enumeration of hyperbolic
graphs depends on the next three results. They require the
concept of $(0,1,2)$-irreducibility
defined in the introduction. The first one is part of
Theorem~\ref{hyp_par_irred},
the next two easily follow from~\cite[Theorem 2.6]{orb:compl}.

\begin{prop}
If $(M,G)$ is hyperbolic with parabolic meridians then $(M,G)$
is $(0,1,2)$-irreducible.
\end{prop}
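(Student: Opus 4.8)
The statement is the ``only if'' half of Theorem~\ref{hyp_par_irred}, where it was merely asserted that the listed conditions are necessary for hyperbolicity, so the plan is to supply that argument in the $(0,1,2)$-irreducibility part. The cleanest route is to deduce it from the topological criterion of Theorem~\ref{hyp_par_merid}: assuming $(M,G)$ is hyperbolic with parabolic meridians, the forward implication of that theorem gives that the exterior $X$ is irreducible, that the collection $P$ of edge-annuli $A_e$ and knot-tori $T_k$ is incompressible, and that $(X,P)$ carries no essential annulus.

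For $0$-irreducibility I would take a $2$-sphere $S\subset M$ disjoint from $G$, note that it lies in $\mathrm{int}(X)$ and is not parallel to a component of $\partial X$ (these are pairs of pants, annuli and tori, never spheres), and conclude from irreducibility of $X$ that $S$ bounds a ball inside $X\subset M\setminus G$. For $1$-irreducibility I would first observe that transversality to the $1$-complex $G$ forces the intersection points to lie in edge interiors; a single such point would make $S\cap X$ a properly embedded disk whose boundary is a meridian of the corresponding edge (the core of some $A_e$) or knot (an essential slope on some $T_k$), hence a compressing disk for $P$, contradicting incompressibility.

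The one step with real content is $2$-irreducibility. Given $S$ meeting $G$ transversely in two edge-interior points $p,q$, the surface $A:=S\cap X$ is a properly embedded annulus in $(X,P)$, each boundary curve a meridian and thus essential in its component of $P$; a compressing disk for $A$ would compress a meridian, contradicting incompressibility of $P$, so $A$ is incompressible. Since $(X,P)$ admits no essential annulus, $A$ must be boundary-parallel, cobounding a product region $W\cong A\times[0,1]$ with an annulus $A'\subset\partial X$ whose two boundary curves are meridians. I would then pin down $A'$: on $\partial X$ a meridian of one edge is never isotopic to a meridian of a different edge or of a knot --- the trivalent vertex supplies a ``third tube'' which prevents the two meridian disks from being parallel, and knot-tori are separate boundary components --- so two disjoint parallel meridians cobound an annulus only inside a single component of $P$. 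Hence $p$ and $q$ lie on a common edge (or knot), $A'$ is a subannulus of $A_e$ (resp.\ $T_k$), and peeling it off the regular neighbourhood of $G$ exposes the subarc $\gamma$ of $G$ between $p$ and $q$; then $S$ bounds the ball $B:=W\cup\mathrm{nbhd}(\gamma)$ with $B\cap G=\gamma$ a single arc, and the product structure on $W$ isotopes $\gamma$ onto $\partial B$, so $\gamma$ is unknotted. This is exactly $2$-irreducibility (and as a by-product it shows that no sphere meets $G$ in two points on distinct components).

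I expect the only delicate point to be this last case analysis: verifying that the boundary-parallel annulus $A'$ may be taken inside a single component of $P$ --- so that the enclosed piece of $G$ really is one arc rather than, say, a pair of arcs or an arc through a vertex --- and then that this arc is genuinely unknotted in $B$. Both facts are forced by the product structure on $W$ together with the non-isotopy of distinct meridians on $\partial X$, so the argument is routine; the $0$- and $1$-irreducibility steps are immediate once Theorem~\ref{hyp_par_merid} is available.
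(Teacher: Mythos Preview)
Your proposal is correct and is precisely the natural elaboration of what the paper leaves implicit: in the proof of Theorem~\ref{hyp_par_irred} the paper merely asserts that ``it is easy to check that the conditions listed are necessary for hyperbolicity,'' and the present proposition is then justified only by the remark that it ``is part of Theorem~\ref{hyp_par_irred}.'' Your route through Theorem~\ref{hyp_par_merid}---irreducibility of $X$ for $0$-irreducibility, incompressibility of $P$ for $1$-irreducibility, and the absence of essential annuli forcing the annulus $S\cap X$ to be boundary-parallel into a single component of $P$ for $2$-irreducibility---is exactly the intended argument, and the care you take in locating $A'$ inside one $A_e$ or $T_k$ (using that distinct meridians are non-isotopic on $\partial X$) is the one point that genuinely needs saying.
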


\begin{prop}\label{c=0:irred:prop}
The $(0,1,2)$-irreducible graphs $(M,G)$ with $c(M,G)=0$ are those described as follows
and illustrated in Figure~\ref{c=0:fig}:
\begin{itemize}
\item $M$ is either $S^3$, or $L(3,1)$, or $\matP^3$, and $G$ is either empty
or the core of a Heegaard solid torus of $M$;
\item $M$ is $S^3$ and $G$ is the trivially embedded $\theta$-graph.
\end{itemize}
\end{prop}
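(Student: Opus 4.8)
The plan is to show that the list is complete and that nothing on it is omitted, by a two-part argument. First I would establish that the four listed pairs really do have complexity $0$: each admits a simple spine with no vertices. For the pairs $(M,\emptyset)$ with $M\in\{S^3,L(3,1),\matP^3\}$ this is Matveev's original fact that these are exactly the closed orientable manifolds of complexity $0$; adding a core of a Heegaard solid torus costs nothing extra, since a spine of $M$ disjoint from the core (or meeting it in one unknotted arc, per Figure~\ref{balls:fig}) can be arranged. For the trivial $\theta$-graph in $S^3$ one exhibits an explicit vertex-free simple spine whose complementary balls meet $G$ as in Figure~\ref{balls:fig}. Conversely, and this is the bulk of the work, I would analyze what a simple spine $P$ of an arbitrary $(0,1,2)$-irreducible $(M,G)$ with $\#V(P)=0$ can look like. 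A simple polyhedron with no vertices has singular set consisting of disjoint circles (triple lines only, no true vertices), so $P$ is built from surfaces glued along circles in the local model of Figure~\ref{almost:special:fig}(2); after removing a collar of $P$ the complement is a union of balls each meeting $G$ in one of the three allowed configurations.

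The key step is then a case analysis driven by how many vertices of $G$ occur and how the complementary balls are attached. Since each ball carrying a vertex of $G$ contributes a $3$-punctured-sphere pattern to $P$, and $P$ has no vertices of its own, the combinatorics are extremely rigid: the number of vertices of $G$ is at most a small bound (in fact $0$ or $2$), because a vertex forces a piece of $P$ that, together with $(0,1,2)$-irreducibility, leaves very little room. When $G$ has no vertices, $G$ is a knot or link, and $(0,1,2)$-irreducibility forces $M$ to be irreducible and $G$ a knot whose exterior is (after the spine analysis) a solid torus — i.e.\ $G$ is a core of a genus-one Heegaard splitting — and Matveev's complexity-$0$ classification pins $M$ down to $S^3$, $L(3,1)$, $\matP^3$. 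When $G$ has vertices, the punctured-sphere constraint plus $2$-irreducibility forces exactly two vertices joined by three edges (a $\theta$-graph) with each edge an unknotted arc in the appropriate ball, and then $0$-irreducibility forces $M=S^3$ with $G$ trivially embedded. I would lean on Proposition~\ref{c=0:irred:prop} being already quoted here as "\cite[Theorem 2.6]{orb:compl}", so much of this can be cited rather than reproved; the role of the present proof is to assemble those inputs and check the irreducibility hypotheses are exactly what rule out the non-hyperbolic complexity-$0$ debris.

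The main obstacle I anticipate is the vertex-bearing case: controlling how the $3$-punctured-sphere "gadgets" forced by vertices of $G$ can be glued into a vertex-free simple polyhedron without creating a sphere that violates $1$- or $2$-irreducibility, and ruling out configurations with more than two vertices or with knotted edge-arcs. This is where the interplay between the local structure of $P$ (Figure~\ref{almost:special:fig}) and the global constraint "complement = balls meeting $G$ as in Figure~\ref{balls:fig}" has to be exploited carefully — essentially an Euler-characteristic and normal-surface bookkeeping argument on $S(P)$. Everything else — the existence direction, and the knot/link case — is routine given Matveev's theory and the quoted orbifold-complexity result.
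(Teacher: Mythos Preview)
The paper's treatment is far more concise than your sketch: it simply states that Proposition~\ref{c=0:irred:prop} (together with Theorem~\ref{good:min:spin:thm}) ``easily follow[s] from~\cite[Theorem 2.6]{orb:compl}'', with no further argument given. You correctly identify this citation yourself and recognise that it carries the weight; your surrounding sketch of how a self-contained proof would go --- analysing vertex-free simple polyhedra, splitting on whether $G$ has vertices, and invoking Matveev's complexity-$0$ classification for the manifold case --- is plausible and shows you understand the mechanism, but the paper does not reproduce any of it. One small wrinkle in your sketch: in the vertex-free case you jump to ``$G$ a knot whose exterior is a solid torus'' without separately disposing of the empty-graph and multi-component link cases, but these are easy once $(0,1,2)$-irreducibility is in hand. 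In short, your proposal is not wrong, and the obstacle you anticipate in the vertex-bearing case is a real one for a direct argument, but the paper simply outsources the entire classification to the cited orbifold-complexity theorem, and you may do the same.
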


    \begin{figure}
    \begin{center}
    \includegraphics[scale=1.0]{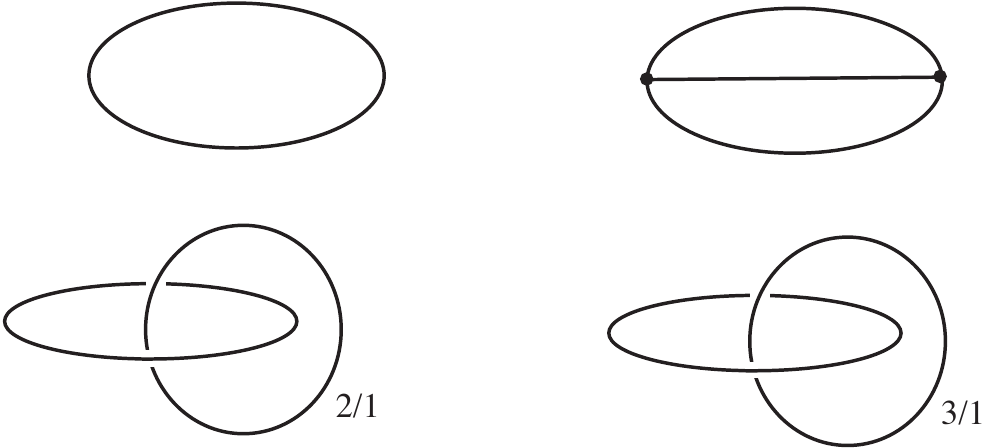}
    \mycap{\label{c=0:fig}
    \textbf{The $(0,1,2)$-irreducible graphs of complexity 0.} Here and below a knot component carrying
    a fractional label should be understood as a surgery instruction~\cite{Rolfsen}.
    In particular, it is not actually part of the graph.}
    \end{center}
    \end{figure}

\begin{thm}\label{good:min:spin:thm}
Let $(M,G)$ be a graph with $c(M,G)>0$. Then the following are equivalent:
\begin{itemize}
\item $(M,G)$ is $(0,1,2)$-irreducible;
\item $(M,G)$ admits a special minimal spine;
\item Every minimal spine of $(M,G)$ is special and dual to it
there is an efficient triangulation of $(M,G)$.
\end{itemize}
\end{thm}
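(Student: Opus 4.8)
The plan is to deduce the three equivalences from their orbifold counterpart, \cite[Theorem~2.6]{orb:compl}, by translating everything through the dictionary between trivalent graphs and orbifolds indicated at the start of this section. A pair $(M,G)$ underlies the orbifold $Q$ obtained by assigning labels to the edges and circles of $G$, and a simple spine of $(M,G)$ in the sense of Definition~\ref{simple_spine_def} is literally a simple spine of $Q$ in the sense of \cite{orb:compl}; the only role of the labels is to add to $\#V(P)$ a non-negative quantity depending on which edges of $G$ are crossed by $P$. Choosing all labels equal to $\infty$, so that the components of $G$ become cusps, kills that extra quantity, and minimising $\#V(P)$ over simple spines of the graph then coincides with minimising the orbifold complexity. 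With this choice one checks that the irreducibility hypothesis appearing in \cite[Theorem~2.6]{orb:compl} --- the absence of bad spherical $2$-suborbifolds, together with the requirement that spherical $2$-suborbifolds bound discal $3$-suborbifolds --- is exactly the conjunction of the three clauses defining $(0,1,2)$-irreducibility, the $2$-spheres meeting $G$ transversely in $0$, $1$ and $2$ points accounting for $0$-, $1$- and $2$-irreducibility respectively (the one-point sphere being the bad suborbifold $S^2(\infty)$); and one checks likewise that the efficient triangulations of $(M,G)$ are exactly the efficient triangulations of $Q$. Granting this, and since the hypothesis $c(M,G)>0$ rules out the exceptional pairs of Proposition~\ref{c=0:irred:prop}, the statement becomes precisely \cite[Theorem~2.6]{orb:compl}, with Proposition~\ref{duality:graph:prop} identifying which special spines are dual to triangulations of the pair.

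Unwinding the cited theorem, the argument is Matveev's. Denote the three clauses of the theorem by (1), (2), (3), in the order listed, and run the cycle (1)$\Rightarrow$(3)$\Rightarrow$(2)$\Rightarrow$(1). The step (3)$\Rightarrow$(2) is immediate, since $c(M,G)<\infty$ provides a minimal spine. For (1)$\Rightarrow$(3) one starts from a minimal simple spine $P$ and, assuming $P$ is not special, locates a point with inadmissible link, a non-disc $2$-stratum, or a closed component of the singular graph; a finite list of standard moves --- collapsing a free face, deleting a free arc, or ``bursting'' a complementary ball along a properly embedded disc meeting $G$ in at most one point --- then either strictly decreases $\#V(P)$, against minimality, or produces a $2$-sphere meeting $G$ in at most two points which violates $(0,1,2)$-irreducibility (here the control on how $G$ meets the complementary balls, Figure~\ref{balls:fig}, is what turns the local defect into the required sphere), or forces $c(M,G)=0$. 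Once $P$ is special, Proposition~\ref{duality:mfld:prop}, in the graph form of Proposition~\ref{duality:graph:prop}, yields a dual triangulation of $(M,G)$, and minimality forces each $2$-stratum to meet $G$ at most once and forbids any vertex of the triangulation beyond one at each vertex of $G$ and one on each knot component --- otherwise an economising move on the offending stratum or vertex would lower $\#V(P)$ --- so that the triangulation is efficient.

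There remains (2)$\Rightarrow$(1), which genuinely uses minimality: a reducible pair still carries plenty of special spines, just no minimal one. Arguing contrapositively, a pair failing $(0,1,2)$-irreducibility admits a ``bad'' sphere, which splits $(M,G)$ into strictly simpler pairs; using the additivity of complexity under such splittings (proved in \cite{orb:compl}, after \cite{Matv:AAM}) one assembles a minimal spine of $(M,G)$ from minimal spines of the pieces, and this spine fails to be special --- while the sharper assertion that \emph{no} minimal spine is special in this situation is part of \cite[Theorem~2.6]{orb:compl}. I expect the real obstacle to lie not in the Matveev-style bookkeeping above, which is routine, but in the dictionary of the first paragraph: one must check carefully that under the $\infty$-labelling the orbifold notion of irreducibility collapses onto $(0,1,2)$-irreducibility --- in particular that the forbidden one-point sphere is genuinely seen as a bad suborbifold rather than silently lost --- and that passing between $\#V(P)$-minimality and orbifold-complexity minimality introduces no slippage in the clause about efficient triangulations; the hypothesis $c(M,G)>0$ is exactly what removes the low-complexity exceptions of Proposition~\ref{c=0:irred:prop} where these equivalences break down.
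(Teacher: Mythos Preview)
Your approach is essentially the paper's own: the paper does not prove this theorem at all, but simply asserts that it (together with Proposition~\ref{c=0:irred:prop}) ``easily follows from~\cite[Theorem 2.6]{orb:compl}''. So you have correctly identified the intended source, and your second and third paragraphs sketching the Matveev-style argument go well beyond what the paper supplies.

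One technical point about your dictionary is worth flagging. You propose to pass to the orbifold setting by labelling every edge $\infty$, so that the label contribution to complexity vanishes. The paper's own framing is more direct: it says one obtains the graph theory by ``removing all references to edge orders and their contributions to the complexity'' --- that is, one simply strips the labels rather than sending them to $\infty$. This matters for your claim that a sphere meeting $G$ in one point is seen as the bad suborbifold $S^2(\infty)$: a once-punctured sphere is a disc, hence a perfectly good (open) $2$-manifold, not a bad orbifold. The correct translation is that for any choice of \emph{finite} labels $n\geqslant 2$, a one-point sphere becomes the genuinely bad $S^2(n)$, and a two-point sphere becomes the spherical $S^2(n,m)$; $(0,1,2)$-irreducibility of the pair is then equivalent to orbifold irreducibility for any (equivalently every) such labelling, while the minimal spines coincide because the spine notion itself is label-independent and the label contribution to complexity is a separate additive term. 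With that adjustment your reduction to \cite[Theorem~2.6]{orb:compl} is clean.
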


\subsection{Non-minimality criteria}
The following result was used for the enumeration of candidate triangulations
of $(0,1,2)$-irreducible graphs, as explained in more detail in the next section:

\begin{prop}\label{edge:cond:prop}
Let $\calT$ be a triangulation of a graph $(M,G)$, and let $P$ be the
special spine dual to $\calT$. Suppose that in $\calT$ there is
an edge not lying in $G$ and
incident to $i$ distinct tetrahedra, with $i\leqslant 3$.
Then $P$ is not minimal.
\end{prop}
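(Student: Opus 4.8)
The plan is to transfer the hypothesis to the dual spine, localise it near $e$, and reduce to a short list of elementary simplifications of $\calT$, checking that none of them disturbs $G$. By the duality of Propositions~\ref{duality:mfld:prop} and~\ref{duality:graph:prop}, the edge $e$ corresponds to a $2$-cell $D$ of the special spine $P$: the triangles of $\calT$ incident to $e$ correspond to the edges of $S(P)$ on the frontier of $D$, and the tetrahedra of $\calT$ incident to $e$ correspond to the vertices of $P$ on that frontier. Moreover each edge $\varepsilon$ of $\calT$ meets $P$ in a single point, lying in the $2$-cell dual to $\varepsilon$, so a $2$-cell of $P$ meets $G$ exactly when it is dual to an edge of $G$; the assumption $e\not\subset G$ therefore forces $D\cap G=\emptyset$. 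Thus $\overline D$ is a disc whose boundary circle runs through at most three distinct vertices of $P$. Writing $v\geqslant 1$ for the valence of $e$, equivalently the number of corners of $D$ counted with multiplicity, either the tetrahedra around $e$ are all distinct — and then $v\leqslant 3$ — or some tetrahedron is incident to $e$ at two of its edges (this happens in particular whenever $v\geqslant 4$), which on the spine side means that $\partial D$ visits one vertex of $P$ at least twice.

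From here the scheme is to construct out of $\calT$ a triangulation $\calT'$ of the \emph{same} pair $(M,G)$ with strictly fewer tetrahedra; by Proposition~\ref{duality:mfld:prop} its dual is a special spine of $(M,G)$ with fewer than $\#V(P)$ vertices, so $c(M,G)<\#V(P)$ and $P$ is not minimal. (In the few extreme configurations where no such $\calT'$ is available I would instead exhibit directly a proper subpolyhedron of $P$ which is still a spine of $(M,G)$, again violating minimality.) What makes each move legitimate for the \emph{pair} rather than only for $M$ is locality: every move below is supported on the ball obtained as the union of the tetrahedra incident to $e$, and it affects the $1$-skeleton only in a neighbourhood of $e$; since $G$ is disjoint from $e$, it is carried along unchanged, so $\calT'$ still contains $G$ in its $1$-skeleton. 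This observation is the only ingredient genuinely special to the setting of pairs.

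The case analysis then runs as follows. If $v=3$ and the three tetrahedra around $e$ are distinct, then their union is a triangular bipyramid and the classical $3\to2$ bistellar move re-triangulates it with two tetrahedra, dropping the count by one; this is the case actually used in the census. Every remaining possibility involves either an edge of valence $\leqslant 2$ or a self-adjacency (a tetrahedron incident to $e$ more than once), which together cover all cases with $v\geqslant 4$ as well as the low-valence cases with few distinct tetrahedra. In these the ball around $e$ is more degenerate, and I would invoke the corresponding elementary simplifications of Matveev's theory of triangulations and simple spines~\cite{Matv:AAM}: a tetrahedron two of whose faces meeting along $e$ are identified can be stripped off; an edge of valence $2$ creates a bigon region of $P$ that is removed by a lune-type move; and when $\partial D$ wraps around a single vertex one either frees a face of $P$, yielding a collapse onto a proper subspine, or detects a redundant piece of $P$ (an embedded sphere cutting off a ball disjoint from $G$) whose excision leaves a spine. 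In each instance $\#V(P)$ decreases or a proper subspine appears, contradicting minimality.

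The clean part of all this is the $3\to2$ move together with the locality argument showing $G$ plays no role; the bulk of the work, and the step I expect to be the real obstacle, is the bookkeeping in the degenerate cases — listing the possible cyclic patterns of the at most three tetrahedra around $e$, self-adjacencies included, and checking that each admits a simplification that remains valid in the category of triangulated pairs $(M,G)$. This finite check, which amounts to relativising Matveev's non-minimality lemma for $3$-manifold triangulations to pairs, should be conceptually routine but is the longest ingredient.
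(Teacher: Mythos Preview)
Your setup and the case $v=i=3$ (the $3\to 2$ bistellar move, together with the locality observation that $e\not\subset G$ keeps $G$ in the $1$-skeleton) match the paper exactly. The substantive difference is in how you organise the remaining cases.

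You try to remain in the category of triangulations, reserving spine-level arguments as a fallback for ``extreme configurations''; this is what generates the degenerate-case bookkeeping you flag as the main obstacle. The paper takes the opposite view: for $i=1,2$ it works directly on the spine from the outset, applying a $1\to0$ or $2\to0$ move that deletes the open $2$-cell $R$ of $P$ dual to $e$ (in the $1\to0$ case followed by collapsing a newly free face). The result is a simple spine of $(M,G)$ --- explicitly \emph{not} special, hence not dual to any triangulation --- with strictly fewer vertices; that it is still a spine of the pair follows immediately from $R\cap G=\emptyset$. Since both the complexity $c(M,G)$ and the notion of minimal spine are phrased in terms of simple spines, this already suffices, and the self-adjacency and low-valence case analysis you anticipate never arises. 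This is exactly the flexibility of simple versus special polyhedra advertised in the outline at the start of Section~\ref{compl:section}. So your route is not wrong, but it works harder than necessary: the shortcut is to abandon the goal of producing a smaller \emph{triangulation} and to be content with a smaller simple spine.

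One small imprecision in your write-up: $\overline D$ is the image of a closed disc under the characteristic map, but need not itself be an embedded disc, since $\partial D$ may be only immersed in $S(P)$.
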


\begin{proof}
We will show that we can perform a move on $P$ leading to
a simple spine of $(M,G)$ with fewer vertices than $P$.

For $i=3$ we do not even need to use spines, the move exists already
at the level of triangulations: it is the famous Matveev-Piergallini $3\to 2$
move~\cite{Matv-move,Pierg}  illustrated in Figure~\ref{3to2:fig}.
We only need to note that after the move we still have a triangulation
of $(M,G)$ because the edge that disappears with the move does
not lie in $G$.

    \begin{figure}[h]
    \includegraphics[scale=0.8]{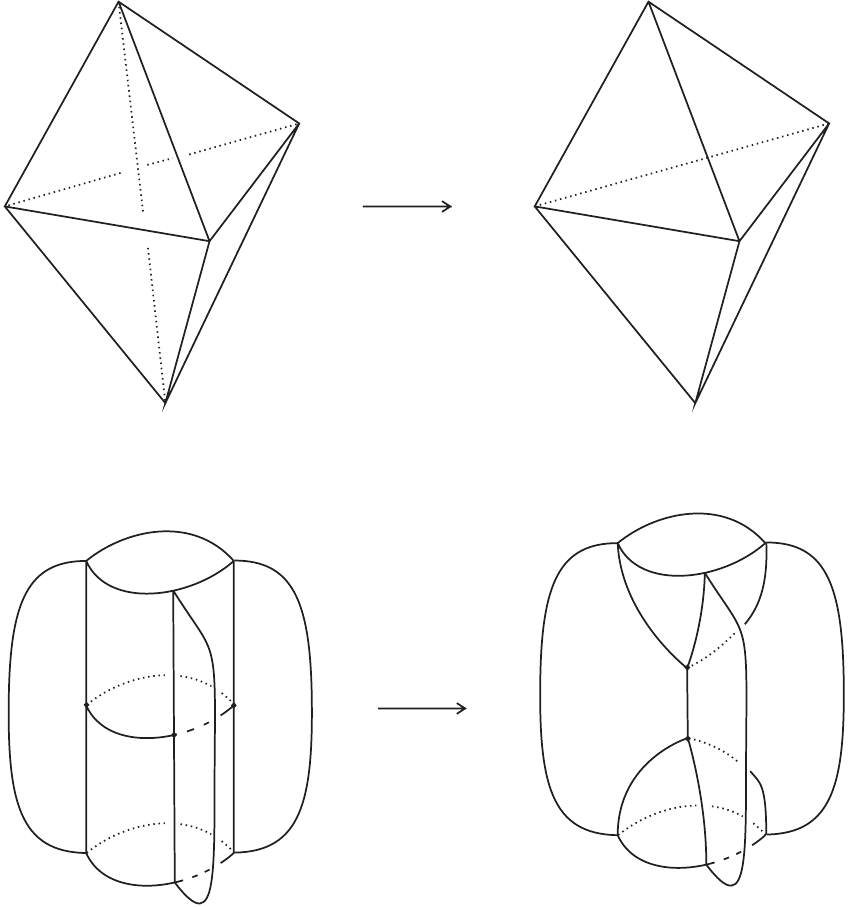}
    \begin{center}
    \mycap{\label{3to2:fig} The $3\to 2$ move on triangulations and its dual version for spines.}
    \end{center}
    \end{figure}

For $i=1,2$ we do need to use spines. The moves we apply (a $1\to
0$ and a $2\to 0$ move) are illustrated in
Figure~\ref{2to0:1to0:fig}.
Both moves involve the removal
of the component $R$ of $P\setminus S(P)$
dual to the edge of the statement, and the result of the move is still a spine
of $(M,G)$ because $G$ does not meet $R$. We note that the $2\to 0$ move
leads to an almost-special polyhedron, but it can create
a spine with
an annular non-singular component, in which case the spine is not dual to
a triangulation. The $1\to 0$ move gives a spine which is not
almost-special.
\end{proof}
    \begin{figure}[h]
    \includegraphics[scale=1.0]{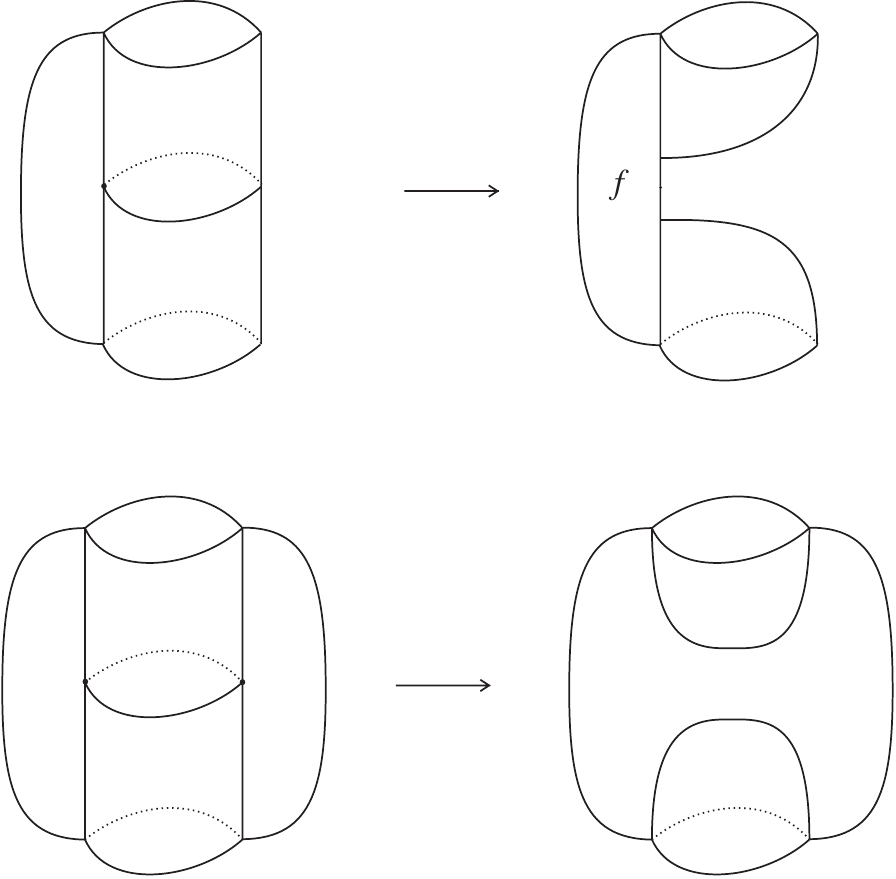}
    \begin{center}
\mycap{\label{2to0:1to0:fig}
\textbf{The $1\to 0$ and the $2\to 0$ moves on spines.}
Both these moves transform a special spine $P$ into a simple spine which is not
necessarily special. If $P$ has at least $2$ vertices, both moves
destroy at least 2 vertices of $P$: the $2\to 0$ move destroys
precisely two; the $1\to 0$ move can be completed by collapsing the
face $f$, which is necessarily adjacent to at least another vertex
of $P$ that disappears after the collapse.}
    \end{center}
    \end{figure}

\begin{rem}
{\em Sometimes the non-minimality criteria of the previous
proposition do not apply directly, but only after a modification
of the triangulation. For instance, a triangulation $T$ with $n$
tetrahedra may be transformed into one $T'$ with $n+1$ tetrahedra
via a $2\to 3$
move: if $T'$ contains an edge incident to $1$ or $2$
distinct tetrahedra, the dual spine $P'$ can be transformed into a
simple spine with at most $n-1$ vertices by applying one of the
moves in Figure~\ref{2to0:1to0:fig}. Therefore the original
triangulation $T$ is not minimal.}
\end{rem}

\subsection{Complexity of the complement}
Matveev's complexity~\cite{Matv:AAM} is defined for every compact
$3$-manifold, with or without boundary. The complement $X$
of an open regular neighbourhood of a graph $G$ in a
closed $3$-manifold $M$ therefore has
a complexity, which is related to $c(M,G)$ as follows.

\begin{prop} \label{complement:prop}
For any graph $(M,G)$ we have
$$c\big(X)\leqslant c(M,G).$$
If $(M,G)$ is $(0,1,2)$-irreducible with $c(M,G) \ne 0$ and $G\neq \emptyset$, then
$$c(X)<c(M,G).$$
\end{prop}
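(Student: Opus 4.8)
The first inequality $c(X)\leqslant c(M,G)$ should come essentially for free from the definitions: a simple spine $P$ of the pair $(M,G)$ has complement a disjoint union of balls, some of which meet $G$ in an unknotted arc or in a tripod. The plan is to enlarge $P$ slightly so that it becomes a spine of $X$ (the graph exterior). Concretely, one removes an open regular neighbourhood $U$ of $G$ from $M$ and notes that $P\setminus U$, suitably completed, is a simple polyhedron whose complement in $X$ is a union of balls; since $P$ meets $G$ transversely away from $V(P)$, this modification introduces no new vertices. Hence $P\setminus U$ (after a harmless collapse) is a simple spine of $X$ with at most $\#V(P)$ vertices, giving $c(X)\leqslant c(M,G)$. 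The only point requiring a little care is the behaviour near the balls that contain a vertex of $G$: there the spine must be completed with extra 2-cells (the ``lateral'' faces cutting off the tripod) so that the complement really becomes a collection of balls in $X$, but these extra faces can be attached without creating vertices of the simple polyhedron.

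For the strict inequality, the idea is to use the structure theory assembled earlier. Assume $(M,G)$ is $(0,1,2)$-irreducible with $c(M,G)=n>0$ and $G\neq\emptyset$. By Theorem~\ref{good:min:spin:thm}, a minimal spine $P$ of $(M,G)$ is special and dual to an efficient triangulation $\calT$ of $(M,G)$, with exactly one vertex at each vertex of $G$ and one on each knot component, and $G$ contained in the $1$-skeleton. First I would run the construction of the previous paragraph to get a simple spine $P'$ of $X$ with $\leqslant n$ vertices, and then argue that $P'$ is \emph{not} minimal as a spine of $X$: in passing from $(M,G)$ to $X$ we have ``opened up'' the neighbourhood of $G$, and near a point of $G\cap P$ or near a vertex of $G$ the local picture of $P'$ contains a face, or a disc region of $P'\setminus S(P')$, that can be collapsed. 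More precisely, since $\calT$ is efficient and $G\neq\emptyset$, there is at least one edge of $\calT$ lying in $G$; dually, the corresponding $2$-cell of $P$ meets $G$, and after removing $U$ this cell acquires a free boundary edge in $P'$ allowing a collapse that kills at least one vertex. Thus $c(X)\leqslant n-1<c(M,G)$.

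The main obstacle — and the step that needs the most attention — is the second one: making the collapse argument precise and checking that it genuinely removes a vertex rather than merely simplifying the $2$-stratum. The delicate case is a knot component of $G$, where the efficient triangulation has a single vertex on the circle and the local dual picture is a single ``bigon-like'' region rather than a face with an obvious free edge; here one must verify that opening the solid-torus neighbourhood still produces a collapsible subpolyhedron carrying a vertex. The cleanest route is probably to work on the triangulation side: the efficient triangulation $\calT$ of $(M,G)$ gives, after drilling $G$, an ideal triangulation of $X$ with the \emph{same} number of tetrahedra but with some material ``wasted'' along $\partial X$ (the truncated simplices along $G$), and one shows directly that this ideal triangulation is not minimal — e.g.\ because it contains an edge of low valence in the drilled manifold, so that Proposition~\ref{edge:cond:prop} (applied in $X$) or an elementary collapse strictly decreases the vertex count of the dual spine. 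I would also make sure the hypotheses $c(M,G)\neq 0$ and $G\neq\emptyset$ are used exactly where needed: they guarantee respectively that a minimal spine exists with the special structure of Theorem~\ref{good:min:spin:thm}, and that there is actually an edge of $\calT$ inside $G$ to exploit.
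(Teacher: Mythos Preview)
Your overall strategy matches the paper's, but you are making both halves considerably harder than necessary, and your stated worry in the second half is misplaced.

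For the inequality $c(X)\leqslant c(M,G)$, there is no need to remove a whole neighbourhood of $G$, to treat the tripod balls separately, or to attach any extra $2$-cells. Take a minimal simple spine $P$ of $(M,G)$; by transversality $G\cap P$ is a finite set of points lying in $P\setminus S(P)$, hence away from $V(P)$. Simply delete from $P$ a small open disc around each of these points. The resulting $P'\subset P$ is a simple polyhedron with $V(P')=V(P)$, and $X$ collapses onto $P'$ (the complement of $P'$ in $X$ is an open collar of $\partial X$, not a union of balls as you wrote). This already gives $c(X)\leqslant \#V(P)=c(M,G)$.

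For the strict inequality, your collapse idea is correct, but the argument is uniform and requires no case analysis for knot components. By Theorem~\ref{good:min:spin:thm} the minimal $P$ is special and, by Proposition~\ref{duality:graph:prop}, the $k\geqslant 1$ points of $G\cap P$ lie in $k$ \emph{distinct} disc components of $P\setminus S(P)$. Now remove those $k$ discs entirely (equivalently: starting from $P'$, collapse each punctured disc from its free boundary circle). Each such disc is dual to an edge of the efficient triangulation $\calT$, and since $c(M,G)>0$ every edge of $\calT$ is incident to at least one tetrahedron; dually, the boundary circuit of the disc passes through at least one vertex of $P$. Removing the disc downgrades that point from a vertex (link $=$ $1$-skeleton of the tetrahedron) to a triple point (link $=$ circle with a diameter). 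Hence the resulting simple spine of $X$ has strictly fewer vertices than $P$, giving $c(X)<c(M,G)$. There is no ``bigon'' obstruction in the knot case, and no need to invoke Proposition~\ref{edge:cond:prop} on the drilled manifold.
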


\begin{proof}
If $P$ is a minimal simple spine of $(M,G)$ then
the graph $G$ intersects $P$ in a finite number of points.
Removing from $P$ open regular
neighbourhoods of these points gives a simple
polyhedron $P'\subset P$ which is a spine of $X$
with the same vertices as $P$. Therefore $c\big(X)\leqslant c(M,G)$.

If $(M,G)$ is $(0,1,2)$-irreducible, $G\ne \emptyset$ and $c(M,G) \ne 0$,
then Theorem~\ref{good:min:spin:thm} shows that a minimal simple
spine $P$ of $(M,G)$ is special and $G \cap P$ consists of some $k \geqslant 1$ points
belonging to the interior of $k$ distinct disc components of $P\setminus S(P)$.
Removing these $k$ discs we get a simple spine of $X$ with
strictly fewer vertices than $P$.
\end{proof}

\begin{rem}\label{complement:rem}
{\em A compact $3$-manifold which admits a complete hyperbolic
metric with geodesic boundary and finite volume (after removing
the tori from its boundary) has complexity at least $2$,
see~\cite{Matv:AAM, CaHiWe, FriMaPe}. This explains why the first
hyperbolic knots $(M,G)$ have $c(M,G)\geqslant 3$ (see
Tables~\ref{hyp:num:tab} and~\ref{non:num:tab}). Analogously, the
first graphs $(M,G)$ whose complement is hyperbolic with geodesic boundary
must have $c(M,G)\geqslant 3$.
(In fact they have
complexity $5$, see
Subsection~\ref{geodesic:subsection}.)}
\end{rem}

\section{Computer programs and obstructions to hyperbolicity}\label{compu:section}

In this section we describe the Haskell code
we have written to enumerate triangulations, and the computer program \emph{Orb}
we have used to investigate hyperbolic structures. We also describe how non-hyperbolic graphs were identified (see also Section \ref{non:section} below).

\subsection{Enumeration of marked triangulations}\label{enu:section}
Thanks to Theorem~\ref{good:min:spin:thm} and the other results
stated in the previous section, the enumeration of
$(0,1,2)$-irreducible graphs of complexity $n$ can be performed by
listing all efficient triangulations with $n$ tetrahedra satisfying
some minimality criteria. This was done via a separate program,
written in Haskell~\cite{Has}, which suitably adapts the strategy
already used in similar censuses (\emph{e.g.}~\cite{MaPe:c9, Matv:last}).

A triangulation of a graph $(M,G)$ can be encoded as a
triangulation of $M$ with some marked edges. A triangulation here
is just a gluing of tetrahedra, which can be described via a connected
$4$-valent graph (the incidence graph of the gluing) having a
label on each edge encoding how the corresponding triangular faces
are identified (there are $3!=6$ possibilities).

A first count gives $c_n\cdot 6^{2n} = c_n\cdot 36^n$
triangulations to check, where $c_n$ is the number of $4$-valent
graphs with $n$ vertices (and $2n$ edges), shown in
Table~\ref{4_valent_graphs:table}. On each triangulation there are
$2^e = 2^{n+v}$ distinct markings of edges, where $e$ is the number of edges
and $v$ is the number of vertices in
the triangulation of $M$. Since there are at least $2$ triangles in the link of each vertex,
$v \leqslant 2n$, and $e \leqslant 3n$.
There are therefore up to $c_n\cdot
36^n \cdot 2^{3n} = c_n\cdot 288^n$ marked triangulations to check.
This number is already too big for $n=3$, so in order to simplify the
problem we used some tricks.

\begin{table}
\begin{center}
\begin{tabular}{c|ccccc}
$n$ & $1$  &  $2$ & $3$ & $4$ & $5$ \\ \hline
$c_n$ & $1$ & $2$ & $4$ & $10$ & $28$ \\
$c_n'$ & $1$ & $3$ & $5$ & $18$ & $56$
\end{tabular}
\end{center}
\mycap{\label{4_valent_graphs:table}
The number $c_n$ of $4$-valent graphs with $n$ vertices, and $c_n'$ of $4$-valent graphs with oriented vertices.}
\end{table}

We are only interested in orientable manifolds $M$. We can therefore orient each tetrahedron and
require the identifications of faces to be orientation-reversing. This reduces the number of possible
labels on edges from $6$ to $3$, and the number of
triangulations to $c_n'\cdot 3^{2n}=c_n'\cdot 9^n$, where $c_n'$ is the number of $4$-valent graphs with
``oriented'' vertices: each vertex has a fixed parity of orderings of the incident edges.
For a fixed $4$-valent graph $G$ with $n$ vertices, the vertices can be oriented
in $2^n$ different ways,
but up to the symmetries of $G$ the number of
distinct orientations
typically turns out to be very small.
This explains why $c_n'$ is actually much less than $2^n\cdot c_n$, as shown in the table.

We selected from the resulting list of triangulations only those
yielding closed manifolds. Finally, on each triangulation
we \emph{a priori} had $2^{e}$ distinct markings on edges to analyze.
Proposition~\ref{edge:cond:prop} was used to discard
many of these: in a triangulation dual to a minimal spine
an edge incident to at most $3$ distinct
tetrahedra is necessarily marked. It remained then to check which
markings give rise to efficient triangulations.

\subsection{``Orb''}
Hyperbolic structures were computed using the program \emph{Orb} written
by Damian Heard~\cite{He, orb}. This program builds on
ideas of Thurston, Weeks, Casson and others to find hyperbolic
structures and associated geometric invariants for a large class
of 3-dimensional manifolds and orbifolds. The program begins with
a triangulation of the space with the singular locus or graph contained in the
1-skeleton and tries to find shapes of generalized hyperbolic
tetrahedra (with vertices inside, on, or outside the sphere at
infinity) which fit together to give a hyperbolic structure.

The generalized hyperbolic tetrahedra are described by using one
parameter for each edge in the triangulation. For a general
tetrahedron a lift to Minkowski space is chosen, then the
parameters are Minkowski inner products of the vertex positions.
For compact tetrahedra,
each parameter is just the hyperbolic cosine of the edge length. For each ideal
vertex the lift to Minkowski space determines a horosphere centred
at the vertex; for each hyperideal vertex a geodesic plane
orthogonal to the incident faces is
determined. Then the edge parameters are simple functions of the
hyperbolic distances between these surfaces.

Given the edge parameters, all dihedral angles of the tetrahedra
are determined. Moreover the parameters give a global hyperbolic structure
if and only if the sum of the dihedral angles around each edge is $2\pi$
(or the desired cone angle, in the orbifold case).
This gives a system of equations that \emph{Orb} solves numerically using
Newton's method, starting with suitable regular generalized
tetrahedra as the initial guess.

Once a hyperbolic structure is found, \emph{Orb} can compute many
geometric invariants including volumes, the Kojima canonical
decompositions, and symmetry groups. This uses methods based on
ideas of Weeks~\cite{We1}, Ushijima~\cite{ushi:tilt} and
Frigerio-Petronio~\cite{FP}, too complicated to be reproduced here.

After computing hyperbolic structures numerically using \emph{Orb}, we
checked the correctness of the results by using Jeff Weeks'
program \emph{SnapPea}~\cite{snappea} to calculate complete hyperbolic
structures on the manifolds with torus cusps obtained by doubling
along all 3-punctured sphere boundary components.

Finally, we verified the results by using Oliver Goodman's program
\emph{Snap}~\cite{snap, CGHN} to find exact hyperbolic
structures. This provides a proof that the hyperbolic structures
are correct and allows us to compute associated arithmetic
invariants (including invariant trace fields), as already mentioned 
in Subsection \ref{arith_invar} above.

\subsection{Non-hyperbolic knots and links}
Many knots and links in the census turned out to be torus links in lens
spaces, see Subsection~\ref{non:knots:subsection} below. From
$c=3$, we then decided to rule out the non-hyperbolic knots and links from
our census (except for those in $S^3$ at $c=3$); this helped a
lot in simplifying the classification. Many non-hyperbolic knots and links were
easily identified by the following criterion:

\begin{rem} {\em If the complexity of the complement is at most $1$ then the
link is not hyperbolic by Remark~\ref{complement:rem}.
This holds for instance if there are $n$ tetrahedra and the marked
edge of the triangulation is incident to at least $n-1$ of them
(see the proof of Proposition~\ref{complement:prop}).}
\end{rem}

The remaining knots and links were shown to be non-hyperbolic by examining their
fundamental groups with the help of the following observations.

\begin{lemma} \label{group_1} Let $M$ be an orientable finite volume hyperbolic
$3$-manifold, and let $a,b,c\in \pi_1(M)$. Then
\begin{enumerate}
\item[(i)] if $[a^p,b^q]=1$ for some integers $p,q \ne 0$ then $[a,b]=1$,
\item[(ii)] if $[a,b]=1$ and $b=cac^{-1}$ then $a=b$.
\end{enumerate}
\end{lemma}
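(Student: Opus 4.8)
The key fact I would use is the standard one that a finite volume orientable hyperbolic $3$-manifold $M$ has $\pi_1(M)$ isomorphic to a discrete torsion-free subgroup $\Gamma$ of $\mathrm{PSL}(2,\C)$, and that every nontrivial element of $\Gamma$ is either parabolic or loxodromic, with the crucial property that two nontrivial elements commute if and only if they have the same fixed point set on $\partial\H^3$ (both parabolic fixing the same point, or both loxodromic with the same axis). In particular the centralizer of any nontrivial $\gamma\in\Gamma$ is abelian --- indeed it is the intersection of $\Gamma$ with the (abelian) stabilizer in $\mathrm{PSL}(2,\C)$ of the fixed point set of $\gamma$.

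For part (i), suppose $[a^p,b^q]=1$ with $p,q\neq 0$. If $a=1$ then trivially $[a,b]=1$, so assume $a\neq 1$; since $\Gamma$ is torsion-free, $a^p\neq 1$. Then $b^q$ lies in the centralizer $Z(a^p)$. I claim $Z(a^p)=Z(a)$: an element commuting with $a$ clearly commutes with $a^p$, and conversely an element $g$ with $ga^pg^{-1}=a^p$ has $gag^{-1}$ commuting with $a^p$ hence (by the fixed-point description above) $gag^{-1}$ has the same fixed point set as $a^p$, which is the same as that of $a$; so $g$ normalizes the cyclic-by-discreteness stabilizer and, because $gag^{-1}$ and $a$ are loxodromic (or parabolic) with the same axis (or fixed point) and the abelian stabilizer is either $\C$ or $\C^*$, one concludes $gag^{-1}=a^{\pm1}$, and the sign must be $+$ (e.g.\ by comparing translation lengths/derivatives, or because an orientation-preserving isometry cannot invert a loxodromic axis while fixing it pointwise-at-infinity with the correct orientation). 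Hence $g\in Z(a)$. So $b^q\in Z(a)$, which is abelian; but then $b$ also centralizes $a$: indeed $b Z(a) b^{-1}=Z(bab^{-1})$, and $bab^{-1}$ commutes with $b^q\in Z(a)$, forcing (same argument) $bab^{-1}=a^{\pm1}$, again $+$, so $b\in Z(a)$, i.e. $[a,b]=1$. For part (ii), if $[a,b]=1$ and $b=cac^{-1}$, then conjugation by $c$ preserves the common fixed point set of the commuting pair; comparing the loxodromic translation lengths (or the parabolic ``sizes'' via the cusp shape) shows $a$ and $b=cac^{-1}$ have equal translation length, and since they generate a common cyclic-or-rank-$2$-parabolic subgroup with the same axis, $b=a^{\pm1}$; the minus sign is again impossible for an orientation-preserving isometry conjugating a loxodromic to its own inverse, so $b=a$.

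The main obstacle, and the only place requiring care, is ruling out the ``$-1$ sign'', i.e.\ showing that an orientation-preserving isometry $g$ with $gag^{-1}$ and $a$ sharing an axis cannot send $a\mapsto a^{-1}$. The clean way is to invoke the elementary/commensurability classification of elementary Kleinian groups: the stabilizer of a loxodromic axis in $\mathrm{Isom}^+(\H^3)$ splits as $\C^*$ (loxodromic/parabolic along the axis) extended by the order-two ``flip'' swapping the two endpoints; but that flip is precisely a $\pi$-rotation about a geodesic meeting the axis, which in $\mathrm{PSL}(2,\C)$ has trace $0$ and is elliptic of order $2$ --- impossible in a torsion-free group. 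For the parabolic case the stabilizer is just $\C$, abelian, so no sign issue arises at all. Once this sign issue is dispatched, both statements follow from the two bullet points: (a) nontrivial commuting elements share fixed-point sets, and (b) the stabilizer of such a fixed-point set in a torsion-free discrete group is abelian and embeds in $\C$ or $\C^*$, so distinct powers have distinct translation data and a conjugacy $a\mapsto a^{\pm1}$ forces $a\mapsto a$.
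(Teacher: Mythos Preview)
Your argument is correct and rests on the same fundamental facts the paper uses: nontrivial elements of $\pi_1(M)\subset\mathrm{PSL}(2,\C)$ are parabolic or loxodromic, and two such elements commute if and only if they share the same fixed-point set on $\partial\H^3$. The execution, however, is considerably more circuitous than necessary. For (i) the paper simply observes that $a$ and $a^p$ have the same fixed set (trivially), as do $b$ and $b^q$; since $a^p$ and $b^q$ commute they share a fixed set, so $a$ and $b$ do too and hence commute. Your detour through $Z(a^p)=Z(a)$ and the ``$gag^{-1}=a^{\pm1}$'' analysis is valid but entirely avoidable. For (ii) the paper's phrasing is also cleaner: once $c$ takes the common fixed set $F$ to itself, the observation ``$c$ is not elliptic, so $c$ must fix each point of $F$'' (equivalent to your torsion-free/order-$2$-flip argument) immediately gives that $c$ commutes with $a$, hence $b=cac^{-1}=a$, with no need to compare translation lengths or pass through $b=a^{\pm1}$.
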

\begin{proof}
The results are clear if $a, b$ or $c$ is the identity, so we may
assume that $a,b$ and $c$ correspond to loxodromic or parabolic
isometries of $\H^3$.

In part (i), the elements $a^p,b^q$ must have the same axis or fixed
point at $\infty$. Since $p,q\ne0$  the same is true for $a$ and
$b$, so $a$ and $b$ commute.

In part (ii), $a$ and $b$ have the same fixed point set $F$ on the
sphere at infinity, and $c$ takes $F$ to itself. Since $c$ is not
elliptic, it must fix each point of $F$. Thus $c$ has the same
axis or fixed point at $\infty$ as $a$ and $b$, so it commutes with
them.\end{proof}

\begin{lemma} \label{group_2} Let $M$ be an orientable finite volume hyperbolic $3$-manifold.
Then $\pi_1(M)$ cannot have a presentation of the
form
\begin{enumerate}\item[(i)] $\langle a,b \mid a^n(a^pb^q)^k=1 \rangle$
where $k,n,p,q$ are integers with $k,n,q\ne0$, or
\item[(ii)]
$\langle a,b \mid a^2b^{-1}a^{-1}b^2a^{-1}b^{-1}=1 \rangle$.
\end{enumerate}
\end{lemma}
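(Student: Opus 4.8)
The plan is to show that each of the two presentations in the statement defines a group that cannot be the fundamental group of a finite-volume hyperbolic $3$-manifold, by exhibiting an internal relation among the generators of exactly the type forbidden by Lemma~\ref{group_1}. In both cases the group is a one-relator group on two generators $a,b$, so the group is non-trivial and in fact infinite; the point is to produce, inside it, either a pair of non-commuting elements whose powers commute, or a commuting pair that are conjugate but unequal --- contradicting parts (i) or (ii) of Lemma~\ref{group_1}.

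For part (i), with relator $a^n(a^pb^q)^k=1$, I would set $w=a^pb^q$ and rewrite the relation as $a^n = w^{-k}$. Then $a^n$ commutes with $w$ (being a power of $w$), so $[a^n,(a^pb^q)]=1$; since $n\neq 0$ and $q\neq 0$ guarantee $a^n$ and $w$ are both non-trivial in the hyperbolic group (were $M$ hyperbolic), part (i) of Lemma~\ref{group_1} forces $[a,w]=1$, i.e. $a$ commutes with $a^pb^q$, hence $a$ commutes with $b^q$, hence (applying Lemma~\ref{group_1}(i) once more, using $q\neq 0$) $a$ commutes with $b$. But then the group is abelian, generated by two commuting elements subject to a single relation, so it is a quotient of $\Z^2$ and hence has abelianization of rank $\geqslant 1$ containing $\Z$; such a group has non-trivial center and cannot be the fundamental group of a finite-volume hyperbolic $3$-manifold (or, more directly, an abelian group acting on $\H^3$ must be elementary, contradicting discreteness and finite covolume unless it is trivial). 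The only subtlety is ruling out the degenerate cases where $a$ or $w$ is trivial: if $a=1$ then the relation reads $b^{qk}=1$, so $b$ has finite order, impossible in a torsion-free hyperbolic group unless $b=1$ too, and then $\pi_1(M)=1$; if $w=1$ then $a^n=1$, again forcing torsion. So these degenerate cases are handled by torsion-freeness of hyperbolic $3$-manifold groups.

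For part (ii), with relator $a^2b^{-1}a^{-1}b^2a^{-1}b^{-1}=1$, the goal is to massage the single relation into a statement of the form ``$x$ and $y$ commute and $y$ is conjugate to $x$ but $x\neq y$'', then invoke Lemma~\ref{group_1}(ii). I would first rewrite $a^2b^{-1}a^{-1} = bab^{-1}b^{-1}\cdots$ --- more carefully, bring the relation to the form $a^2 b^{-1} = b a b^{-1} b^{-1} \cdot (\text{something})$ and look for a conjugation pattern; concretely, the relation $a^2b^{-1}a^{-1}b^2a^{-1}b^{-1}=1$ can be rearranged as $a^2b^{-1}a^{-1}b = b a b^{-1} a b^{-1} \cdot b$, or better, one isolates a commutator: set $c_1 = a$ and $c_2 = bab^{-1}$ (a conjugate of $a$) and try to read off from the relation that $c_1$ and $c_2$ commute. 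If the relation can be put in the form $[a, bab^{-1}] = 1$ after substitution $b = $ (the expression suggested by the word), then part (ii) with $a=c_1$, $b=c_2$, $c=b$ gives $a = bab^{-1}$, i.e. $ab = ba$, collapsing the group to an abelian (hence non-hyperbolic) one as in part (i).

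The main obstacle I anticipate is the bookkeeping in part (ii): the relator is a specific word of length $8$ in $a^{\pm 1}, b^{\pm 1}$, and finding the right substitution or sequence of elementary rewrites that exposes a hidden ``commuting-conjugates'' relation is not automatic --- it requires experimenting with which sub-words to name and in what order to cyclically permute the relator (a relator may be cyclically permuted and inverted freely). I would proceed by trial: cyclically rotate the relator to each of its $8$ positions, invert if helpful, and in each case try to read it as $x y x^{-1} y^{-1} = $ (trivial or something conjugate), keeping track that the generators of the abstract presentation are only $a$ and $b$, so any ``new'' generator must be an explicit word in $a,b$. Once the algebraic identity among $a$, $b$, and suitable conjugates is found, the reduction to Lemma~\ref{group_1} is immediate, and the degenerate cases ($a$ or $b$ trivial) are dispatched exactly as in part~(i) using torsion-freeness. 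I expect the whole argument for (ii) to come down to a single clever rewriting step followed by one application of Lemma~\ref{group_1}(ii).
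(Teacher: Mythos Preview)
Your argument for part~(i) is correct and is essentially identical to the paper's: from $a^n=(a^pb^q)^{-k}$ you get $[a^n,a^pb^q]=1$, then Lemma~\ref{group_1}(i) twice gives $[a,b]=1$, so the group is abelian. The extra discussion of degenerate cases is harmless but unnecessary, since Lemma~\ref{group_1} already covers trivial elements and ``abelian implies non-hyperbolic'' needs no further case analysis.

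For part~(ii) your strategy is the right one and coincides with the paper's, but your execution is incomplete and your trial substitution does not work. The relator $a^2b^{-1}a^{-1}b^2a^{-1}b^{-1}$ is \emph{not} conjugate to $[a,bab^{-1}]$ (you can check that no cyclic rotation or inversion of the relator matches $abab^{-1}a^{-1}ba^{-1}b^{-1}$). The substitution that actually works --- and which your proposed systematic search by cyclic rotation would have found at the very first step --- is
\[
x=ab^{-1},\qquad y=a^{-1}b.
\]
Conjugating the relator by $a^{-1}$ (equivalently, cyclically rotating it by one letter) gives
\[
ab^{-1}\cdot a^{-1}b\cdot ba^{-1}\cdot b^{-1}a \;=\; x\,y\,x^{-1}y^{-1},
\]
so the single relation is exactly $[x,y]=1$. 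Eliminating $b=x^{-1}a$ one finds $y=a^{-1}x^{-1}a$, i.e.\ $x=ay^{-1}a^{-1}$. Thus $x$ commutes with $y$ (hence with $y^{-1}$) and is conjugate to $y^{-1}$; Lemma~\ref{group_1}(ii) forces $x=y^{-1}$, whence $x=axa^{-1}$ and $[a,x]=1$. Since $x=ab^{-1}$, this gives $[a,b]=1$ and the group is abelian, as in part~(i). Note the small twist you should anticipate: the commuting pair produced is $(x,y)$ but the \emph{conjugate} pair is $(x,y^{-1})$, so the conclusion from Lemma~\ref{group_1}(ii) is $x=y^{-1}$ rather than $x=y$; the collapse to abelian then comes from combining this with the conjugacy relation, not directly from ``$x=y$''.
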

\begin{proof} (i) If $a^n(a^pb^q)^k=1$, then $[a^n,a^pb^q]=1$ by part (i) of
Lemma~\ref{group_1}.
Hence $[a^n,b^q]=1$ and $[a,b]=1$,
again by part (i) of
Lemma~\ref{group_1}. So the group would be abelian, which
is impossible.

(ii) The group has a presentation $$\langle a,b,x,y \mid x=ab^{-1},
y=a^{-1}b,[x,y]=1 \rangle.$$ We can rewrite this as $$\langle a,x,y
\mid x=ay^{-1}a^{-1}, [x,y]=1 \rangle.$$ Hence $x=y^{-1}$ by part (ii) of Lemma
\ref{group_1} and $[a,x]=1$. So the group would be abelian, which is
again impossible.
\end{proof}

Among the knots and links up to complexity 4 for which Orb did not
find a hyperbolic structure, all but one of the complements had
a fundamental group with presentation of the form
$\langle a, b \mid [a^n, b^m] = 1\rangle$, or $\langle a,b \mid
a^n(a^pb^q)^k=1 \rangle$. These all correspond to non-hyperbolic links by the
Lemmas above. The one remaining knot had a presentation as in
part (ii) of Lemma~\ref{group_2}, so is also non-hyperbolic.

\subsection{Non-hyperbolic graphs}
For graphs with at least one vertex, we first eliminated all triangulations whose
dual spines had non-minimal complexity hence were
either reducible or occurred earlier in our list. This left a handful of
examples for which \emph{Orb}  failed to find a hyperbolic structure.
These were first examined using Jeff Weeks' program SnapPea, by
constructing triangulations of the manifolds with torus cusps
obtained by doubling along the 3-punctured sphere boundary components.
We used SnapPea's ``splitting'' function to look for incompressible
Klein bottles and tori in the doubles. This suggested that
incompressible Klein bottles were present in the original graph
complements.
We then verified this and showed that these examples were indeed non-hyperbolic
by theoretical means, as explained below in Section~\ref{non:section}.

\section{Hyperbolic census details}\label{res:section}
In this section we will expand on the information given in
Table~\ref{hyp:num:tab}, providing details of all the 123
hyperbolic graphs up to complexity 5. Pictures of the hyperbolic graphs
up to complexity 4
will be shown in Section~\ref{fig:section}.

\subsection{Name conventions}
For future reference, we have chosen a name for each of the graphs we have found. The name has the form
$$ng\_\,c\_\,i$$
where $n$ is the number of vertices of the graph, $g$ is a string
describing the abstract graph type, $c$ is the complexity, and $i$
is an index (starting from $1$ for any given $ng\_\,c$). We have
found in our hyperbolic census only 6 graph types, described above
in Figure~\ref{g:types:fig}, so a string of one letter only (or the
empty string, for knots) was sufficient to identify them.
For graphs with 2 vertices, the letters $t$ and $h$ were
suggested by the common names ``$\theta$-graph'' and
``handcuffs''. The choice of letters was arbitrary for graphs with $4$ vertices.

\subsection{Organization of tables}
We will give separate tables for
$\theta$-graphs, handcuffs, $4$-vertex graphs, and knots.
Within each table, graphs are always arranged in increasing order
of their hyperbolic volumes.
For graphs having vertices, the columns of the
tables respectively contain:
\begin{enumerate}
\item The name of the graph $(M,G)$.
\item The volume of the hyperbolic structure with parabolic meridians on $M\setminus G$.
\item A description of the cells of the Kojima
canonical decomposition for this structure.
When all these cells are tetrahedra we simply indicate their number,
otherwise we add an asterisk in the table and provide additional information separately.
\item The symmetry group of $(M,G)$, with $D_n$ denoting the
dihedral group with $2n$ elements.
\item Whether $(M,G)$ is chiral (c) or amphichiral (a). 
\item The name of the underlying space $M$. This is almost always a lens space;
otherwise it is a Seifert fibred space which we describe in
the usual way (as in~\cite[p.~406]{Matv:book}).
\item[(7)-(9)] The degree, signature and discriminant of the
invariant trace field. Details of minimal polynomials for the fields are available on the web
at {\tt www.ms.unimelb.edu.au/\~{}snap/knotted\_graphs.html}.
\item[(10)] Whether all traces of group elements are algebraic integers.
\item[(11)] Whether the group is arithmetic (after doubling to obtain a finite covolume group).
\end{enumerate}

{
\begin{table}
\begin{center}
\begin{tabular}{c||c|c|c|c|c|c|c|c|c|c}
name & volume &  (K) & sym & a/c & space & deg & sig & disc & int & ar \\ \hline\hline
$2t\_\,2\_\,1 $ & 5.333489567    & 3 & $D_2$& c & $S^3$ & 2 & 0, 1 & $-$7 &Y   & Y\\ \hline
$2t\_\,2\_\,2 $ & 5.333489567    & 3 & $D_6$& c & $L$(3,1) & 2 & 0, 1 & $-$7 &Y& Y\\ \hline
$2t\_\,3\_\,1 $ & 6.354586557    & 4 & $D_2$& c & $\matP^3$ & 3 & 1, 1 & $-$44 &  Y & N\\ \hline
$2t\_\,3\_\,2 $ & 6.354586557    & 4 & $D_2$& c & $L$(4,1) & 3 & 1, 1 & $-$44 & Y & N\\ \hline
$2t\_\,3\_\,3 $ & 6.551743288    & 7 & $D_2$& c & $S^3$ & 3 & 1, 1 & $-$107 & Y & N\\ \hline
$2t\_\,3\_\,4 $ & 6.551743288    & 7 & $D_2$& c & $L$(5,2) & 3 & 1, 1 & $-$107 & Y & N\\ \hline
$2t\_\,4\_\,1 $ & 6.755194816    & 5 & $D_2$& c & $L$(3,1) & 4 & 0, 2 & 2917 & Y & N\\ \hline
$2t\_\,4\_\,2 $ & 6.755194816    & 5 & $D_2$& c & $L$(5,1) & 4 & 0, 2 & 2917 & Y & N\\ \hline
$2t\_\,4\_\,3 $ & 6.927377112    & 11 & $D_2$& c & $S^3$ & 4 & 0, 2 & 1929 & Y & N\\ \hline
$2t\_\,4\_\,4 $ & 6.927377112    & 11 & $D_2$& c & $L$(7,3) & 4 & 0, 2 & 1929 & Y & N \\ \hline
$2t\_\,5\_\,1 $ & 6.952347978    & 6 & $D_2$& c & $L$(4,1) & 5 & 1, 2 & 7684 & Y & N\\ \hline
$2t\_\,5\_\,2 $ & 6.952347978    & 6 & $D_2$& c & $L$(6,1) & 5 & 1, 2 & 7684 & Y & N\\ \hline
$2t\_\,4\_\,5 $ & 6.987763199    & 7 & $D_2$& c & $L$(3,1) & 5 & 1, 2 & 77041 & Y & N\\ \hline
$2t\_\,4\_\,6 $ & 6.987763199    & 7 & $D_2$& c & $L$(7,2) & 5 & 1, 2 & 77041 & Y & N\\ \hline
$2t\_\,4\_\,7 $ & 7.035521457    & 8 & $D_2$& c & $\matP^3$ & 5 & 1, 2 & 5584 & Y & N\\ \hline
$2t\_\,4\_\,8 $ & 7.035521457    & 8 & $D_2$& c & $L$(8,3) & 5 & 1, 2 & 5584 & Y &  N\\ \hline
$2t\_\,5\_\,3 $ & 7.084790037    & 15 & $D_2$& c & $S^3$ & 5 & 1, 2 & 49697 & Y & N\\ \hline
$2t\_\,5\_\,4 $ & 7.084790037    & 15 &    $D_2$& c & $L$(9,2) & 5 & 1, 2 & 49697 & Y & N\\ \hline
$2t\_\,5\_\,5 $ & 7.142157274    & 9 & $D_2$& c & $L$(5,2) & 7 & 1, 3 & $-$123782683 & Y &N\\ \hline
$2t\_\,5\_\,6 $ & 7.142157274    & 9 & $D_2$& c & $L$(9,2) & 7 & 1, 3 & $-$123782683 & Y &N\\ \hline
$2t\_\,5\_\,7 $ & 7.157517365    & 8 & $D_2$& c & $L$(4,1) & 7 &  1, 3 & $-$2369276 & Y & N\\ \hline
$2t\_\,5\_\,8 $ & 7.157517365    & 8 & $D_2$& c & $L$(10,3) & 7 &  1, 3 & $-$2369276 & Y & N \\ \hline
$2t\_\,5\_\,9 $ & 7.175425922    & 9 & $D_2$& c & $L$(3,1) & 7 & 1, 3 & $-$88148831 & Y & N\\ \hline
$2t\_\,5\_\,10 $ & 7.175425922    & 9 & $D_2$& c & $L$(11,3) & 7 & 1, 3 & $-$88148831 & Y & N\\ \hline
$2t\_\,5\_\,11 $ & 7.192635929    & 11 & $D_2$& c & $L$(5,2) & 8 & 0, 4 & 5442461517 & Y &N\\ \hline
$2t\_\,5\_\,12 $ & 7.192635929    & 11 & $D_2$& c & $L$(11,3) & 8 & 0, 4 & 5442461517 & Y &N\\ \hline
$2t\_\,5\_\,13$ & 7.193764490    & 12 & $D_2$& c & $\matP^3$ & 7 & 1, 3 & $-$1523968 & Y &N\\ \hline
$2t\_\,5\_\,14$ & 7.193764490    & 12 & $D_2$& c & $L$(12,5) & 7 & 1, 3 & $-$1523968 & Y &N\\ \hline
$2t\_\,5\_\,15$ & 7.216515907    & 11 & $D_2$& c & $L$(3,1) & 8 & 0, 4 & 3679703653 & Y &N\\ \hline
$2t\_\,5\_\,16$ & 7.216515907    & 11 & $D_2$& c & $L$(13,5) & 8 & 0, 4 & 3679703653 & Y &N \\ \hline
$2t\_\,4\_\,9 $ & 7.327724753    & 4 & $D_2$ & a &  $S^2\times S^1$ & 2 & 0, 1 & $-$4 & Y & Y\\ \hline
$2t\_\,4\_\,10$ & 7.517689896    & 6 & $D_2$& c & $L$(3,1) & 3 & 1, 1&  -104 & Y & N\\ \hline
$2t\_\,4\_\,11$ & 7.706911803    & 5 & $D_2$& c & $S^3$ & 3 & 1, 1& $-$59 & Y & N\\ \hline
$2t\_\,4\_\,12$ & 7.706911803    & 5 & $D_2$& c & $L$(5,1) & 3 & 1, 1& $-$59 & Y & N\\ \hline
$2t\_\,4\_\,13$ & 7.867901276    & 7 & $\matZ_2$& c & $L$(7,2) & 5 &  3, 1 &  -112919 & Y & N\\ \hline
$2t\_\,4\_\,14$ & 7.940579248    & 9 & $D_2$& c & $L$(8,3) & 3  & 1, 1&  -76 & Y & N\\ \hline
$2t\_\,4\_\,15$ & 7.940579248    & 9 & $D_6$& c & $S^3/Q_8$ & 3  & 1, 1&  -76 & Y & N\\ \hline
$2t\_\,4\_\,16$ & 8.000234350    & 4 & $D_2$& c & $\matP^3$ & 2  &0, 1& $-$7 & Y & Y\\ \hline
$2t\_\,5\_\,17$ & 8.087973789    & 5 & $\matZ_2$& c & $S^3$ & 4 & 2, 1& $-$6724 & Y & N\\ \hline
$2t\_\,5\_\,18$ & 8.195703083    & 7 & $\matZ_2$& c & $L$(5,2) & 5 & 1, 2& 65516 & Y & N\\ \hline
$2t\_\,5\_\,19$ & 8.233665208    & 6 & $\matZ_2$& c & $L$(6,1) & 6 & 2, 2& 1738384 & Y& N \\ \hline
$2t\_\,5\_\,20$ & 8.338374585    & 8 & $\matZ_2$& c & $L$(9,2) & 6 & 2, 2& 2463644 & Y & N\\ \hline
$2t\_\,5\_\,21$ & 8.355502146    & 8 & $\matZ_2$& c & $S^3$ & 4 & 0, 2&  3173 & Y & N\\ \hline
$2t\_\,4\_\,17$ & 8.355502146    & 6 & $\matZ_2$& c & $S^3$ & 4 & 0, 2&  3173 & Y & N \\ \hline
$2t\_\,5\_\,22$ & 8.372209945    & 8 & $\matZ_2$& c & $L$(10,3) &7 & 3, 2& 87357184 &Y & N\\ \hline
$2t\_\,5\_\,23$ & 8.388819035    & 10 & $\matZ_2$& c & $L$(4,1) & 5 & 1, 2& 26084 & Y & N
\end{tabular}
\end{center}
\mycap{\label{2t:data:tab:1}
\textbf{Information on hyperbolic $\theta$-graphs up to complexity 5, table 1 of 2.}
Here $Q_8$ denotes the quaternionic group of order $8$ and
$S^3/Q_8$ is the Seifert fibred space $\big(S^2;(2,-1),(2,1),(2,1)\big)$. }
\end{table}}

{
\begin{table}
\begin{center}
\begin{tabular}{c||c|c|c|c|c|c|c|c|c|c}
name & volume &  (K) & sym & a/c & space & deg & sig & disc & int & ar \\ \hline\hline
$2t\_\,5\_\,24$ & 8.403864479    & 10 & $\matZ_2$& c & $L$(11,3) & 7 & 3, 2& 186794473 & Y & N \\ \hline
$2t\_\,5\_\,25$ & 8.487060022    & 8 & $\matZ_2$& c & $L$(9,2) & 8 & 4, 2& 17112324248 & Y & N\\ \hline
$2t\_\,5\_\,26$ & 8.527312899    & 10 & $\matZ_2$& c & $L$(11,3) & 9 &5, 2& 5328053407637 & Y & N\\ \hline
$2t\_\,5\_\,27$ & 8.546347793    & 11 & $\matZ_2$& c & $L$(12,5) & 8 & 4, 2& 2498992192 & Y & N \\ \hline
$2t\_\,5\_\,28$ & 8.565387019    & 12 & $\matZ_2$& c & $L$(13,5) & 9 & 5, 2 & 1944699708173 & Y & N\\ \hline
$2t\_\,5\_\,29$ & 8.612415201    & 1* & $D_2$& c & $L$(4,1) & 4 &  2, 1 & $-$400 & Y & N\\ \hline
$2t\_\,5\_\,30$ & 8.778658803    & 9 & $D_2$& c & $\matP^3$ & 5 & 1, 2 &  15856 & Y& N\\ \hline
$2t\_\,5\_\,31$ & 8.778658803    & 9 & $D_2$& c & $S^3/Q_{12}$  & 5 & 1, 2 &  15856 & Y& N \\ \hline
$2t\_\,5\_\,32$ & 8.793345604    & 7 & $D_2$& c & $S^3$ & 4 & 0, 2 & 257 & Y & N \\ \hline
$2t\_\,5\_\,33$ & 8.806310033    & 8 & $D_2$& c & $L$(8,3)  & 4 & 2, 1 & $-$1968 & Y & N\\ \hline
$2t\_\,5\_\,34$ & 8.908747390    & 11 & $D_2$& c & $L$(3,1) & 5 & 1, 2 & 31048 & Y & N\\ \hline
$2t\_\,4\_\,18$ & 8.929317823    & 6 & $D_2$& c & $S^3$  & 3 & 1, 1 & $-$116 & Y & N\\ \hline
$2t\_\,5\_\,35$ & 8.967360849    & 7 & $D_2$& c & $S^3$ & 4 & 0, 2 & 697 & Y & N\\ \hline
$2t\_\,5\_\,36$ & 8.967360849    & 7 & $D_2$& c & $L$(7,2) & 4 & 0, 2 & 697 & Y & N \\ \hline
$2t\_\,5\_\,37$ & 9.045557688    & 5 & $\matZ_2$& c & $L$(3,1) & 5 & 1, 2 & 73532 & Y   & N\\ \hline
$2t\_\,5\_\,38$ & 9.272866192    & 7 & $\matZ_2$& c & $S^3$ & 6 & 0, 3 & $-$4319731 & Y & N\\ \hline
$2t\_\,5\_\,39$ & 9.353881135    & 7 & $\matZ_2$& c & $L$(3,1) & 6 & 0, 3 & $-$2944468 & Y & N\\ \hline
$2t\_\,5\_\,40$ & 9.437583617    & 9 & $\matZ_2$& c & $\matP^3$ & 4 & 0, 2 & 2312 & Y &N  \\ \hline
$2t\_\,5\_\,41$ & 9.491889687    & 5 & $D_2$& c & $S^3$ & 4 & 0, 2 & 257 & Y & N \\ \hline
$2t\_\,5\_\,42$ & 9.491889687    & 5 & $D_2$& c & $L$(3,1) & 4 & 0, 2 & 257 & Y & N   \\ \hline
$2t\_\,5\_\,43$ & 9.503403931    & 9 & $\matZ_2$& c & $\matP^3$ &4 & 0, 2 & 788 & N & N \\ \hline
$2t\_\,5\_\,44$ & 10.149416064    & 1* & $D_2$& c & $S^2\times S^1$ & 2 & 0, 1 & $-$3 & Y & Y \\ \hline
$2t\_\,5\_\,45$ & 10.396867321    & 6* & $D_3$& c & $S^3$ & 3 & 1, 1 & $-$139 & Y & N \\ \hline
$2t\_\,5\_\,46$ & 10.666979134    & 6 & $\matZ_2$   &   a  &  $S^3$  & 2 & 0, 1 &  -7 & Y & Y\\ \hline
$2t\_\,5\_\,47$ & 10.666979134    & 6 & $\matZ_2$& c & $S^3$ & 2 & 0, 1 & $-$7 & N & N \\ \hline
$2t\_\,5\_\,48$ & 10.666979134    & 5 & $\matZ_2$& c & $L$(3,1) & 2 & 0, 1 &  -7 & Y & Y\\ \hline
$2t\_\,5\_\,49$ & 10.666979134    & 5 & $\matZ_2$& c & $L$(3,1)  & 2 & 0, 1 &  -7 & Y & Y \\
\end{tabular}
\end{center}
\mycap{\label{2t:data:tab:2}
\textbf{Information on hyperbolic $\theta$-graphs up to complexity 5, table 2 of 2.}
Here $Q_{12}$ denotes the generalized quaternionic group of order 12
and $S^3/Q_{12}$ is the Seifert fibred space $\big(S^2;(2,-1),(2,1),(3,1)\big)$.
The Kojima
canonical decompositions of $2t\_\,5\_\,29$ and $2t\_\,5\_\,44$
consist of a cube; the decomposition of
$2t\_\,5\_\,45$  is the union of five tetrahedra and an octahedron.}
\end{table}}

{
\begin{table}
\begin{center}
\begin{tabular}{c||c|c|c|c|c|c|c|c|c|c}
name & volume &  (K) & sym & a/c & space & deg & sig & disc & int & ar \\ \hline\hline
$2h$\_\,1\_\,1 & 3.663862377 & 1 &  $D_4$ &       a & $S^3$ & 2 & 0, 1 &  -4 & Y & Y \\ \hline
$2h$\_\,2\_\,1 & 5.074708032 & 2 &  $D_4$ &       a & $\matP^3$ & 2 & 0, 1 & $-$3 & Y & Y \\ \hline
$2h$\_\,3\_\,1 & 5.875918083 & 3 &  $D_2$ &       c & $L($3,1) & 4 & 0, 2 & 656 & Y & N \\ \hline
$2h$\_\,3\_\,2 & 6.138138789 & 5 &  $D_2$ &       c & $S^3$  & 4 & 0, 2 &  320 & Y & N \\ \hline
$2h$\_\,4\_\,1 & 6.354586557 & 4 &  $D_2$ &       c & $L($4,1) & 3 & 1, 1 &  -44 & Y & N \\ \hline
$2h$\_\,4\_\,2 & 6.559335883 & 5 &  $D_2$ &       c & $L($3,1) & 6 & 0, 3 & $-$382208 & Y & N \\ \hline
$2h$\_\,5\_\,1 & 6.647203159 & 5 &  $D_2$ &       c & $L($5,1) & 6 & 0, 3 & $-$242752 & Y & N \\ \hline
$2h$\_\,4\_\,3 & 6.784755787 & 9 &  $D_2$ &       c & $S^3$ &  6 & 0, 3 & $-$108544 & Y & N \\ \hline
$2h$\_\,4\_\,4 & 6.831770496 & 6 &  $D_2$ &       c & $\matP^3$ & 4 & 0, 2 & 892 & Y & N \\ \hline
$2h$\_\,5\_\,2 & 6.854770090 & 7 &  $D_2$ &       c & $L($5,2) & 8 & 0, 4 & 502248448 & Y & N \\ \hline
$2h$\_\,5\_\,3 & 6.952347978 & 6 &  $D_2$ &       c & $L($4,1) & 5 & 1, 2 & 7684 & Y & N \\ \hline
$2h$\_\,5\_\,4 & 6.969842840 & 5 &  $\matZ_4$ &       a & $L($5,2) & 6 & 0, 3 & $-$179776 & Y & N \\ \hline
$2h$\_\,5\_\,5 & 7.008125009 & 9 &  $D_2$ &       c & $L($5,2) & 10 & 0, 5 & $-$1192884600832 & Y & N \\ \hline
$2h$\_\,5\_\,6 & 7.020614792 & 13 &     $D_2$ &       c & $S^3$ & 8 & 0, 4 & 89276416 & Y & N \\ \hline
$2h$\_\,5\_\,7 & 7.056979121 & 7 &  $D_2$ &       c & $L($3,1) & 10 & 0, 5 & $-$586177642496 & Y & N \\ \hline
$2h$\_\,5\_\,8 & 7.136868364 & 10 &     $D_2$ &       c & $\matP^3$ &  6 & 0, 3 & $-$682736 & Y & N \\ \hline
$2h$\_\,5\_\,9 & 7.146107337 & 9 &  $D_2$ &       c & $L($3,1) & 12 & 0, 6 & 8746362208256 & Y & N \\ \hline
$2h$\_\,3\_\,3 & 7.327724753 & 4 &  $D_2$ &       a & $S^3$ & 2 & 0, 1 & $-$4 & Y & Y \\ \hline
$2h$\_\,4\_\,5 & 7.327724753 & 4 &  $D_2$ &       a & $S^2\times S^1$ & 2 & 0, 1 & $-$4 & Y & Y \\ \hline
$2h$\_\,5\_\,10 & 7.731874058 & 5 &  $\matZ_2$ &       c & $L($4,1)& 6 & 0, 3 & $-$96512 & Y & N \\ \hline
$2h$\_\,5\_\,11 & 8.140719221 & 6 &  $\matZ_2$ &       c & $S^3$ & 6 & 0, 3 & $-$382208 & Y & N \\ \hline
$2h$\_\,5\_\,12 & 8.140719221 & 5 &  $\matZ_2$ &       c & $S^3$ & 6 & 0, 3 & $-$382208 & Y & N \\ \hline
$2h$\_\,4\_\,6 & 8.738570409 & 4 &  $\matZ_2$ &       a & $\matP^3$ & 4 & 0, 2 &144 & Y & N \\ \hline
$2h$\_\,5\_\,13 & 8.997351944 & 3* &     $\matZ_2$ &       c & $S^3$  & 4 & 0, 2 &  784 & Y & N \\ \hline
$2h$\_\,4\_\,7 & 8.997351944 & 4 &  $\{{\rm id}\}$ &     c & $S^3$ & 4 & 0, 2 &  784 & Y & N  \\ \hline
$2h$\_\,4\_\,8 & 8.997351944 & 4 &  $\matZ_2$ &       c & $L($3,1) & 4 & 0, 2 &  784 & Y & N \\ \hline
$2h$\_\,5\_\,14 & 9.539780459 & 5 &  $\{{\rm id}\}$ &     c & $L($3,1) & 4 & 0, 2& 656 & Y & N \\ \hline
$2h$\_\,5\_\,15 & 9.539780459 & 5 &  $D_2$ &       c & $S^3$  & 4 & 0, 2& 656 & Y & N \\ \hline
$2h$\_\,5\_\,16 & 9.592627932 & 6 &  $D_2$ &       c & $\matP^3$ & 4 & 0, 2& 1436 & Y & N \\ \hline
$2h$\_\,5\_\,17 & 9.802001166 & 5 &  $\{{\rm id}\}$ &     c & $S^3$ & 4 & 0, 2 & 320 & N & N \\ \hline
$2h$\_\,5\_\,18 & 9.876829057 & 5 &  $\matZ_2$ &       c & $S^3$ & 6 & 0, 3 & $-$239168 & Y & N \\ \hline
$2h$\_\,5\_\,19 & 10.018448934 & 5 &  $\{{\rm id}\}$ &     c & $\matP^3$ & 6 & 0, 3 & $-$30976 & N & N \\ \hline
$2h$\_\,5\_\,20 & 10.018448934 & 5 &  $\{{\rm id}\}$ &     c & $L($4,1) & 6 & 0, 3 & $-$30976 & Y & N  \\ \hline
$2h$\_\,5\_\,21 & 10.018448934 & 5 &  $\matZ_2$ &       c & $L($4,1) & 6 & 0, 3 & $-$30976 & Y & N  \\ \hline
$2h$\_\,5\_\,22 & 10.069070958 & 7 &  $\matZ_2$ &       c & $\matP^3$ & 4 & 0, 2 & 1384 & Y & N \\ \hline
$2h$\_\,5\_\,23 & 10.149416064 & 4* &     $\matZ_2$ &       c & $S^2\times S^1$ & 2 & 0, 1 & $-$3  & Y & Y \\ \hline
$2h$\_\,5\_\,24 & 10.215605665 & 5 &  $\{{\rm id}\}$ &     c & $S^3$ & 6 & 0, 3 & $-$732736 & N    & N \\ \hline
$2h$\_\,5\_\,25 & 10.215605665 & 5 &  $\{{\rm id}\}$ &     c & $L($5,2) & 6 & 0, 3 & $-$732736 & Y & N \\ \hline
$2h$\_\,5\_\,26 & 10.215605665 & 5 &  $\matZ_2$ &       c & $L($5,2) & 6 & 0, 3 & $-$732736 & Y & N \\ \hline
$2h$\_\,5\_\,27 & 10.408197599 & 5 &  $\{{\rm id}\}$ &     c & $\matP^3$ & 4 & 0, 2 & 441 & N &N \\
\end{tabular}
\end{center}
\mycap{\label{2h:data:tab:1}
\textbf{Information on hyperbolic handcuff graphs up to complexity 5.}
The Kojima canonical decomposition of $2h$\_\,5\_\,13 is the union of a tetrahedron and two pyramids with square base; the decomposition for
$2h$\_\,5\_\,23 is the union of two tetrahedra and two pyramids with square base.}
\end{table}}

{
\begin{table}
\begin{center}
\begin{tabular}{c||c|c|c|c|c|c|c|c|c|c}
name & volume &  (K) & sym & a/c & space & deg & sig & disc & int & ar \\ \hline\hline
$4a$\_\,2\_\,1 & 7.327724753 & 2 &  $\matZ_2\times O$ & a & $S^3$  & 2 & 0, 1 & $-$4 & Y & Y\\ \hline
$4a$\_\,5\_\,1 & 11.751836165 & 6 &  $D_4$ &   c & $S^3$  & 4 & 0, 2 & 656 & Y & N \\ \hline
$4a$\_\,5\_\,2 & 12.661214320 & 5 &  $\matZ_2$ &   c & $S^3$ & 4 & 0, 2 & 784 & Y & N \\ \hline\hline
$4b$\_\,4\_\,1 & 10.149416064 & 4 &     $\matZ_2\times D_4$ &    a & $S^3$  & 2 & 0, 1 &  -3 & Y & Y \\ \hline\hline
$4c$\_\,4\_\,1 & 10.991587130 & 4 &     $D_2$ &   a & $S^3$ & 2 & 0, 1 &  -4 & Y & Y \\
\end{tabular}
\end{center}
\mycap{\label{o:data:tab}
\textbf{Information on hyperbolic 4-vertex graphs
up to complexity 5.} Here $O$ denotes the group of
orientation-preserving symmetries of the regular octahedron,
isomorphic to the full group of symmetries of the regular
tetrahedron.}
\end{table}}

\subsection{Table of knots} \label{table:knots:subsection}
As already mentioned, we have classified hyperbolic knots only up to
complexity 4, finding 5 of them. The table containing their description
differs from the previous ones only in that the third column gives the number of cells in the
Epstein-Penner~\cite{EpPe} canonical decomposition (the Kojima decomposition is not defined).
We also provide an additional table showing
the name of each knot complement in the SnapPea census~\cite{CaHiWe}, and
either the name of the knot in~\cite{Rolfsen} (for
the knots in $S^3$) or the surgery coefficients on one of the
components of the Whitehead link ($5_1^2$ in~\cite{Rolfsen})
yielding the knot.

As shown in the introduction and in Section~\ref{non:section}
below, there are many $(0,1,2)$-irreducible knots in complexity up
to $3$, and most of them are not hyperbolic: this phenomenon can
be understood using spines, see Proposition~\ref{complement:prop}.

{
\begin{table}
\begin{center}
\begin{tabular}{c||c|c|c|c|c|c|c|c|c|c}
name & volume &  (EP) & sym & a/c & space & deg & sig & disc & int & ar \\ \hline\hline
0\_\,3\_\,1 & 2.029883213 & 2 & $D_4$ & a  & $S^3$      & 2 & 0,1& $-$3 & Y & Y\\ \hline
0\_\,4\_\,1 & 2.029883213 & 2 & $D_2$ & c  & $L(5,1)$   & 2 & 0,1& $-$3 & Y & Y \\ \hline
0\_\,4\_\,2 & 2.568970601 & 4 & $D_2$ & c  & $L(3,1)$   & 3 & 1,1 & $-$59 &  Y & N \\ \hline
0\_\,4\_\,3 & 2.666744783 & 3 & $D_2$ & c  & $\matP^3$  & 2 & 0,1& $-$7 &Y & Y\\ \hline
0\_\,4\_\,4 & 2.828122088 & 4 & $D_2$ & c  & $S^3$      & 3 & 1,1 & $-$59 &  Y & N
\end{tabular}
\end{center}
\mycap{\label{k:data:tab}
\textbf{Information on hyperbolic knots up to complexity 4.}}
\end{table}}

{
\begin{table}
\begin{center}
\begin{tabular}{c||c|c}
name & in~\cite{CaHiWe} & in~\cite{Rolfsen} \\ \hline\hline
0\_\,3\_\,1 & $m$004 & $4_1$ \\ \hline
0\_\,4\_\,1 & $m$003 & $5_1^2(-5,1)$ \\ \hline
0\_\,4\_\,2 & $m$007 & $5_1^2(-3,2)$ \\ \hline
0\_\,4\_\,3 & $m$009 & $5_1^2(2,1)$ \\ \hline
0\_\,4\_\,4 & $m$015 & $5_2$
\end{tabular}
\end{center}
\mycap{\label{k:names:tab}
\textbf{Other names for hyperbolic knots up to complexity 4.}}
\end{table}}

\subsection{Compact totally geodesic boundary} \label{geodesic:subsection}
The 3 graphs referred to in Proposition~\ref{cpt:bd:prop} are
$2t\_\,5\_\,45$, $2t\_\,5\_\,46$ and $2t\_\,5\_\,47$ in
Table~\ref{2t:data:tab:2}; these are shown in
Figure~\ref{geod_bound:fig}. (In particular, Thurston's knotted
$Y$~\cite[pp.~133-137]{bible} is $2t\_\,5\_\,45$.)
Their hyperbolic structures were constructed using \emph{Orb}. They
all have the lowest possible volume
($\approx 6.45199027$) for hyperbolic $3$-manifolds with genus 2
boundary (see~\cite{KoMi}), but they can be distinguished by their
Kojima decompositions or symmetry groups. All the other graphs
were shown not to have such a structure by studying spines for
their complements constructed as in the proof of
Proposition~\ref{complement:prop}. In all but two cases, this
produced a spine for the complement of complexity having less than $2$
vertices, hence
the complement has no hyperbolic structure with geodesic boundary
by Remark~\ref{complement:rem}.  For the
two remaining cases, we found a spine having 2 vertices  but not
dual to a triangulation. It again follows that these manifolds are
not hyperbolic with geodesic boundary, because a minimal simple
spine of a hyperbolic manifold is always dual to a
triangulation~\cite{Matv:book}.

\section{Irreducible non-hyperbolic graphs}\label{non:section}

This section is devoted to the description of the
$(0,1,2)$-irreducible but non-hyperbolic graphs we have found
in our census, including the proof that indeed they have these
properties.

\subsection{Knots and links}\label{non:knots:subsection}
As already stated in the introduction, we have shown that if a
graph $(M,G)$ with $c(M,G)\leqslant 4$ is $(0,1,2)$-irreducible
but non-hyperbolic then $G$ has no vertices. More precisely, $G$
is either empty, or a knot, or a two-component link. Since this
paper is chiefly devoted to the understanding of graphs
\emph{with} vertices, we will only very briefly describe our
discoveries for the case without vertices.  In particular, we will
not refer to the case of empty $G$ (\emph{i.e.}, to the case of
manifolds), addressing the reader to~\cite{Matv:book}, and we will
describe the following non-hyperbolic knots and links:
\begin{itemize}
\item up to complexity $2$, in general manifolds;
\item in complexity $3$, in $S^3$.
\end{itemize}
To proceed we will introduce some general machinery.

\subsection{Torus knots in lens spaces}\label{torus_knot_sect}
Consider the solid torus $\matT$ and the basis of $H_1(\partial\matT)$
given by a longitude $\lambda$ and a meridian $\mu$. These elements are
characterized up to symmetries of $\matT$ by the property that the restriction
to $\langle\lambda\rangle$ of the map $i_*:H_1(\partial\matT)\to H_1(\matT)$
is surjective, while $\langle\mu\rangle$ is the kernel of this map.

For coprime $\ell,m\in\matZ$ we will denote by $K(\ell,m)$ a simple closed curve
on $\partial\matT$ (unique up to isotopy) representing
$\ell\cdot\lambda+m\cdot\mu$ in $H_1(\partial\matT)$.
For $n\geqslant 2$ we will also denote by $K(n\cdot \ell,n\cdot m)$ the union of
$n$ parallel copies of $K(\ell,m)$.

We will assume from now on that the lens space $L(p,q)$ is obtained
from $\matT$ by Dehn filling along $K(p,q)$. Therefore any $K(\ell,m)$
can be viewed as a {\em torus knot} on the Heegaard torus $\partial\matT$ in $L(p,q)$.
An easy application of the Seifert-Van Kampen
theorem implies the following:

\begin{prop}\label{pi1:torus:prop}
For $\ell,m$ coprime integers,
$\pi_1\big(L(p,q)\setminus K(\ell,m)\big)\cong\langle x,y|\ x^a=y^b\rangle$
with $a=| \ell |$ and $b=| p m-q \ell |$.
\end{prop}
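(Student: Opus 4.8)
The plan is to apply the Seifert--Van Kampen theorem to a decomposition of $L(p,q)\setminus K(\ell,m)$ into two pieces glued along an annulus, exactly as one does for torus knots in $S^3$. Write $L(p,q) = \matT \cup_\phi \matT'$ as a union of two solid tori glued along their boundary tori, where we have arranged that the gluing sends the attaching curve of $\matT'$ to $K(p,q)\subset\partial\matT$. The knot $K=K(\ell,m)$ sits on the common Heegaard torus $\Sigma=\partial\matT$. Pushing $K$ slightly to one side gives a curve bounding (together with a parallel copy) an annulus neighbourhood $A$ of $K$ in $\Sigma$; then $L(p,q)\setminus K$ is the union of $U = \matT \setminus (\text{collar of }K)$ and $U' = \matT' \setminus (\text{collar of }K)$, and $U\cap U'$ deformation retracts onto a core circle of $A$, hence is homotopy equivalent to $S^1$, generated by an element we call $z$ (a curve isotopic to $K$ on $\Sigma$).

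The key computation is then to identify each of $\pi_1(U)$ and $\pi_1(U')$ together with the image of $z$. Since $U$ is obtained from the solid torus $\matT$ by removing a collar of a curve lying on its boundary, $U$ deformation retracts onto $\matT$ itself, so $\pi_1(U)\cong\Z$, generated by the core $x$ of $\matT$. Under this retraction, the curve $K(\ell,m) = \ell\lambda + m\mu$ on $\partial\matT$ maps to $\ell$ times the core, because $\lambda \mapsto x$ and $\mu \mapsto 0$ in $H_1(\matT)$ (and $\pi_1$ agrees with $H_1$ here as everything is abelian); thus $z \mapsto x^{\ell}$, and since only the absolute value matters for the resulting group we record $z = x^{a}$ with $a = |\ell|$. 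Symmetrically, $\pi_1(U')\cong\Z$ generated by the core $y$ of $\matT'$, and $z \mapsto y^{b}$ where $b$ is the absolute value of the intersection-type coefficient of $K(\ell,m)$ with respect to the basis of $\partial\matT'$; a direct computation with the gluing matrix sending $\lambda,\mu$ to the meridian-longitude system of $\matT'$ (whose meridian is $K(p,q) = p\lambda + q\mu$) shows this coefficient is $pm - q\ell$, so $b = |pm - q\ell|$. Seifert--Van Kampen now gives
$$\pi_1\big(L(p,q)\setminus K(\ell,m)\big) \cong \langle x,y \mid x^{a} = y^{b}\rangle$$
as claimed.

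The main obstacle, and the only point requiring genuine care rather than bookkeeping, is getting the exponent $b = |pm - q\ell|$ right: one must pin down the correct change-of-basis between $H_1(\partial\matT)$ and the meridian--longitude basis of $\partial\matT'$ coming from the specific Dehn filling convention ``$L(p,q)$ is filled along $K(p,q)$'' fixed just before the statement, and then read off the $\mu'$-coefficient of $\ell\lambda + m\mu$. This is where an orientation or a sign could slip in, but since the presentation $\langle x,y\mid x^a = y^b\rangle$ depends only on $a$ and $b$ up to sign (swapping orientations of $x$, $y$, or $z$), taking absolute values absorbs all ambiguity. Everything else — that the two pieces are solid-torus complements of boundary curves and hence homotopy circles, and that the overlap is a homotopy circle — is routine, so the proof reduces to this one linear-algebra identification together with a single application of Seifert--Van Kampen.
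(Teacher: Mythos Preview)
Your proposal is correct and is precisely the approach the paper has in mind: the paper merely states that the result is ``an easy application of the Seifert--Van Kampen theorem,'' and you have supplied exactly that application, decomposing $L(p,q)\setminus K(\ell,m)$ along the Heegaard torus into two solid tori with $\pi_1\cong\Z$ and computing the images of the core annulus generator. One small wording quibble: what you call the ``$\mu'$-coefficient'' is really the $\lambda'$-coefficient in the $(\lambda',\mu')$ basis (equivalently, the intersection number of $K$ with $\mu'$), since it is $\mu'$ that dies in $\pi_1(\matT')$; your computed value $pm-q\ell$ is nonetheless correct.
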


\begin{rem}
\emph{The curves $K(\ell,m)$ and $K(m,\ell)$ coincide as knots
in $L(1,0)=S^3$. For instance $K(2,3)$ and $K(3,2)$ are equivalent trefoil knots in $L(1,0)=S^3$.
This is of course
consistent with the computation of the fundamental group.}
\end{rem}

\begin{prop}\label{irred:torus:prop}
If $(\ell,m)=(p,q)=1$ then $\big(L(p,q),K(\ell,m)\big)$ is a $(0,1,2)$-irreducible pair
except in the following cases:
\begin{itemize}
\item $\ell=0$ or $p m-q \ell=0$, and $q\neq 0$ (\emph{i.e.}, $L(p,q)\neq S^3$);
\item $|\ell|\leqslant 2$ and $p=0$ (\emph{i.e.}, $L(p,q)=S^2\times S^1$).
\end{itemize}
\end{prop}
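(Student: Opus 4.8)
The plan is to work directly with the graph exterior $X = L(p,q)\setminus\mathrm{int}\,N\big(K(\ell,m)\big)$, exploiting the fact that $K=K(\ell,m)$ lies on the Heegaard torus $T=\partial\matT$. First I would record the structure of $X$: cutting $L(p,q)$ along $T$ and deleting $N(K)$ presents $X$ as a union $W_1\cup_A W_2$ of two solid tori glued along the annulus $A=T\setminus N(K)$, where (with the notation of Proposition~\ref{pi1:torus:prop}) the core of $A$ runs $a=|\ell|$ times longitudinally in $W_1=\matT\setminus N(K)$ and $b=|pm-q\ell|$ times longitudinally in $W_2=\matT'\setminus N(K)$. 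When $a,b\geqslant 1$ this realizes $X$ as a Seifert fibred space over a disc with at most two exceptional fibres, of orders $a$ and $b$, the core of $A$ being a regular fibre; if $a$ or $b$ equals $1$ an exceptional fibre degenerates and $X$ is a solid torus.

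Next I would dispose of the exceptional cases. If $\ell=0$ then $K$ is a meridian of $\matT$, and if $pm-q\ell=0$ then $(\ell,m)=\pm(p,q)$ and $K$ is a meridian of $\matT'$; either way $K$ bounds a disc $D$ in $L(p,q)$, and if $L(p,q)\neq S^3$ the boundary of a regular neighbourhood of $D\cup K$ is a $2$-sphere disjoint from $K$ that bounds no ball disjoint from $K$, so $0$-irreducibility fails. If $p=0$, so $L(p,q)=S^2\times S^1$, the Heegaard $2$-sphere meets $T$ in a meridian of $\matT$ and so meets $K$ transversely in exactly $|\ell|$ points: when $|\ell|=0$ this is a reducing sphere disjoint from $K$; when $|\ell|=1$ it violates $1$-irreducibility; when $|\ell|=2$ it is non-separating and so violates $2$-irreducibility. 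This exhausts the listed exceptions.

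Now I would assume we are outside those cases. Then $a\geqslant 1$, and either $b\geqslant1$ or else $b=0$ forces $q=0$, whence $L(p,q)=S^3$ and $K$ is the unknot (which is routinely $(0,1,2)$-irreducible); so I may take $a,b\geqslant1$, so that $X$ is one of the Seifert pieces above, in particular irreducible. Since any $2$-sphere in $M$ disjoint from $K$ may be isotoped into $X$, it bounds a ball there, giving $0$-irreducibility. For $1$-irreducibility: if $p\neq0$ then $L(p,q)$ is irreducible, so every embedded $2$-sphere is separating and meets $K$ evenly; and if $p=0$ a sphere meeting $K$ once would have homological intersection $\pm1$ with $K$, forcing $|\ell|=1$, which is excluded. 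Finally, for $2$-irreducibility, I would take a $2$-sphere $S$ meeting $K$ transversely in two points, set $\widehat{A}=S\cap X$ (an annulus whose two boundary circles are meridians of $K$), and show $\widehat{A}$ is boundary-parallel --- which is precisely the statement that $S$ bounds a ball meeting $K$ in an unknotted arc. It is incompressible, since a compression would produce a $2$-sphere meeting $K$ once, contradicting the $1$-irreducibility just proved. An incompressible annulus in an irreducible Seifert fibred space with incompressible boundary is boundary-parallel or, up to isotopy, vertical or horizontal. A vertical annulus has boundary a regular fibre, whereas the meridian of $K$ is transverse to the fibre of the fibration of $X$ arising from the torus-knot picture (that fibration extends over $N(K)$), so $\widehat{A}$ is not vertical for that fibration; and horizontal annuli occur only in the borderline case $\{a,b\}=\{2,2\}$, where $X$ is the twisted $I$-bundle over the Klein bottle and such an annulus is vertical for the second Seifert fibration --- but filling $X$ along that second fibre slope gives a Seifert manifold over a closed non-orientable surface, not the lens space $L(p,q)$, so the meridian of $K$ is not that slope either. (If $X$ is merely a solid torus, every incompressible annulus is boundary-parallel outright.) Hence $\widehat{A}$ is boundary-parallel, as needed.

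The hard part will be this last, $2$-irreducibility step: controlling all incompressible annuli in the Seifert fibred exterior $X$, and in particular handling the case $a=b=2$, where $X$ acquires a second Seifert fibration and one must check that the meridian of $K$ avoids \emph{both} fibre slopes. The degenerate solid-torus sub-cases, where an exceptional fibre ``of order $1$'' collapses, also need a remark, though there the classification of incompressible annuli settles matters immediately.
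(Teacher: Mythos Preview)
Your route is genuinely different from the paper's. The paper never passes to the Seifert picture of $X$; instead it analyses a putative essential sphere $S$ directly by making it transverse to the Heegaard torus $\partial\matT$ and studying innermost discs of $S\cap\partial\matT$. After discarding trivial discs and surgering away discs whose boundary is inessential on $\partial\matT$ but meets $K$ twice (which reduces to the $t=0$ case), only meridian discs of $\matT$ or $\matT'$ remain; since at most two of them can meet $K$, one is forced into the $S^2\times S^1$, $|\ell|\leqslant 2$ configuration. Your approach via the classification of incompressible annuli in the Seifert exterior is more structural and equally valid in principle, and it gives a cleaner explanation of \emph{why} the $p=0$ exception arises.

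There is, however, a genuine error in your handling of the borderline case $a=b=2$. You assert that ``filling $X$ along that second fibre slope gives a Seifert manifold over a closed non-orientable surface.'' This is backwards: Dehn filling along the \emph{fibre} slope of a Seifert fibration never extends that fibration, because the fibre becomes null-homotopic in the attached solid torus. (It is filling along any slope \emph{other} than $f_2$ that would extend the M\"obius-band fibration to one over $\mathbb{RP}^2$.) Concretely, with $\pi_1(\matK)=\langle x,y\mid xyx^{-1}=y^{-1}\rangle$ and $\pi_1(\partial\matK)=\langle x^2,y\rangle$, the regular fibre of the M\"obius-band fibration is $y$; filling along $f_2=y$ kills $y$ and leaves $\pi_1\cong\matZ$, so the filled manifold is $S^2\times S^1$, not a Seifert space over $\mathbb{RP}^2$. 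Your argument is easily repaired: since $a=|\ell|=2$, the exception $p=0,\ |\ell|\leqslant 2$ forces $p\ne 0$ here, so $L(p,q)\ne S^2\times S^1$ and hence $\mu\ne f_2$. With this correction the proof goes through. (A minor further point: you should also note that if $\widehat A$ is boundary-compressible then, being incompressible in the irreducible manifold $X$, it is already boundary-parallel; only then does the vertical/horizontal dichotomy apply.)
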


\begin{proof}
If $\ell=0$ or $p m-q \ell=0$ then $K:=K(\ell,m)$ bounds a
meridian disc of either $\matT$ or the complementary solid torus attached to $\partial \matT$.
Therefore $K$ is the unknot, and the pair is not $0$-irreducible
when $L(p,q)\neq S^3$. If $L(p,q)=S^2\times S^1$, the knot $K$
intersects the sphere $S^2\times\{pt\}$ in $|\ell|$ points.
Therefore if $|\ell|\leqslant 2$ the pair is not
$|\ell|$-irreducible.

Conversely, let us assume that there exists an essential sphere $S$ in $L(p,q)$
meeting $K:=K(\ell,m)$ transversely in $t\leqslant 2$ points. Suppose first that $t=0$.
If $|\ell|$ and $|pm-q\ell|$ are non-zero,
the complement of $K$ in $L(p,q)$ has a Seifert fibration over the disc with two
singular fibers of orders $|\ell|$ and $|pm-q\ell|$: such a manifold is irreducible,
so $S$ cannot be essential,
a contradiction. So either $\ell=0$ or $pm-q\ell=0$, which implies that $K$ is the unknot
in one of the solid tori and $S$ is the boundary of a ball containing $K$.
Since $S$ is essential it follows that
$M\ne S^3$, namely $q\neq 0$. (This argument shows in particular
that when $L(p,q)=S^2\times S^1$ (\emph{i.e.}, $p=0$), the pair
$(L(p,q),K)$ is 0-reducible only for $\ell=0$.)

Suppose now $t\ne 0$ and assume, after an isotopy, that $S$ is transverse to
the Heegaard torus $\partial\matT$. Considering this transverse intersection on $S$
we see that there must be at least two innermost discs. Moreover any innermost
disc belongs to one of the following types:

\begin{itemize}
\item[(I)] Its boundary is inessential on $\partial\matT$ and disjoint from $K$;
\item[(II)] Its boundary is inessential on $\partial\matT$ and meets $K$ transversely in two points;
\item[(III)] It is a meridian disc of either $\matT$ or of the complementary solid torus.
\end{itemize}

Discs of type (I) can be removed by an isotopy.
If there is a disc of type (II)
then doing surgery close to it we can replace $S$ by an essential sphere disjoint from
$K$, so we are led back to the case $t=0$. Therefore we can assume all the discs are of type (III).
If $\ell=0$ or $pm-q\ell = 0$,
we can again reduce to the case $t=0$. So we can assume that
all the innermost discs
meet $K$, which easily implies that there are only two of them, either sharing
their boundary or separated by an annulus. In the first case we see that
$M=S^2\times S^1$ (\emph{i.e.}, $p=0$) and $1\leqslant |\ell|\leqslant 2$.
In the second case we deduce that $S$ is actually inessential,
which is absurd. This concludes the proof.
\end{proof}

\subsection{Layered triangulations}
A {\em layered triangulation} (see ~\cite{layered}) of a lens space $L(p,q)$  is
constructed as follows. We start with a solid torus triangulated using one tetrahedron
as in Figure~\ref{1tet_torus}. The boundary torus is triangulated by 2 triangles,
3 edges and 1 vertex. A change of the triangulation on the boundary
by a diagonal exchange move (``flip'') can be realized by adding one tetrahedron. After
a series of these moves, the resulting triangulation can be closed up by adding
another 1-tetrahedron triangulation of a solid torus to produce a lens space.

    \begin{figure}
    \begin{center}
\includegraphics[scale=1.0]{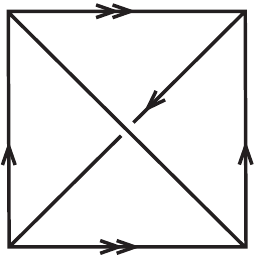}
    \mycap{\label{1tet_torus} A 1-tetrahedron triangulation of the solid torus.
    The back two triangles are glued together to form a M\"obius strip.
    The front two triangles form the boundary torus.
    }
    \end{center}
    \end{figure}

Such a layered triangulation of $L(p,q)$ with
one vertex and one marked edge always gives rise to some torus
knot $K(\ell,m)\subset L(p,q)$. Using the Farey tessellation
of hyperbolic plane $\H^2$ we will now show the converse, namely for
every torus knot $\big(L(p,q), K(\ell,m)\big)$ we will construct
a layered triangulation.

Recall that the Farey tessellation of $\H^2$ is constructed in the
half-plane model by joining with a geodesic every pair $(p/q,r/s)$
of rational ideal points in $\mathbb Q
\cup\{\infty\}\subset\partial\H^2$ 
where $p,q,r,s$ are integers with $ps-qr= \pm 1$.
After fixing some basis for $H_1(T)$, every
slope (\emph{i.e.}, unoriented essential simple closed curve) on a
torus $T$ is represented by a rational number $p/q\in\partial \H^2$,
and two such numbers are connected by an edge of the tessellation
when they have geometric intersection number $1$. 

Every triangle of the tessellation represents three slopes with
pairwise intersection $1$, and hence a $1$-vertex triangulation of
$T$. Dually, they represent a $\theta$-graph in $T$ as in Figure
\ref{Mobius_triplet}-(1-top). Moreover, every edge of the
tessellation represents a flip relating the $\theta$-graphs of $T$
corresponding to the adjacent triangles as in
Figure~\ref{Mobius_triplet}-(2,3).

A layered triangulation of a lens space $L(p,q)$ is easily
encoded via a \emph{path of triangles} of the tessellation
connecting the rational numbers $0/1$ and $p/q$, \emph{i.e.}, a
sequence $f_1,\ldots, f_k$ of $k\geqslant 4$ triangles such that
$f_{i-1}$ and $f_i$ share an edge for $i=2,\ldots,k$, the vertex of
$f_1$ disjoint from $f_2$ is $0/1$, and the vertex of $f_k$ disjoint
from $f_{k-1}$ is $p/q$. The path need not to be injective,
\emph{i.e.}, there may be repetitions. Such a path is similar to the
one defined in~\cite{layered, MarPet} for layered solid tori. It
determines a layered triangulation of $L(p,q)$ with $k-3$
tetrahedra, $k-2$ edges and $1$ vertex, as described in Fig
\ref{Mobius_triplet}.

The $k-2$ edges of the layered triangulation become
torus knots, and they correspond to all the slopes $\ell/m$ contained in
some $f_i$ except $0/1$ and $p/q$. (There are $k$ different such slopes, but the two
in $f_1$ different from $0/1$ give isotopic links in $L(p,q)$, and in fact
the same edge in the layered triangulation, and similarly for
the two slopes in $f_k$ different from $p/q$, whence the number $k-2$).
See Figure~\ref{layered_tori} for some examples.

    \begin{figure}
    \begin{center}
    \includegraphics[scale=1.0]{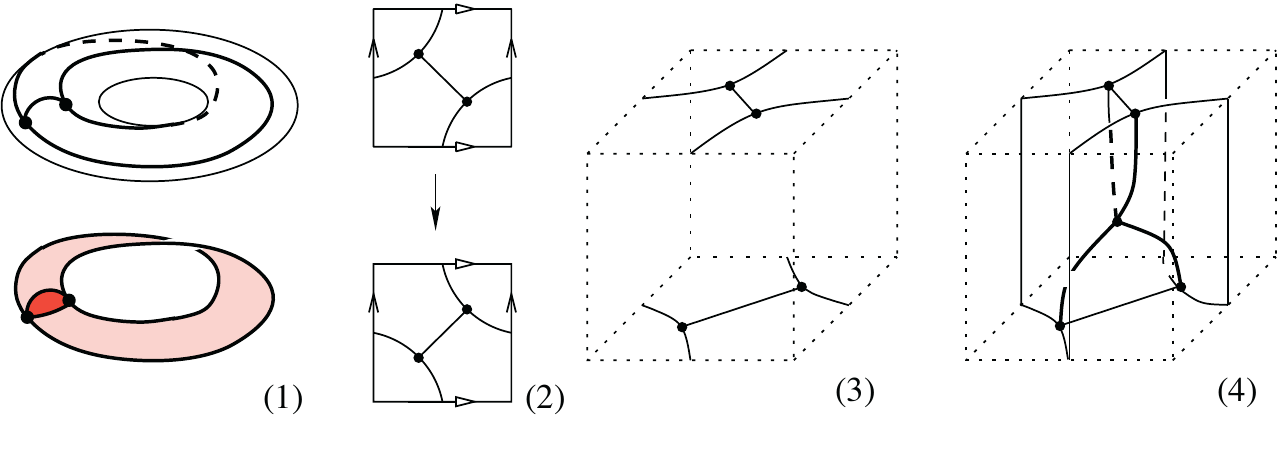}
\mycap{\label{Mobius_triplet} A path of triangles $f_1,\ldots,f_k$
in the Farey tessellation determines a layered triangulation of a
lens space, as follows. We describe the dual special spine. The
vertices of $f_2$ are $1$, $2$, $\infty$ and they determine the
$\theta$-graph in $\partial\matT$ shown in (1-top). We take a
portion of spine, made of a M\"obius strip and one disc, bounded by 
this $\theta$-graph (1-bottom). Each step from $f_i$ to
$f_{i+1}$ for $2 \leqslant i \leqslant k-2$
corresponds to a diagonal flip of the $\theta$-graph (2,3)
which expands the portion of spine by creating a vertex (4).
Finally, we close the spine at $f_{k-1}$ by adding an analogous
M\"obius strip for the other Heegaard torus. There are $k-3$ flips
and hence $k-3$ vertices in the spine.}
    \end{center}
    \end{figure}

    \begin{figure}
    \begin{center}
    \includegraphics[scale=0.9]{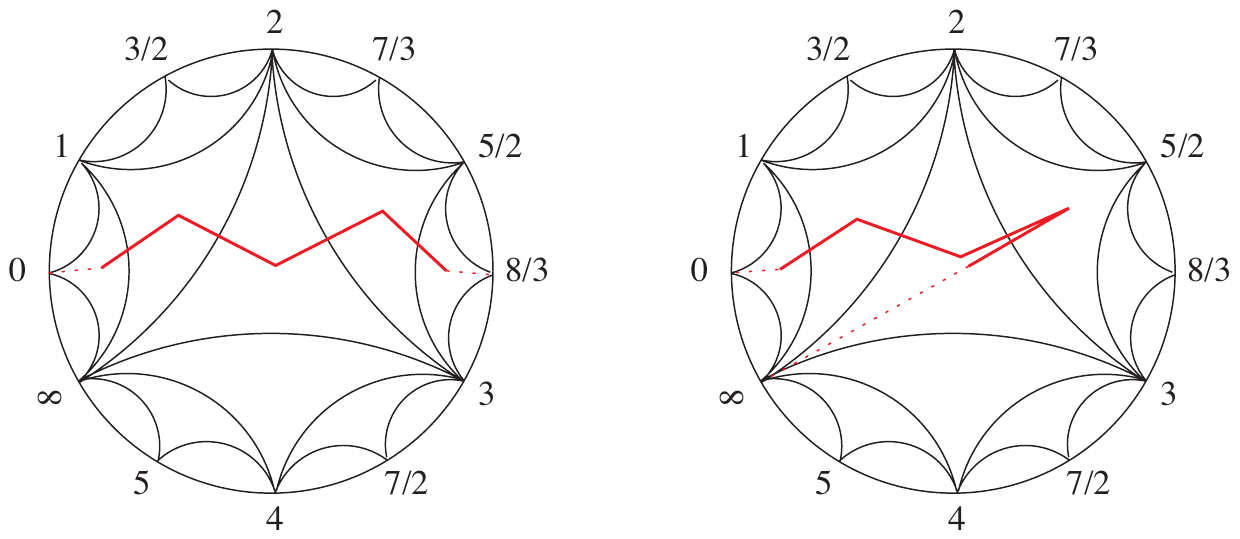}
\mycap{\label{layered_tori} Two paths of triangles. The first gives
a triangulation of $L(8,3)$ containing the torus knots $K(1,0)$ and
$K(2,1)$, and other torus knots equivalent to these. The second path
is not injective and gives a triangulation of $L(1,0)$ containing
$K(5,2)$, \emph{i.e.} the $(5,2)$ torus knot in $S^3$. Both
triangulations contain $5-3=2$ tetrahedra.}
    \end{center}
    \end{figure}

Let then $\lambda(\ell,m,p,q)$ be the length of the shortest
path of triangles from $0/1$ to $p/q$ which contains $\ell/m$. By
what just said, we have:
$$c\big(L(p,q),K(\ell,m)\big)\leqslant \max\big\{\lambda(\ell,m,p,q)-3,0\big\}.$$
It was conjectured in~\cite{Matv:AAM} that every
$L(p,q)=\big(L(p,q),\emptyset\big)$ with $c\neq 0$ has
a minimal triangulation which is layered,
namely that $c\big(L(p,q)\big)=\max\big\{\lambda(p,q)-3,0\big\}$, where
$\lambda(p,q)$ is the length of the shortest
path of triangles from $0/1$ to $p/q$. We now propose the following extension:

\begin{conj} \label{torus:conj} The complexity of a $(0,1,2)$-irreducible torus knot
in a lens space is
$$c\big(L(p,q),K(\ell,m)\big)= \max\big\{\lambda(\ell,m,p,q)-3,0\big\}.$$
\end{conj}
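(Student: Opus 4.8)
We outline a strategy. The inequality $c\big(L(p,q),K(\ell,m)\big)\leqslant\max\{\lambda(\ell,m,p,q)-3,0\}$ is exactly what the layered-triangulation construction above produces, so the content of the conjecture is the reverse inequality; and since the cases with $\lambda\leqslant 3$ reduce to the classification of complexity-zero pairs in Proposition~\ref{c=0:irred:prop}, we may assume $\lambda(\ell,m,p,q)\geqslant 4$. By Theorem~\ref{good:min:spin:thm}, for a $(0,1,2)$-irreducible pair of positive complexity the minimal spines are dual to efficient triangulations realizing $c\big(L(p,q),K(\ell,m)\big)$, so it is equivalent to show that \emph{every efficient triangulation of $\big(L(p,q),K(\ell,m)\big)$ has at least $\lambda(\ell,m,p,q)-3$ tetrahedra.}

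I would proceed in two stages, parallel to the known partial results on Matveev's lens-space conjecture $c\big(L(p,q)\big)=\max\{\lambda(p,q)-3,0\}$, which this statement extends. \textbf{Stage 1 (layered triangulations).} Starting from an efficient triangulation $\calT$ of the pair, isotope a Heegaard torus $\partial\matT$ into normal position with respect to $\calT$ and use the efficiency hypothesis (in the spirit of the $0$- and $1$-efficiency arguments behind~\cite{layered}) to arrange that $\partial\matT$ meets $\calT$ in a \emph{layered} pattern, with the knot $K(\ell,m)$ occurring as one of the layering edges. The successive diagonal exchanges then trace out a path of triangles in the Farey tessellation joining $0/1$ to $p/q$ and passing through $\ell/m$; by the very definition of $\lambda$, this forces at least $\lambda(\ell,m,p,q)-3$ tetrahedra, proving the conjecture for triangulations that are already layered. \textbf{Stage 2 (non-layered triangulations).} For the remaining efficient triangulations one would either (a) establish a Farey-graph monotonicity statement --- that the Matveev--Piergallini and layering moves never shorten the associated path through $\ell/m$ --- so that any non-layered efficient triangulation can be exchanged, without increasing the number of tetrahedra, for a layered one, as in~\cite{layered,MarPet}; or (b) bring in lower bounds on Matveev complexity of homological and quantum type (the $\matZ_n$-homology weights of~\cite{Matv:book}, Turaev--Viro invariants), applied to $L(p,q)$ itself, or --- via $c(X)<c\big(L(p,q),K(\ell,m)\big)$ from Proposition~\ref{complement:prop} --- to the complement $X$, which by the proof of Proposition~\ref{irred:torus:prop} fibres as a Seifert manifold over the disc with two exceptional fibres of orders $|\ell|$ and $|pm-q\ell|$ whose complexity is computable in small cases (\emph{cf.}~\cite{MaPe:c9,MarPet}).

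The hard part is Stage 2. It literally contains the open case of Matveev's lens-space conjecture --- excluding short non-layered triangulations is no easier for $K(\ell,m)\subset L(p,q)$ than for $L(p,q)$ alone --- and the complexity lower bounds currently available grow only logarithmically in the relevant torsion, whereas $\lambda(\ell,m,p,q)-3$ can grow linearly in $p+q$. A complete proof will therefore very likely have to wait for, or develop alongside, progress on the complexity of lens spaces; meanwhile the feasible contributions are the upper bound (done above), the lower bound in those cases where the Seifert complement $X$ has exactly known Matveev complexity, and a direct check that the non-hyperbolic torus knots occurring in our census (Table~\ref{non:num:tab}) realize the predicted complexities, consistent with the data reported above.
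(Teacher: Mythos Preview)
The statement you are addressing is labelled \emph{Conjecture} in the paper, and the paper offers no proof of it. All the paper does is record the upper bound $c\big(L(p,q),K(\ell,m)\big)\leqslant\max\{\lambda(\ell,m,p,q)-3,0\}$ coming from the layered construction, state the reverse inequality as a conjecture extending Matveev's lens-space conjecture, and note that the census data in Table~\ref{non:knot:data:tab} confirm it for complexity at most~$2$. There is therefore no ``paper's own proof'' to compare your proposal against.

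Your write-up is a reasonable strategic discussion rather than a proof, and you say as much. A few comments. First, your reduction of the $\lambda\leqslant 3$ case to Proposition~\ref{c=0:irred:prop} is essentially vacuous: by the paper's definition a path of triangles has $k\geqslant 4$, so $\lambda(\ell,m,p,q)\geqslant 4$ always, and the content of the conjecture is entirely the lower bound $c\geqslant\lambda-3$. Second, your claim that the conjecture ``literally contains'' Matveev's lens-space conjecture is overstated: since $c\big(L(p,q)\big)\leqslant c\big(L(p,q),K(\ell,m)\big)$, a lower bound on the pair does not automatically yield one on the bare manifold, and the quantities $\lambda(\ell,m,p,q)$ and $\lambda(p,q)$ differ in general. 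What is true is that both lower bounds face the same structural obstacle --- ruling out short non-layered efficient triangulations --- and your Stage~2 correctly identifies this as the open heart of the matter. Third, your Stage~1 sketch (normalising a Heegaard torus and reading off a Farey path) is plausible in outline but already nontrivial: the $0$/$1$-efficiency machinery of~\cite{layered} is developed for triangulations of manifolds, not of pairs with a marked edge, and one would need to check that the marked edge $K(\ell,m)$ survives as a layering edge through the normalisation. In short: your proposal is an honest status report on an open problem, consistent with what the paper asserts and verifies, but it is not --- and does not claim to be --- a proof.
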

As the census in Table~\ref{non:knot:data:tab} shows, the conjecture holds for
complexity up to $2$.

\newpage
\subsection{Non-hyperbolic knots and links}
The non-hyperbolic knots and links up to complexity 2, and those
having complexity 3 contained in $S^3$, are described in
Table~\ref{non:knot:data:tab}. They are all torus links in lens
spaces, except for a knot in the elliptic Seifert space $S^3/{Q_8}$,
whose exterior is the twisted interval bundle over the Klein bottle.
This pair is pictured in Figure~\ref{Q8:knot:fig}.

Note that $L(7,2)$ is the only lens
space in the table not admitting a symmetry switching the two
cores of the Heegaard solid tori, and that both these cores appear
in the list.

\begin{table}[h]
\begin{center}
\begin{tabular}{c||c|c|l}
$c$ & type & space & description of knot or link \\ \hline\hline
0 & knot & $S^3$ & $K(1,0)=$ unknot\\ \hline
0 & knot & $\matP^3$ & $K(1,0)=$ core of Heegaard torus\\ \hline
0 & knot & $L(3,1)$ & $K(1,0)=$ core of Heegaard torus\\ \hline\hline

1 & knot & $S^3$ & $K(3,2)=$ trefoil\\ \hline
1 & link & $S^3$ & $K(2,2)=$ Hopf link\\ \hline
1 & knot & $L(4,1)$ & $K(1,0)=$ core of Heegaard torus\\ \hline
1 & knot & $L(5,2)$ & $K(1,0)=$ core of Heegaard torus\\ \hline\hline

2 & knot & $S^3$ & $K(5,2)= 5_1$~\cite{Rolfsen}\\ \hline 2 & knot
& $L(5,1)$ & $K(1,0)=$ core of Heegaard torus\\ \hline 2 & knot &
$L(7,2)$ & $K(1,0)=$ core of one Heegaard torus\\ \hline 2 & knot
& $L(7,2)$ & $K(3,1)=$ core of other Heegaard torus\\ \hline 2 &
knot & $L(8,3)$ & $K(1,0)=$ core of Heegaard torus\\ \hline 2 &
knot & $L(5,1)$ & $K(2,1)$ \\ \hline 2 & knot & $L(7,2)$ &
$K(2,1)$ \\ \hline 2 & knot & $L(8,3)$ & $K(2,1)$ \\ \hline 2 &
knot & $S^2\times S^1$ & $K(3,1)$ \\ \hline 2 & knot & $L(3,1)$ &
$K(3,2)$ \\ \hline 2 & knot & $\matP^3$ & $K(4,1)$ \\ \hline 2 &
link & $\matP^3$ & $K(2,2)=$ union of cores of Heegaard tori\\
\hline 2 & knot & $S^3/Q_8$ & singular fibre of
$\big(S^2;(2,-1),(2,1),(2,1)\big)$ \\ \hline\hline

3 & knot & $S^3$ & $K(4,3)=8_{19}$~\cite{Rolfsen} \\ \hline 3 &
knot & $S^3$ & $K(5,3)=10_{123}$~\cite{Rolfsen} \\ \hline 3 & knot
& $S^3$ & $K(7,2)=7_1$~\cite{Rolfsen}\\ \hline
3 & link & $S^3$ & $K(4,2)=4^2_1$~\cite{Rolfsen}\\
\end{tabular}
\end{center}
\mycap{\label{non:knot:data:tab}
\textbf{Information on non-hyperbolic knots and links.} In complexity 3 only knots
and links in the 3-sphere are described. In the description of torus knots, we set $S^3=L(1,0)$,
$\matP^3=L(2,1)$, and $S^2\times S^1=L(0,1)$.}
\end{table}

    \begin{figure}[h]
    \begin{center}
    \includegraphics[scale=0.7]{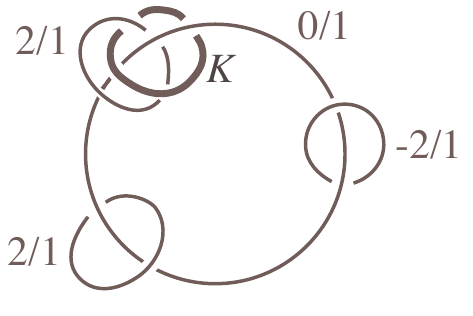}
    \mycap{\label{Q8:knot:fig} A surgery presentation of the pair $(M,K)$ where
    $M=S^3/Q_8=\big(S^2;(2,-1),(2,1),(2,1)\big)$ and $K$ is a singular fibre of the fibration. }
    \end{center}
    \end{figure}

\subsection{$\theta$-graphs with Klein bottles}
In complexity 5 we have only investigated pairs $(M,G)$ where $G$
is non-empty and all its components have vertices. As mentioned above,
we have found here 5 very interesting pairs, where $G$ is a $\theta$-graph
and the pair $(M,G)$ is $(0,1,2)$-irreducible,  but non-hyperbolic since $M \setminus G$
contains an embedded Klein bottle, so it is not atoroidal.

\begin{prop} \label{Klein:irred:prop}
There are five $(0,1,2)$-irreducible non-hyperbolic pairs $(M,G)$
such that $c(M,G)=5$ and $G$ has no knot component. They are described as follows:
\begin{itemize}
\item[(i)] Let $\matK$ be the twisted interval bundle over the Klein bottle.
\item[(ii)] Let $(\matT,\theta)$ be the solid torus with the embedded $\theta$-graph
shown in Figure~\ref{theta_in_torus}.
\item[(iii)]
Then $(M,G)$ is obtained by gluing $\matK$ to $(\matT,\theta)$ so
that $M$ is one of the manifolds $S^2\times S^1$, $S^3/{Q_8}$, $L(8,3)$,
$L(4,1)$, or $\mathbb {RP}^3\#\mathbb{RP}^3$.
\end{itemize}
\end{prop}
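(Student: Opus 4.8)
The plan is to establish the three asserted properties — $(0,1,2)$-irreducibility, non-hyperbolicity, and complexity exactly $5$ — more or less independently, then match the five gluings to the five ambient manifolds by a homology computation. First I would set up the gluing precisely: $\matK$ is the orientable twisted $I$-bundle over the Klein bottle, with $\bd\matK$ a single torus; a basis for $H_1(\bd\matK)$ is given by the fibre $\varphi$ (which maps to a torsion element, in fact $2\varphi=0$, in $H_1(\matK)$) and a curve $\sigma$ that maps to an infinite-order generator. On the other side, $(\matT,\theta)$ is the solid torus carrying the $\theta$-graph of Figure~\ref{theta_in_torus}, so $\matT$ is really the graph exterior $X'=\matT\setminus G$ glued back in; one reads off from that figure the torus boundary slope(s) of $X'$ together with the meridian slopes of the two edges of $\theta$. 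The manifold $M\setminus G$ is then $\matK\cup_\psi X'$ over their common torus, and the embedded Klein bottle is the zero-section of $\matK$; since it is one-sided with orientable tubular neighbourhood $\matK$, it is in particular incompressible in $\matK$, hence $M\setminus G$ contains an essential Klein bottle and so $X=$ graph exterior is not homotopically atoroidal (the boundary of a regular neighbourhood of the Klein bottle is an incompressible torus unless it compresses in $X'$, which one checks it does not). By Theorem~\ref{hyp_par_irred} this already rules out a hyperbolic structure with parabolic meridians, giving non-hyperbolicity.

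For $(0,1,2)$-irreducibility I would argue directly with spheres. Since $M\setminus G = \matK\cup X'$ and $\matK$ is irreducible with incompressible boundary, any essential sphere or any sphere meeting $G$ in $\le 2$ points can be isotoped to meet the splitting torus $\bd\matK$ transversely in circles; an innermost such circle bounds a disc on the sphere side, and incompressibility of $\bd\matK$ in $\matK$ (together with irreducibility of both pieces) lets one push it off, reducing the number of intersection circles. One is then reduced to checking $(0,1,2)$-irreducibility inside $(\matT,\theta)$ and inside $\matK$ separately. For $\matK$ this is automatic (it contains no sphere at all that doesn't bound a ball, and $G$ doesn't enter $\matK$); for $(\matT,\theta)$ it is a small explicit verification from Figure~\ref{theta_in_torus} that no $2$-sphere meets $\theta$ once, that $\theta$ is not contained in a ball, and that a twice-punctured sphere bounds a ball with an unknotted arc. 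So this step is bookkeeping, not a genuine obstacle.

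The harder half is pinning down the complexity as \emph{exactly} $5$. For the upper bound $c(M,G)\le 5$ I would exhibit an explicit efficient triangulation with five tetrahedra: triangulate $\matK$ with two tetrahedra (the standard layered triangulation of the twisted $I$-bundle over the Klein bottle, equivalently a $1$-vertex triangulation whose boundary torus carries two triangles), layer on enough tetrahedra realizing a path of triangles in the Farey tessellation to build $(\matT,\theta)$ as in Subsection~\ref{torus_knot_sect} and Figure~\ref{Mobius_triplet}, and arrange the counts so the total is five; then pass to the dual special spine. The lower bound $c(M,G)\ge 5$ is where the real work lies. One approach: since $(M,G)$ is $(0,1,2)$-irreducible, Theorem~\ref{good:min:spin:thm} says a minimal spine is special and dual to an efficient triangulation, so it suffices to show no efficient triangulation of $(M,G)$ has $\le 4$ tetrahedra — one can invoke the completed computer census (which by construction lists every $(0,1,2)$-irreducible graph of complexity $\le 4$, and these five are not on it) or argue by hand via Proposition~\ref{complement:prop} that $c(M\setminus G)\le c(M,G)-1$, combined with a lower bound on $c(M\setminus G)$ coming from the presence of the incompressible Klein bottle and the known complexities of the relevant small Seifert and lens spaces. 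I expect this lower-bound step — showing no four-tetrahedron efficient triangulation exists without simply citing the enumeration — to be the main obstacle, and in practice the clean way to close it is to appeal to the census already carried out in Section~\ref{enu:section}. Finally, to see that these are five \emph{distinct} pairs and that $M$ runs through exactly the listed manifolds, I would compute $H_1(M)$ (equivalently, identify the Dehn filling slope on $\bd\matK$ determined by the gluing $\psi$) for each of the finitely many admissible gluings and read off $S^2\times S^1$, $S^3/Q_8$, $L(8,3)$, $L(4,1)$, $\matP^3\#\matP^3$; distinctness of the pairs then follows either from distinctness of the $M$'s or, in any coincident case, from the Kojima-type combinatorial data of the toroidal decomposition.
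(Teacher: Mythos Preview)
Your overall architecture matches the paper's: the ``exactly five'' and the complexity value come from the computer enumeration (which you correctly flag as the only clean route), non-hyperbolicity comes from the embedded Klein bottle, and $(0,1,2)$-irreducibility comes from an innermost-disc argument across the splitting torus $\partial\matT$. The identification of the five ambient manifolds by the filling slope on $\partial\matK$ is also the natural thing to do.

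There is, however, a genuine gap in your irreducibility step. You claim that incompressibility of $\partial\matK$ in $\matK$ together with irreducibility of both pieces lets you isotope the sphere $S$ off $\partial\matT$. This works when the innermost disc $D\subset S$ lies in $\matK$, or when $D\subset\matT$ with $\partial D$ inessential on $\partial\matT$. But it fails when $D$ is a \emph{meridian disc} of $\matT$: such a disc cannot be pushed off $\partial\matT$, and you have given no reason why this case does not arise. The paper's argument closes exactly this hole with a counting observation you are missing: any meridian disc of $\matT$ must intersect the particular $\theta$-graph of Figure~\ref{theta_in_torus} in at least two points. Since a nonempty collection of disjoint circles on $S$ always cuts off at least two innermost discs, and (after removing inessential curves) each such innermost disc is a meridian disc of $\matT$, one gets $\#(S\cap G)\geqslant 4$, contradicting $\#(S\cap G)\leqslant 2$. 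Hence $S\cap\partial\matT=\emptyset$ after all, and one finishes by noting that $\matK$ is irreducible and that $(\matT,\theta)$ is $(0,1,2)$-irreducible --- the paper observes it is in fact hyperbolic with parabolic meridians, which is slicker than the case-by-case check you propose.

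So the fix is short but essential: replace your ``push off'' sentence with the observation that meridian discs of $\matT$ meet $\theta$ at least twice, and then count.
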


    \begin{figure}[h]
    \begin{center}
    \includegraphics[scale=0.9]{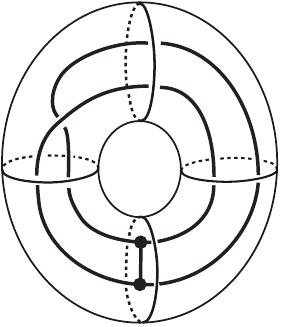}
    \mycap{\label{theta_in_torus} The theta graph $\theta$ in the solid torus $\matT$.}
    \end{center}
    \end{figure}

This result was proved as follows.
We first analyzed the triangulations of the $5$
pairs $(M,G)$ produced by our Haskell code on which \emph{Orb} failed to construct a hyperbolic
structure. This allowed us to show that the  $5$ pairs are those described in points (i)-(iii)
of the statement, whence to see that they are not hyperbolic.
We then proved that they are indeed (0,1,2)-irreducible by classical
topological techniques, the key point being that
a compressing disc of $(\matT,\theta)$ must intersect
$\theta$ in at least two points.

Here are the details of the argument. Suppose there is a sphere $S$
intersecting $G$ transversely
in at most $2$ points, and isotope $S$ to
minimize its intersection with $\partial\matT$.
Now consider an innermost disc $D$ on $S$
bounded by a simple closed curve in $S \cap\partial\matT$. Since there is no
compressing disc in $\matK$, such a disc must be a
compressing disc in $\matT$, so it must intersect $\theta$
at least twice. But if $S \cap \partial\matT \ne \emptyset$ then there are at least
two innermost discs on $S$, whence $S\cap G$ contains at least 4
points, which is impossible. This shows that $S$ is disjoint from $\partial\matT$,
so it is contained either in $\matK$ or in $\matT$. However $\matK$ is irreducible,
and $(\matT,\theta)$ is $(0,1,2)$-irreducible (in
fact, it is easy to see that it is hyperbolic with parabolic
meridians). Therefore $S$ must bound a trivial ball in $(M,G)$.

\section{Figures}\label{fig:section}
This section contains pictures of the hyperbolic graphs up to
complexity 4, given in the form of a surgery description when the
underlying space is not $S^3$. For each graph, we give the name
and the volume of the hyperbolic structure with parabolic
meridians.

The figures were produced using \emph{Orb}~\cite{orb} and the census of
knotted graphs in~\cite{CHHSS}.  Most of the graphs in $S^3$
occurred in~\cite{CHHSS}; the graphs not in $S^3$ generally arose
as Dehn surgeries on knot components of disconnected graphs
in~\cite{CHHSS}.  There were a couple of remaining examples which
were constructed by hand. In all cases, we used \emph{Orb} to  identify
the graphs by matching triangulations.

    \begin{figure}
    \begin{center}
    \includegraphics[scale=1.0]{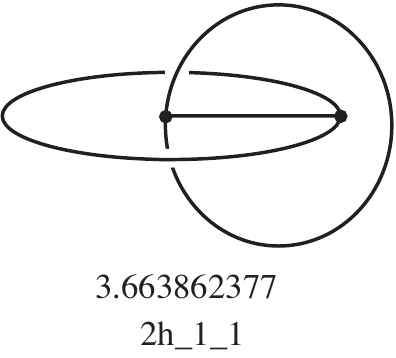}
    \mycap{\label{c=1:fig} Complexity 1.}
    \end{center}
    \end{figure}

     \begin{figure}
    \begin{center}
    \includegraphics[scale=1.0]{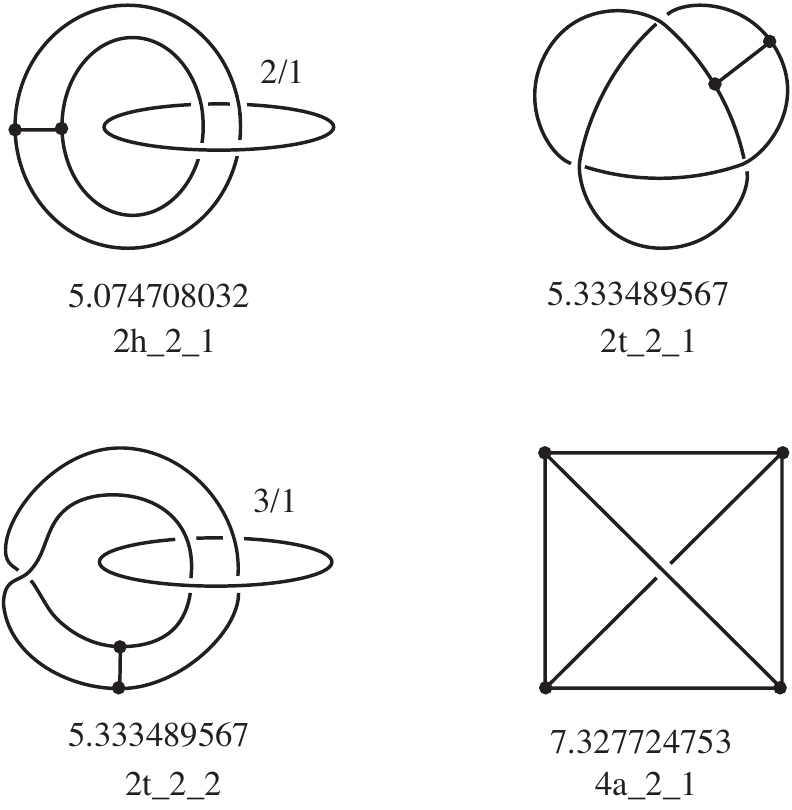}
    \mycap{\label{c=2:fig} Complexity 2.}
    \end{center}
    \end{figure}

\ 
\vfill\eject

    \begin{figure}
    \begin{center}
    \includegraphics[scale=1.0]{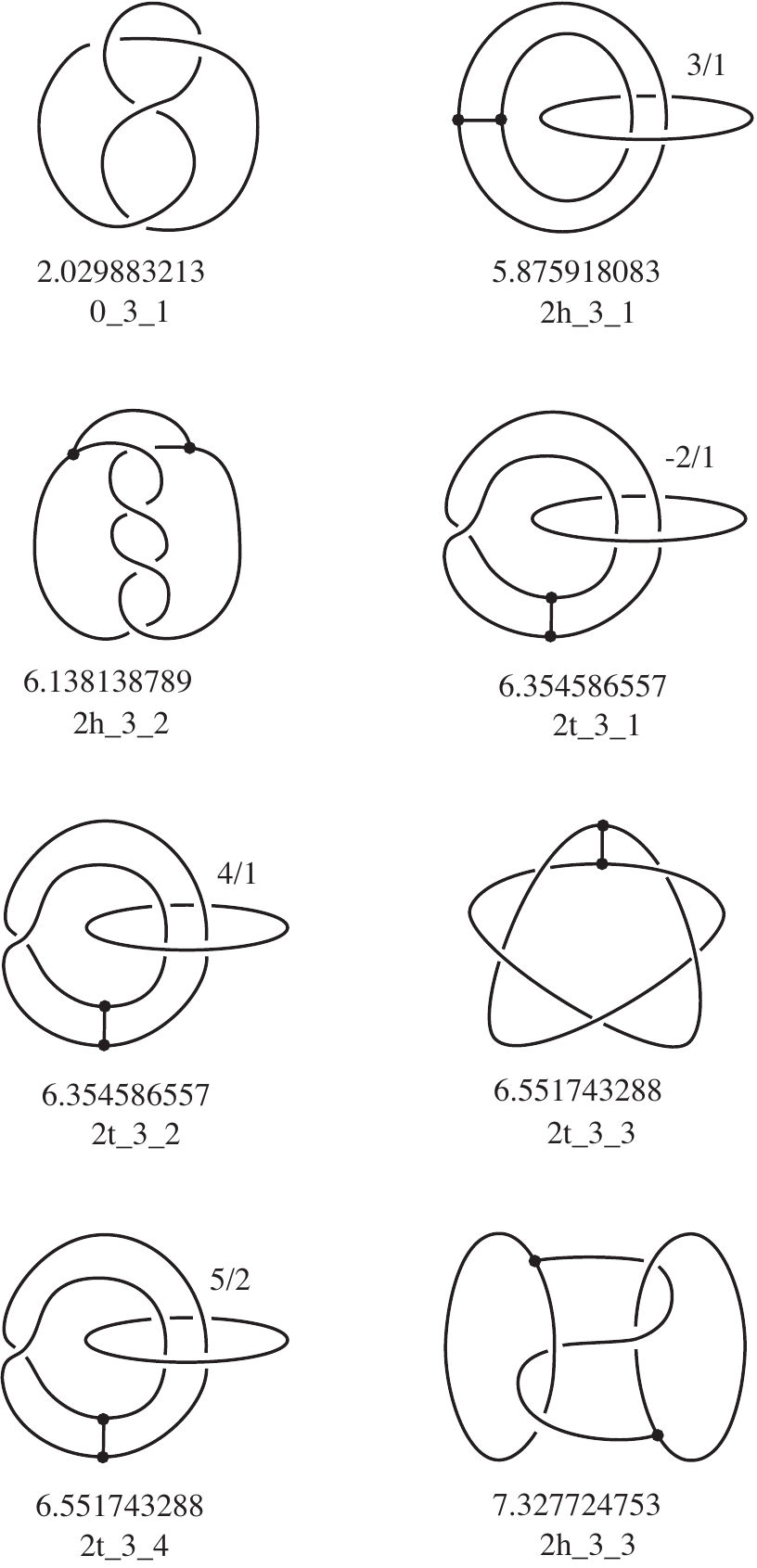}
    \mycap{\label{c=3:fig} Complexity 3.}
    \end{center}
    \end{figure}
    
\ 
\vfill\eject

    \begin{figure}
    \begin{center}
    \includegraphics[scale=1.0]{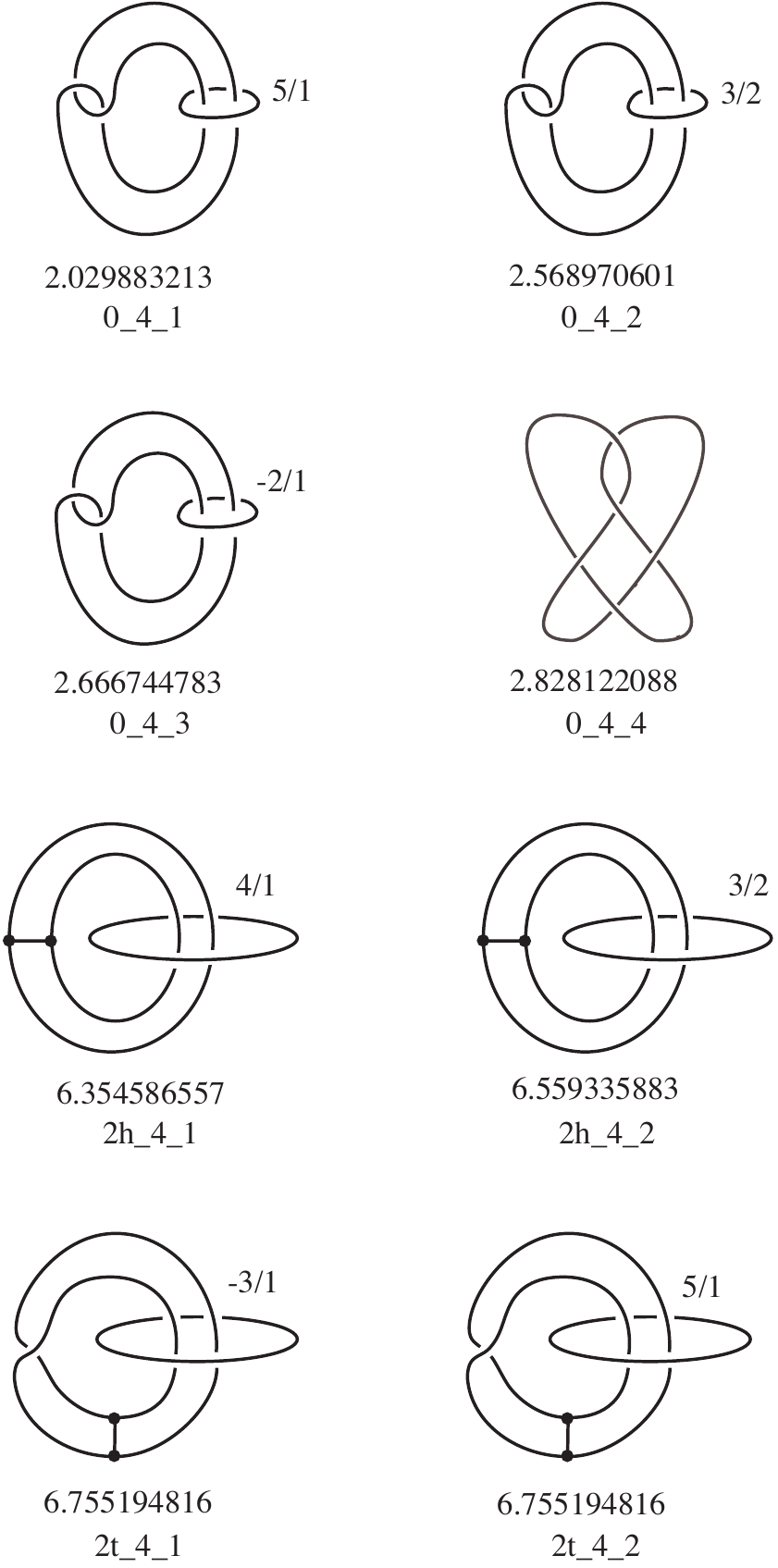}
    \mycap{\label{c=4:fig1} Complexity 4, part 1 of 4.}
    \end{center}
    \end{figure}

\ 
\vfill\eject

    \begin{figure}
    \begin{center}
    \includegraphics[scale=1.0]{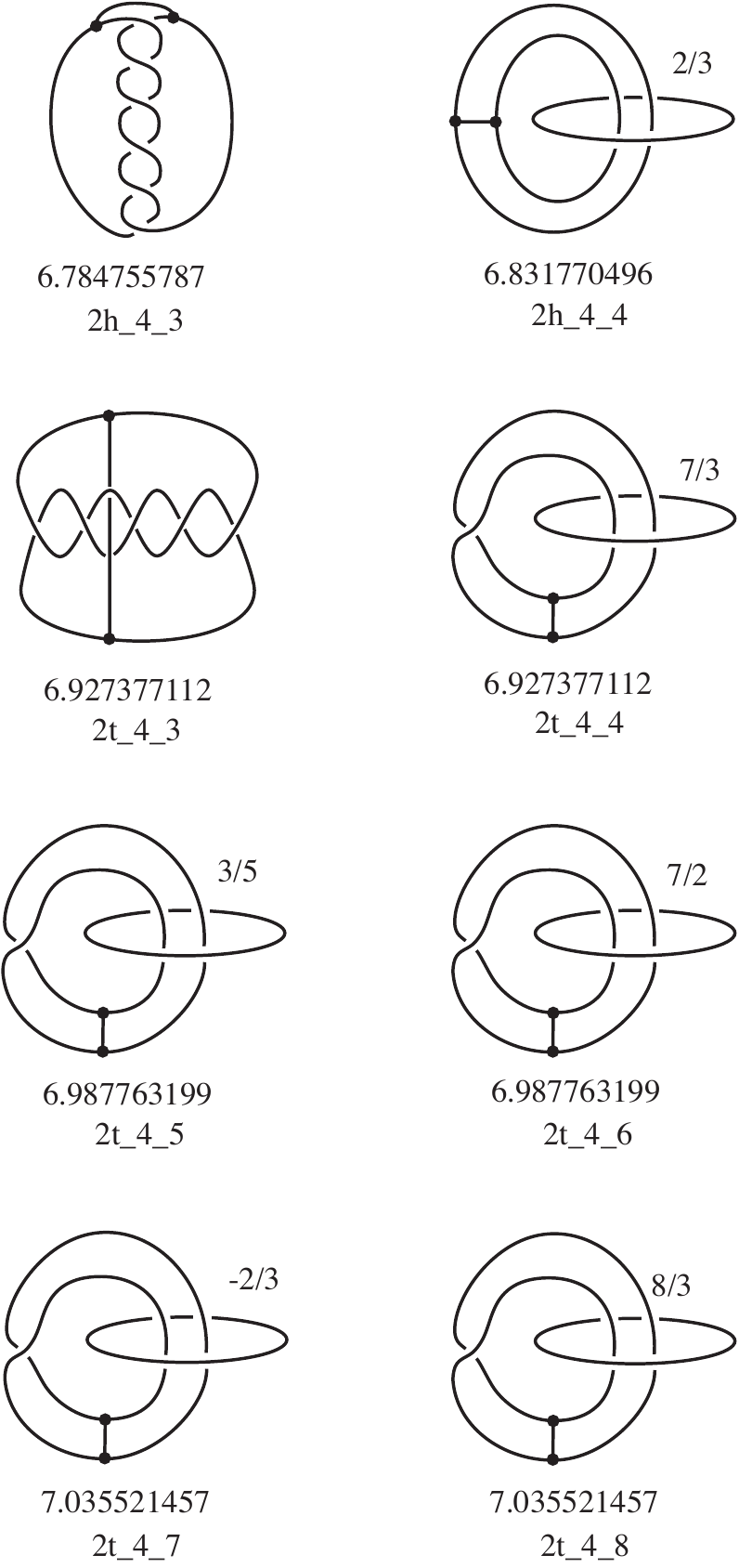}
    \mycap{\label{c=4:fig2} Complexity 4, part 2 of 4.}
    \end{center}
    \end{figure}

\ 
\vfill\eject

    \begin{figure}
    \begin{center}
    \includegraphics[scale=1.0]{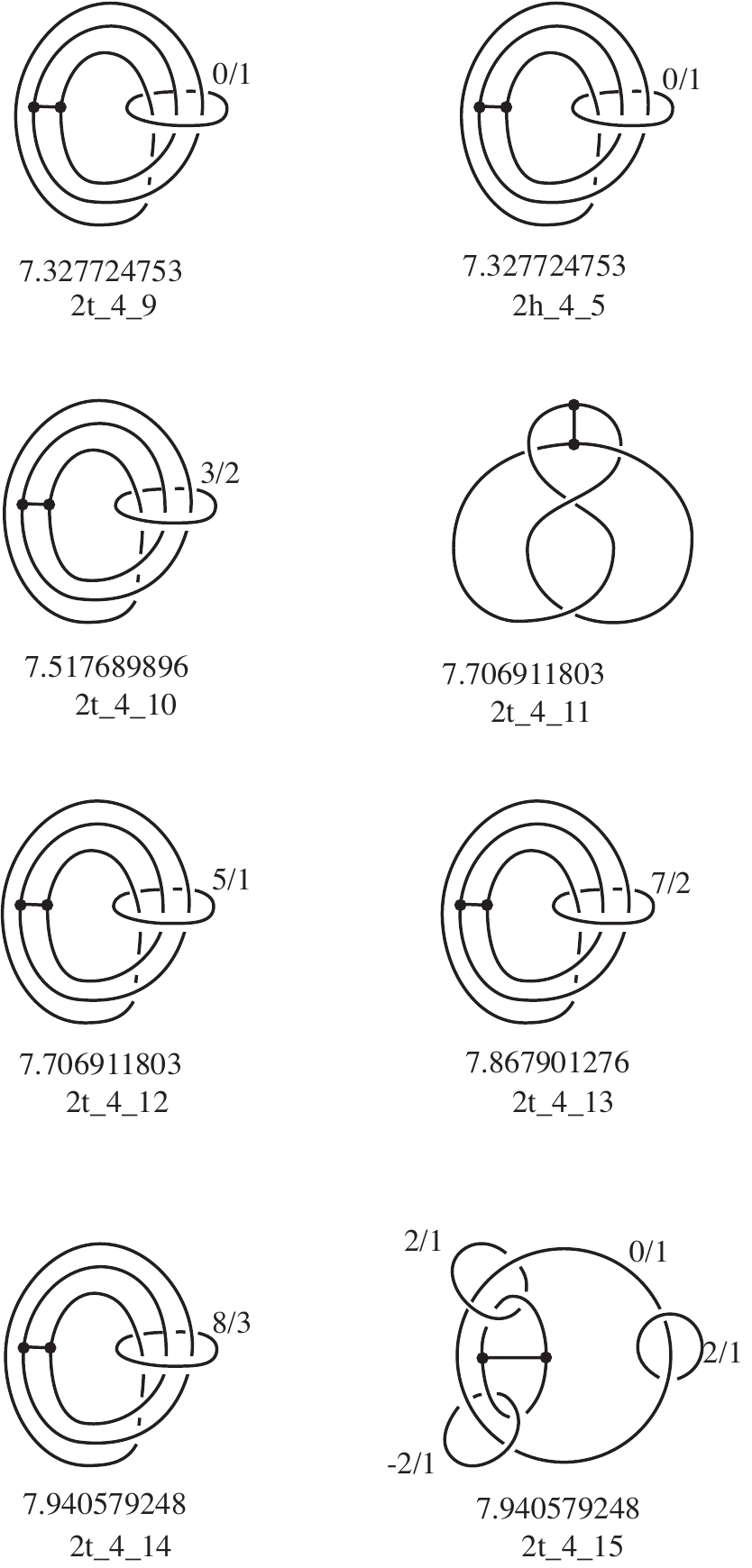}
    \mycap{\label{c=4:fig3} Complexity 4, part 3 of 4.}
    \end{center}
    \end{figure}

\ 
\vfill\eject

    \begin{figure}
    \begin{center}
    \includegraphics[scale=1.0]{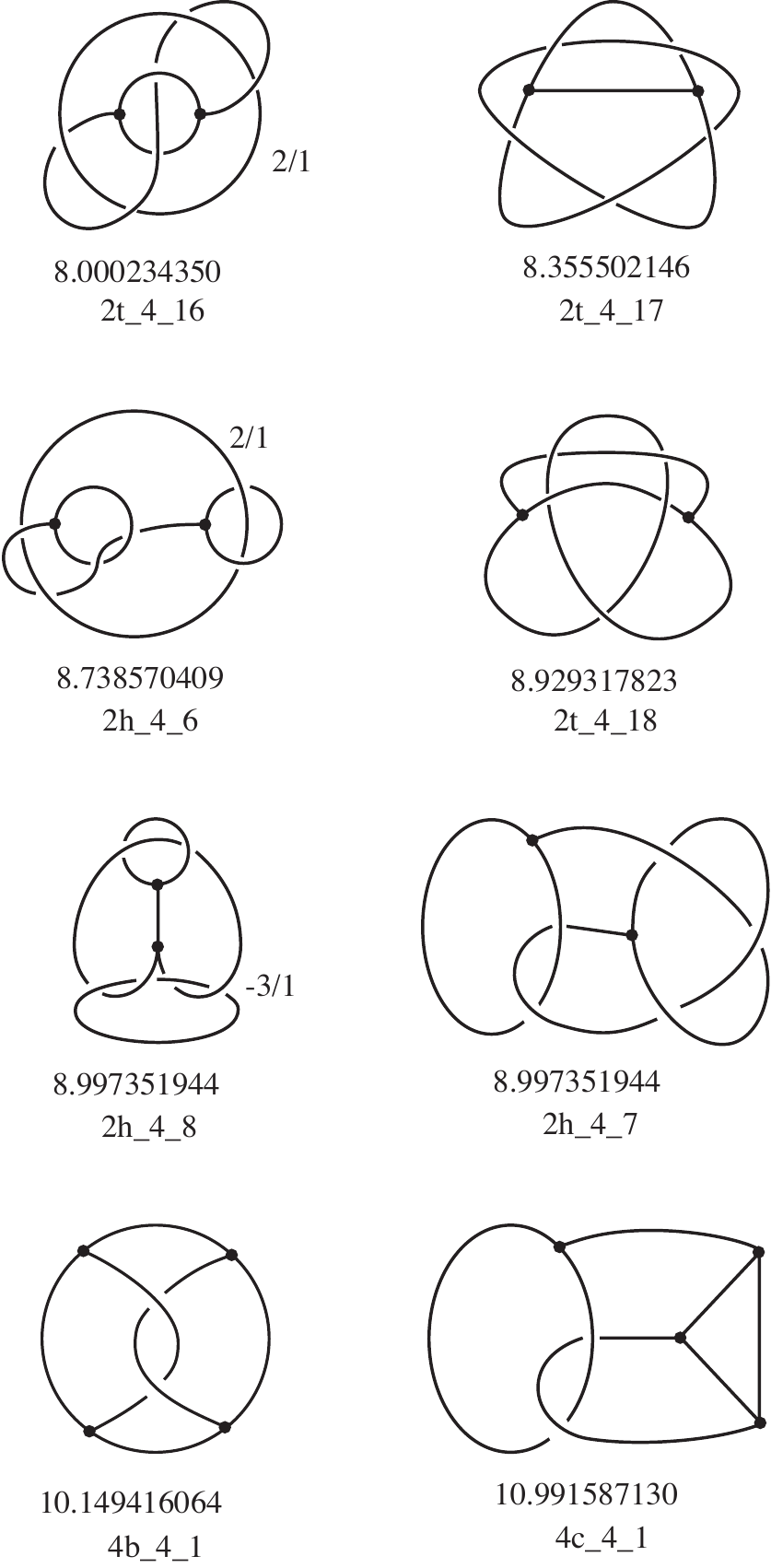}
    \mycap{\label{c=4:fig4} Complexity 4, part 4 of 4.}
    \end{center}
    \end{figure}

\ 
\vfill\eject

\ 
\vfill\eject

\end{document}